\numberwithin{equation}{section}
\def\Hkk{\mathrm{H}^{k}}   
\def\Hk1{\mathrm{H}^{k+1}}
\def\HH1{\mathrm{H}^1}
\def\Hexpo{\mathrm{H}^}
\def\L2{\mathrm{L}^2}  
\def\LL{\mathrm{L}^}  
\def\Linf{\mathrm{L}^\infty} 
\def\H2{\mathrm{H}^2}
\def\c{\mathcal{C}^} 
\def\P{\mathbb{P}^}
\def\omgam{(\Omega,\Gamma)}
\def\Vh{\mathbb{V}_h}
\def\Vhlifte{\Vh^\ell}
\def\d{\mathrm{d}}
\def\nn{\boldsymbol{\mathrm{n}}}
\def\nt{\nabla_\Gamma}
\def\na{\nabla}
\def\div{\mathrm{div}}
\def\dist{\mathrm{dist}}
\def\diff{\mathrm{D}}
\def\ds{\mathrm{ds}}
\def\omhh{\Omega_h}
\def\omh1{\Omega_h^{(1)}}
\def\omhr{\Omega_h^{(r)}}
\def\ghh{\Gamma_h}
\def\gh1{\Gamma_h^{(1)}}
\def\ghr{\Gamma_h^{(r)}}
\def\ft{F_T}
\def\fte{F_T^{(e)}}
\def\ftr{F_T^{(r)}}
\def\ftre{F_{T^{(r)}}^{(e)}}
\def\Ghr{G_h^{(r)}}
\def\G{\mathcal{G}_h^{(r)}} 
\def\Ahell{A_h^\ell}
\def\ahell{a_h^\ell}
\def\mhell{m_h^\ell}
\def\Jb{J_b}
\def\Jh{J_h}
\def\Jblifte{J_b^\ell}
\def\Jhlifte{J_h^\ell}
\def\tref{\hat{T}}
\def\trefminissigma{\tref\backslash\hat{\sigma}}
\def\hatsigma{\hat{\sigma}}
\def\te{{T}^{(e)}}
\def\tr{{T}^{(r)}}
\def\tauh{\mathcal{T}_h^{(1)}}
\def\taur{\mathcal{T}_h^{(r)}}
\def\taue{\mathcal{T}_h^{(e)}}
\def\lambdaetoile{\lambda^*}
\def\hatx{\hat{x}}
\def\haty{\hat{y}}
\def\hatv{\hat{v}}
\newcommand{\fonction}[5]{\begin{array}[t]{lrcl}#1 :&#2 &\longrightarrow &#3\\&#4& \longmapsto &#5 \end{array}}
\newtheorem{coro}{Corollary}
\newtheorem{proposition}{Proposition}
\newtheorem{definition}{Definition}
\newtheorem{remarque}{Remark}
\newtheorem{theoreme}{Theorem }
\newtheorem{lem}{Lemma}
\numberwithin{proposition}{section}
\numberwithin{definition}{section}
\numberwithin{coro}{section}
\numberwithin{Propriete}{section}
\numberwithin{remarque}{section}
\numberwithin{ex}{section}
\numberwithin{theoreme}{section}
\numberwithin{lem}{section}
\numberwithin{notation}{section}
\numberwithin{hypothese}{section}
 \def\projG{\Pi_h}
\def\ProgZj{\mathcal{P}_{\ahell}}
\def\ProjHj{\mathcal{P}_{\mhell}}
\def\projPj{\mathcal{P}_{m}}
\def\F{\mathbb{E}_{\lbdi}}
\def\Fh{\mathbb{F}_h} 
\def\Fhlift{\mathbb{F}_h^\ell}
\def\Shlift{\mathbb{S}_h^\ell}
\def\Ehj{\mathbb{E}_{\lbdhj}} 
\def\Ehjlift{\Ehj^\ell}
\def\eL2{e_{\L2}}
\def\eH1{e_{\HH1_0}}
\def\elbd{e_{\lambda_6}}
\def\elbdd{e_{\lambda_{10}}}
\def\uj{u_j}
\def\up{u_p}
\def\uhj{U_j}
\def\uhjlifte{U_j^\ell}
\def\uhp{U_p}
\def\uhplifte{\uhp^\ell}
\def\lbdi{\lambda_i}
\def\lbdj{\lambda_j}
\def\lbdhj{\Lambda_j}
\def\lbdhp{\Lambda_p}
\def\lbdtildej{\tilde{\Lambda}_j}
\def\lbdtildep{\tilde{\Lambda}_p}
\def\ci{c_{\lbdi}}
\def\Ji{\mathrm{J}}
\def\J{\mathrm{J}}
\def\muJ{\mu_J}
\def\R{\mathbb{R}}      
\def\N{\mathbb{N}}  
\def\I{\mathrm{Id}} 
\def\transpose{^\mathsf{T}}
\def\Ih1{\mathcal{I}^{(1)}}
\def\Ihr{\mathcal{I}^{(r)}}
\def\Ihlifte{\mathcal{I}^\ell}
\title{Finite element analysis of a spectral problem\\on curved meshes occurring in diffusion\\with high order boundary conditions}
\author{Fabien Caubet\footnotemark[1]\footnote{Universit\'e de Pau et des Pays de l'Adour, E2S UPPA, CNRS, LMAP, UMR 5142, 64000 Pau, France. \texttt{fabien.caubet@univ-pau.fr}, \texttt{joyce.ghantous@univ-pau.fr}, \texttt{charles.pierre@univ-pau.fr}}, 
Joyce Ghantous\footnotemark[1], 
 Charles Pierre\footnotemark[1]
}
\begin{document}
\maketitle

\begin{abstract}
In this work is considered a spectral problem, involving a second order term on the domain boundary: the Laplace-Beltrami operator. A variational formulation is presented, leading to a finite element discretization. For the Laplace-Beltrami operator to make sense on the boundary, the domain is smooth: consequently the computational domain (classically a polygonal domain) will not match the physical one. Thus, the physical domain is discretized using high order curved meshes so as to reduce the \textit{geometric error}. The \textit{lift operator}, which is aimed to transform a function defined on the mesh domain into a function defined on the physical one, is recalled. This \textit{lift} is a key ingredient in estimating errors on eigenvalues and eigenfunctions. A bootstrap method is used to prove the error estimates, which are expressed both in terms of \textit{finite element approximation error} and of \textit{geometric error}, respectively associated to the finite element degree $k\ge 1$ and to the mesh order~$r\ge 1$. Numerical experiments are led on various smooth domains in 2D and 3D, which allow us to validate the presented theoretical results.
\end{abstract}

\textbf{Key words:} Laplace-Beltrami operator, A priori error estimates, Ventcel boundary conditions, Spectral problem, Finite element analysis, Curved mesh, Geometric error.

\medskip

\textbf{AMS subject classification:} 65G99; 65N25; 65N15; 65N30; 35B45.

\section{Introduction}
\paragraph{Motivation ad objective.} 
This work is part of a research program on the study of vibrating properties of mechanical parts submitted to intense and varying rotating regimes, more specifically when these parts are surrounded by thin surface layers (either specific industrial treatments or corrosion) that may impact or alter their mechanical properties.
The general objective is to use shape optimization to better understand such mechanical parts and improve their design.
This work is part of the RODAM research project\footnote{\textit{Robust Optimal Design under Additive Manufacturing constraints}: \url{https://lma-umr5142.univ-pau.fr/en/scientific-activities/scientific-challenges/rodam.html}}.

\medskip

From the numerical point of view, taking into account thin layers around mechanical parts induces specific difficulties, in particular when discretizing the domain with a mesh size adapted to the thin layer.
To overcome this issue, the thin layer is modeled by adapted boundary conditions involving second order terms such as the Laplace-Beltrami operator. 
These conditions derive from the pioneering works of Ventcel in \cite{Ventcel-1956,Ventcel-1959}. 
For the second order boundary terms to make sense, the domain is assumed to be smooth.
Thus, we have to deal with problems where the physical domain and the mesh domain differ, putting forward an intrinsic geometric error. 

\medskip

Towards this general objective of optimizing spectral properties for the elastic behavior of mechanical parts surrounded by specific thin layers, the present paper addresses the numerical resolution of the direct problem. The original problem is first simplified considering in a first step a scalar diffusion problem instead of the vector linear elasticity framework. Our aim is to analyze the numerical errors when solving the spectral problems both when considering the error induced by the numerical method and the geometric error caused when discretizing the domain. 

\medskip

This paper thus is devoted to the numerical analysis of a spectral elliptic problem equipped with a non classical boundary condition involving a high order tangential operator, here the Laplace-Beltrami operator: 
the so-called \textit{Ventcel boundary condition} (we refer, e.g., to \cite{ref-ventcel} for a general derivation of these boundary conditions). 




\paragraph{The Ventcel eigenvalue problem.} 
Let  $\Omega$ be a nonempty bounded connected smooth domain of~$\R^{d}$, $d=2, 3$, with a smooth boundary $\Gamma := \partial\Omega$. Motivated by generalized impedance boundary conditions, we consider the following spectral problem with Ventcel boundary conditions,
\begin{equation}
\label{sys-eigenval-ventcel}
\arraycolsep=2pt
    \left\lbrace
\begin{array}{rcll}
-\Delta u &=& \lambda u    & \text{ in } \Omega, \\
 - \Delta_{\Gamma} u + \partial_{\mathrm{n}} u + u &=&0 & \text{ on } \Gamma,
\end{array}
\right.
\end{equation}
where $\partial_{\mathrm{n}} u$ is the normal derivative of $u$ along~$\Gamma$ and~$\Delta_\Gamma$ the Laplace-Beltrami operator (see below for a reminder of the definition). The operator associated to this problem is a self-adjoint positive-definite operator. Consequently, the spectrum of Problem \eqref{sys-eigenval-ventcel} consists of an increasing sequence of positive eigenvalues tending to infinity. For each eigenvalue there exists a finite number of associated eigenfunctions defined on $\Omega$. In \cite{ventcel1}, 
the authors delivered a thorough study of the well-posedness of the Ventcel problem with source terms and the regularity of its solution.

\medskip

As previously mentioned, the domain $\Omega$ is required to be smooth due to the presence of second order boundary conditions. 
Therefore the physical domain $\Omega$ 
cannot match the mesh domain $\Omega_h$ inducing an intrinsic geometric error. This error is larger when using classical affine meshes made of triangles in 2D and tetrahedrals in 3D. Indeed, these polygonal meshes will induce a saturation of the numerical error at a low order: when resorting to accurate finite element methods the geometric error will dominate. To improve the error rate, we will resort to curved meshes whose elements have polynomial degree $r \ge 1$ so as to have a geometric error with a better asymptotic regime with respect to the mesh size. \medskip

A technical difficulty arises: the mesh domain of order~$r$, denoted~$\omhr$, does not fit exactly on~$\Omega$. A $\P k$-Lagrangian finite element method is used with a degree~$k \ge 1$ to approximate the exact solutions of System~\eqref{sys-eigenval-ventcel}. In order to estimate the error between the discrete eigenfunctions defined on $\omhr$ and the exact ones defined on $\Omega$, a \textit{lift operator} is required. Throughout the years, many authors defined their version of the \textit{lift functional}, like in~\cite{dubois,Lenoir1986,nedelec,D1,D2,Dz88}. Among them, Dubois defined a lift based on the orthogonal projection onto the domain boundary $\Gamma$ in~\cite{dubois}. The idea of relying on the orthogonal projection in the lift definition was reintroduced by Dzuik in \cite{Dz88} in order to define a surface lift. This was generalised in the case of \textit{lifting} a function from higher order surface meshes onto a continuous surface in~\cite{D1} by Demlow. In the context of curved meshes with order $r\ge 2$, the definition of a volume lift required a higher regularity: such an improvement was brought by Elliott \textit{et al.} in~\cite{elliott}, with a definition that also relies on the orthogonal projection. However, their definition of the volume lift did not fit the orthogonal projection on the computational domain's boundary, as we highlighted in~\cite{art-joyce-1}. It is natural to expect the volume lift to fit the orthogonal projection on the mesh boundary: such a property is crucial for the derivation of \textit{a priori} error estimates in~\cite{art-joyce-1} and in the present paper.
Thus, we recently formulated an alternative definition of a volume lift in~\cite{art-joyce-1} to satisfy that property together with all the necessary regularity properties, which will be adopted and recalled in this paper.

\paragraph{State of the art and main results.} 
In 2013, Elliott and Ranner in \cite{elliott} 
made a numerical analysis of a bulk problem 
with a Ventcel boundary condition on curved meshes 
with iso-parametric finite elements. 
More recently the  approximation of the Ventcel problem with source terms has been studied in  \cite{ed,art-joyce-1} with curved meshes and high order finite elements. In \cite{elliott,ed}, the same lift definition was considered fulfilling its role in enabling the authors to estimate the error on a smooth domain while using an isoparametric approach. In \cite{art-joyce-1,Jaca}, a non isoparametric approach is led while distinguishing between the mesh order $r$ and the degree of the finite element method~$k$, using a different definition of the lift operator as previously discussed. This lift definition is considered in this work and recalled in Section~\ref{sec:mesh}. 

\medskip

Meanwhile, in 2018, error estimates for spectral problem on curved meshes have been carried out in \cite{D4} by Bonito \textit{et al.}for the surface Laplacian. The ideas of~\cite{D4} are adapted and extended in the present paper in the case of a volume spectral problem. The main novelties is the use of the new lift operator defined in~\cite{art-joyce-1} to estimate the eigenvalue and eigenfunction error both in terms of finite element approximation error and of geometric error, respectively, associated to the finite element degree~$k \ge 1$ and to the mesh order~$r \ge 1$. To the authors' knowledge, no error analysis was made on a bulk spectral problem having Ventcel type conditions. Let us also emphasize that the theoretical study and the numerical resolution of this spectral problem involve non-trivial difficulties compared with the analysis of the direct problem we made in~\cite{art-joyce-1}.

\medskip

The main result of this paper can be summarized as follows (see Theorem~\ref{th-error-bound-eigenfunction} for a precise statement). Let $\lbdi$ be an eigenvalue of multiplicity~$N$ with its corresponding eigenfunctions,~$\{ \uj \}_{j \in \Ji}$, where $\Ji:=\{i,...,i+N-1\}$, relatively to Problem~\eqref{sys-eigenval-ventcel}. Then, there exists a mesh independent constant $\ci > 0$, such that, for any~$j\in \J$,
\begin{eqnarray*}
    |\lbdj- \lbdhj  | &\le& \ci ( h^{2k} + h^{r+1}),\\[0.15cm]
    \inf_{U \in \Fhlift}\|\uj -U \ \|_{\L2 (\Omega)}  &\le& \ci ( h^{k+1} + h^{r+1/2}), \\
    \inf_{U\in \Fhlift} \|\uj - U \|_{\HH1\omgam } &\le& \ci ( h^{k} + h^{r+1/2}),
\end{eqnarray*}
where $\lbdhj$ is the  eigenvalue of the discretization 
of \eqref{sys-eigenval-ventcel}
of rank $j\in \Ji$,
$\Fh$ is the space generated by the discrete eigenfunctions associated to $\{\lbdhj\}_{j \in \J}$, $\Fhlift$ is the lift of $\Fh$  made of functions defined on the physical domain $\Omega$ and where $h$ is the mesh size. The Hilbert spaces~$\HH1 \omgam$, precisely defined below, is made of the functions in 
${\rm H}^1(\Omega)$ having their traces in ${\rm H}^1(\Gamma)$. 

\medskip

A \textit{bootstrap method} is used to prove these error estimates. To sum up the main ideas of the proof, a preliminary estimation of the eigenvalue error is needed in order to estimate the eigenfunction error in the $\L2(\Omega)$ and $\HH1\omgam$ norms using orthogonal projections over the space~$\Fhlift$. Lastly, we are able to obtain the adequate eigenvalue error estimate with respect to the finite element degree~$k$ and the geometric order of the mesh~$r$ using the obtained estimates on the eigenfunctions. 

\medskip

We validate these estimations in several numerical experiments presented in two and three dimensions. We noticed a \textit{super-convergence} of the error rate on the quadratic meshes, which is much better than expected, as it was also depicted in~\cite{D4}. In an attempt to understand the origin of this phenomena, numerical experiments are led on a non-symmetric, non convex domain and also on classical domains like the unit disk and the unit ball. However, the same asymptotic regime of the errors is observed on these various smooth domains.

\paragraph{Paper organization.} Section \ref{sec:notations_def} contains all the mathematical tools and useful definitions to derive the weak formulation of System~\eqref{sys-eigenval-ventcel}. Section~\ref{sec:mesh} is devoted to the definition of the high order meshes and the lift operator with some of its most essential properties. A Lagrangian finite element space and a discrete formulation of System~\eqref{sys-eigenval-ventcel} are presented in Section~\ref{sec:FEM}, alongside their \textit{lifted forms} onto $\Omega$. In Section~\ref{sec:error-estimation} is stated the main result and are detailed the proofs of the eigenvalue and eigenfunction estimates. The paper wraps up in Section~\ref{sec:numerical-ex} with numerical experiments studying the convergence rates of the eigenvalue and eigenfunction errors  over various domains in 2D and 3D.

\section{The continuous problem}
\label{sec:notations_def}
\paragraph{Needed mathematical tools.}
%
%
%
%
%
%
%
%
%
Firstly, let us introduce the notations that we adopt in this paper. Throughout this paper, $\Omega$ is a nonempty bounded connected open subset of $\R^{d}$ $(d=2,3)$ with a smooth (at least $\c2$) boundary~$\Gamma:=\partial{\Omega}$. 
The unit normal to~$\Gamma$ pointing outwards is denoted by~$\nn$ and $\partial_{\mathrm{n}} u$ is the normal derivative of a function~$u$.
We denote respectively by $\LL 2(\Omega)$ 
and  $\LL 2(\Gamma)$ 
the usual Lebesgue spaces endowed with their standard norms on $\Omega$ and $\Gamma$.
Moreover, for~$k \geq 1$,~$ {\Hkk(\Omega)}$ denotes the usual Sobolev space endowed with its standard norm. We also consider the Sobolev spaces  {$\Hkk(\Gamma)$} on the boundary as defined e.g. in \cite[\S 2.3]{ventcel1}. It is recalled that the norm on $\Hexpo{1}(\Gamma)$ is 
:~$\|u\|^2_{\Hexpo{1}(\Gamma)} : =  \|u\|^2_{\L2(\Gamma)} + \|\nt u\|^2_{\L2(\Gamma)},$ where $\nt$ is the tangential gradient defined below; and that $ {\|u\|^2_{\Hkk(\Gamma)} :=  \|u\|^2_{\Hexpo{k-1}(\Gamma)} + \|\nt u\|^2_{\Hexpo{k-1}(\Gamma)}}$. 
Throughout this work, we rely on the following Hilbert space (see \cite{ventcel1}) 
$$
    \Hexpo{1}\omgam := \{ u \in \Hexpo{1}(\Omega), \ u_{|_\Gamma} \in \Hexpo{1}(\Gamma) \},
$$
equipped with the norm $\|u\|^2_{\Hexpo{1}\omgam} : =  \|u\|^2_{\Hexpo{1}(\Omega)} + \| u\|^2_{\Hexpo{1}(\Gamma)}.$
In a similar way is defined the following space 
$\L2 \omgam := \{ u \in \L2(\Omega), \ u_{|_\Gamma} \in \L2(\Gamma) \},$
equipped with the norm~$\|u\|^2_{\L2\omgam} :=  \|u\|^2_{\L2(\Omega)} + \| u\|^2_{\L2(\Gamma)}$. 
 More generally, we define~$ {\Hkk \omgam:=\{ u \in \Hexpo{k}(\Omega), \ u_{|_\Gamma} \in \Hexpo{k}(\Gamma) \}}$.  

\medskip

Secondly, to understand more the so-called Ventcel boundary conditions, we recall the definition of the Laplace-Beltrami operator (see~\cite{livreopt}): 
the {\it Laplace-Beltrami} operator of $u \in \H2 (\Gamma)$ is given by,
\begin{equation*}
    \Delta_\Gamma u  := \div_\Gamma (\nt u),
\end{equation*}
where, 
\begin{itemize}
\item the {\it tangential gradient} of 
  $u$ is given by $\nt u :=\na \tilde{u}  - (\na \tilde{w} \cdot \nn)\nn$, with~$\tilde{u} \in \HH1(\R^d)$ being any extension of $u$;
\item the {\it tangential divergence} of $W\in\HH1(\Gamma,\R^d)$ is  
  $\div_{\Gamma} W : = 
  \div \tilde{W}- (\mathrm{D}\tilde{W}\, \nn)\cdot \nn$, 
  where~$\tilde{W} \in \HH1(\R^d, \mathbb{R}^d)$ 
  is any extension of $W$ and $\mathrm{D}\tilde{W}$ is the differential of~$\tilde{W}$;
\end{itemize}

Finally, the construction of the mesh used in Section \ref{sec:mesh} is based on the following fundamental result that may be found in \cite{tubneig} and \cite[\S 14.6]{GT98}. For more details on the geometrical properties of the tubular neighborhood and the orthogonal projection defined below, we refer to \cite{D1,D2,actanum}.
%
%
%
%
%
%
%
%
\begin{proposition}
\label{tub_neigh_orth_proj_prop}
Let $\Omega$ be a nonempty bounded connected open subset of $\R^{d}$ $(d~=~2,3)$ with a~$\c2$ boundary $\Gamma= \partial \Omega$. Let $\d : \R^d \to \R$ be the signed distance function with respect to $\Gamma$ defined by,
\begin{equation*}
  \d(x) :=
  \left\lbrace
    \begin{array}{ll}
      -\dist(x, \Gamma)&  {\rm if } \, x \in \Omega ,
      \\
      0&  {\rm if } \, x \in \Gamma ,
      \\
      \dist(x, \Gamma)&  {\rm otherwise},
    \end{array}
  \right. \qquad 
  {\rm with} \quad \dist(x, \Gamma) := \inf \{|x-y|,~ \ y \in \Gamma \}.
\end{equation*}
Then there exists a tubular neighborhood $\mathcal{U}_{\Gamma}:= \{ x \in \R^d ; |\d(x)| < \delta_\Gamma \}$ of $\Gamma$, of sufficiently small width $\delta_\Gamma$, where {$\d$ is a $\c2$ function}. Its gradient 
$\na \d$ is an extension of the external unit normal $\nn$ to $\Gamma$. Additionally, in this neighborhood~$\mathcal{U}_{\Gamma}$, the orthogonal projection~$b$ onto $\Gamma$ is uniquely defined
and given by,
\begin{displaymath} 
b\, :~ x \in \mathcal{U}_{\Gamma}  
\longmapsto    b(x):=x-\d(x)\na \d(x) \in \Gamma.
\end{displaymath}
\end{proposition}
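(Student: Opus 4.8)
The plan is to deduce everything from the study of the \emph{normal map}
\[
\Phi : \Gamma \times \R \longrightarrow \R^d, \qquad \Phi(y,t) := y + t\, \nn(y),
\]
together with the inverse function theorem. Since $\Gamma$ is of class $\c2$, its Gauss map $y \mapsto \nn(y)$ is of class $\c1$, so $\Phi$ is $\c1$. At any point $(y,0)$ the differential $\mathrm{D}\Phi(y,0)$ restricts to the identity on the tangent space $T_y\Gamma$ and sends $\partial_t$ to the transverse unit vector $\nn(y)$; hence it is an isomorphism of $\R^d$, and $\Phi$ is a local $\c1$-diffeomorphism near $\Gamma \times \{0\}$.

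First I would upgrade local injectivity to a \emph{uniform} statement: by compactness of $\Gamma$ there is $\delta_\Gamma > 0$ such that $\Phi$ is injective on $\Gamma \times (-\delta_\Gamma, \delta_\Gamma)$. Indeed, otherwise one produces sequences $(y_n, t_n) \ne (y_n', t_n')$ with $\Phi(y_n, t_n) = \Phi(y_n', t_n')$ and $t_n, t_n' \to 0$; passing to convergent subsequences $y_n \to y$, $y_n' \to y'$ gives $y = y'$, contradicting local injectivity at $(y, 0)$. Shrinking $\delta_\Gamma$ if necessary, $\Phi$ is then a $\c1$-diffeomorphism from $\Gamma \times (-\delta_\Gamma, \delta_\Gamma)$ onto the tubular neighborhood $\mathcal{U}_\Gamma = \{x : |\d(x)| < \delta_\Gamma\}$.

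Next I would identify the signed distance with the normal coordinate $t$. For $x \in \mathcal{U}_\Gamma$ a nearest point $z \in \Gamma$ exists by compactness, and differentiating $|x - \cdot|^2$ along curves in $\Gamma$ shows $x - z \perp T_z \Gamma$, i.e. $x = \Phi(z, \pm|x - z|)$. Injectivity of $\Phi$ then forces $z$ to be the \emph{unique} nearest point, with $|x - z|$ equal to $|t|$; fixing the sign according to whether $x$ lies inside or outside $\Omega$ yields $\d(\Phi(y,t)) = t$ on $\Gamma \times (-\delta_\Gamma, \delta_\Gamma)$. Consequently $\d = \pi_2 \circ \Phi^{-1}$ and the foot point is $b = \pi_1 \circ \Phi^{-1}$, with $\pi_1, \pi_2$ the smooth projections of $\Gamma \times \R$ onto its factors. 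Differentiating the identity $\d(y + t\, \nn(y)) = t$ in $t$ gives $\na\d(x) \cdot \nn(b(x)) = 1$; since $\d$ is $1$-Lipschitz, $|\na\d| \le 1$, so equality in Cauchy--Schwarz forces $\na\d(x) = \nn(b(x))$. Plugging this into $x = b(x) + \d(x)\, \nn(b(x))$ gives $b(x) = x - \d(x)\na\d(x)$ and shows that $\na\d$ extends $\nn$.

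It remains to settle the regularity, which is the delicate point. A priori the inverse function theorem only yields $\Phi^{-1} \in \c1$, hence $\d, b \in \c1$. To reach $\d \in \c2$ I would invoke the identity $\na\d = \nn \circ b$ just obtained: it is the composition of the $\c1$ Gauss map with the $\c1$ map $b$, hence $\c1$, so $\na\d \in \c1$ and therefore $\d \in \c2$ on $\mathcal{U}_\Gamma$. (Alternatively one parametrizes $\Gamma$ locally as a $\c2$ graph and applies the implicit function theorem to the system characterizing the foot point, as in \cite[\S 14.6]{GT98}.) The two steps I expect to require the most care are the compactness argument furnishing a single $\delta_\Gamma$ valid along all of $\Gamma$, and this ``lose a derivative, then regain it'' bootstrap for the $\c2$ regularity; both are classical, and we follow \cite{tubneig} and \cite[\S 14.6]{GT98}.
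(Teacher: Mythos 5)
Your proof is correct. The paper does not actually prove Proposition~\ref{tub_neigh_orth_proj_prop} --- it is quoted as a classical result with references to \cite{tubneig} and \cite[\S 14.6]{GT98} --- and your argument (normal map $\Phi(y,t)=y+t\,\nn(y)$, inverse function theorem, compactness to get a uniform $\delta_\Gamma$, identification of the signed distance with the normal coordinate, the Cauchy--Schwarz argument giving $\na\d=\nn\circ b$, and the bootstrap to $\c2$ regularity) is precisely the standard proof found in those references, with all the delicate points (uniform injectivity, the loss-and-recovery of one derivative) correctly handled.
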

%
%
%
%
%
%
%
\paragraph{Well posedness of the spectral problem.}
%
%
%
Throughout the rest of the paper, $\d x$ and $\ds$ denote respectively the volume and surface measures on $\Omega$ and on $\Gamma$.

The variational formulation of the studied problem \eqref{sys-eigenval-ventcel} is classically obtained, using the integration by parts formula on the surface $\Gamma$ (see, e.g., \cite{livreopt}): it is then given by, 
\begin{equation}
\label{fv_faible}
  \left\lbrace
      \begin{array}{l}
       \mbox{find } (\lambda, u) \in \R \times \HH1 \omgam, \ \mbox{ such that,}  \\
      a(u,v) = \lambda \, m(u,v), \quad  \forall \ v \in \HH1\omgam,
      \end{array}
    \right.
\end{equation}
where $a$ is the bilinear form, defined on $[\HH1\omgam]^2$, given by,
\begin{equation*}
    a(u,v) :=  \int_{\Omega} \nabla u \cdot \nabla v \, \d x + \int_{\Gamma} \nabla_{\Gamma} u \cdot \nabla_{\Gamma} v \, \ds + \int_{\Gamma} u  v \, \ds ,
\end{equation*} 
and $m$ is the bilinear form, defined on $[\HH1\omgam]^2$, given by,
\begin{equation*}
     m(u,v) := \int_{\Omega} u  v \, \d x.
\end{equation*}

The bilinear form $a$, being symmetric and continuous, is also coercive with respect to the norm over $\HH1 \omgam$, as proved in \cite[Theorem 2]{Jaca}. The second bilinear form $m$ is none other than the scalar product on the space $\L2(\Omega)$. Then by a classical spectral result (see \cite[Theorem~7.3.2]{GA}), we claim the existence of an infinite number of eigenvalues to Problem~\eqref{fv_faible}, which form an increasing sequence $(\lambda_n)_{n\ge 1} \subset {\R }^{*}_{+}$ of positive real numbers, tending to infinity. Their associated eigenfunctions form an orthonormal Hilbert basis of $\L2 (\Omega)$, denoted $(u_n)_{n\ge 1}$ satisfying,
\begin{equation*}
    u_n \in \HH1\omgam, \  \ a(u_n,v) = \lambda_n m( u_n,v ),\ \ \forall \ v \ \in \  \HH1\omgam.
\end{equation*}

Assuming that the eigenvalues are counted with their multiplicity and ordered increasingly, the aim of this work is to approximate an eigenvalue $\lbdi$ of multiplicity $N \ge 1$ 
%
%
%
and  its associated eigenfunctions $\{ \uj \}_{j \in \Ji}$, which are $m$-orthonormal, with  $\Ji= \{i,...,i+N-1 \}$ the set of indices (see~\cite[page 5]{23}). 
\section{Curved mesh and lift definition}
\label{sec:mesh}
Throughout this section, we briefly explain the construction of curved meshes of geometrical order~$r\ge 1$ of the domain~$\Omega$ and give the main associated notations. We refer to Appendix~\ref{mesh:appendix} for details and rigorous
definitions (in particular concerning the mentioned
transformations). Additionally, the definition of the lift operator with some essential lift properties are recalled. We refer to~\cite[\S 3, \S 4]{art-joyce-1} for more exhaustive details and properties. The set of polynomials in~$\R^d$ of order~$r$ or less is denoted by $\P r$. 
From now on, the domain~$\Omega$, is assumed to be at least $\c {r+2}$ regular, and~$\tref$ denotes the reference simplex of dimension~$d$. \medskip
%
%
%
%
%
%

Let $\tauh$ be a polyhedral quasi-uniformal mesh of $\Omega$ made of simplices of dimension $d$, denoted~$T$ (triangles or tetrahedra). The mesh domain $\omh1:=\{\cup \, T, \, T \in \tauh\} $ does not coincide with the physical domain $\Omega$, which is smooth. Each mesh element $T$ is the image of the reference simplex~$\tref$ by the affine transformation $\ft:\tref \to T$. An exact mesh $\taue$ (with domain $\Omega$) is built. To each element $T\in \tauh$ is associated a transformation~$\fte:\tref \to \te:=\fte(\tref)$, which is defined with the help of $\ft$ in Definition \ref{def:fte-y}. The elements of the exact mesh $\taue$ exactly are~$\{ \te=\fte(\tref), \, T \in \tauh\} $, where the exact elements~$\te$ share the same vertices as $T$. All the details are given in  Appendix~\ref{mesh:appendix}.

\paragraph{Curved mesh $\taur$ of order $r$.}
The exact mapping $\fte$, defined in Appendix~\ref{mesh:appendix}, is interpolated as a polynomial of order $r \ge 1$ in the classical $\P r$-Lagrange basis on $\tref$. The interpolant is denoted by $\ftr$, which is a $\c1$-diffeomorphism and is in $\c {r+1}(\tref)$ (see \cite[chap. 4.3]{PHcia}). For more exhaustive details and properties of this transformation, we refer to \cite{elliott,ciaravtransf,PHcia}. 
Note that, by definition,~$\ftr$ and~$\fte$ coincide on all $\P r$-Lagrange nodes in $\tref$. 
The curved mesh of order~$r$ is~$\taur := \{ \tr:= \ftr(\tref); \, T \in \tauh \}$, $\omhr := \cup_{\tr \in  \taur}\tr$ is the mesh domain and~$\ghr:= \partial \omhr$ is its boundary. 

\paragraph{Functional lift.}

In order to lift a function from the mesh domain onto $\Omega$, a well defined transformation going from $\omhr$ to $\Omega$ is needed. In a previous work~\cite{art-joyce-1}, the transformation $\Ghr$ was defined piece-wise such that, 
$$
    \Ghr : \omhr \to \Omega ; \quad {\Ghr}_{|_{\ghr}} = b,
$$
where~$b$ is the orthogonal projection defined in Proposition \ref{tub_neigh_orth_proj_prop}. We refer to Appendix \ref{lift} for the full expression of $\Ghr$. 

\medskip 

By construction, $\Ghr$ is globally continuous and piecewise differentiable on each mesh element. Additionally, quoting \cite[Proposition 2]{art-joyce-1}, where the full proof is detailed: let $\tr \in \taur$, the mapping ${\Ghr}_{|_{\tr}}$ is~$\c{r+1}(\tr)$ regular and a $\c 1$- diffeomorphism from $\tr$ onto $\te$.  Moreover, for a sufficiently small mesh size $h$, there exists a constant $c>0$, independent of $h$, such that, 
\begin{equation}
\label{ineq:Gh-Id_Jh-1} 
  \forall \ x \in \tr, \ \ \ \ \| \diff {\Ghr}(x) - \I \| \le c h^r \qquad \mbox{ and } \qquad 
    | \Jh(x)- 1 | \le c h^r.
\end{equation}
where $\diff \Ghr$ is the differential of $\Ghr$ and $\Jh$ is its Jacobin.

\begin{definition}
\label{def:liftvolume}
    To $u_h\in {\rm L}^2(\omhr,\ghr)$ is associated its lift, denoted $u_h^\ell \in {\rm L}^2\omgam$, given by, 
    $$
        u_h^\ell\circ \Ghr := u_h.
    $$
The lift satisfies the trace property which states
$$
  \forall ~ u_h \in {\rm H}^1(\omhr), \quad 
  \left ( {\rm Tr} ~u_h\right )^{\ell} = {\rm Tr} (u_h^\ell).
$$
\end{definition}

\begin{remarque}
The above trace property is essential in the error analysis detailed in Section~\ref{sec:error-estimation}. This is due to the fact that the restriction of $\Ghr$ to $\ghr$ is equal to the orthogonal projection: ${\Ghr}_{|_{\ghr}} = b.$
\end{remarque}

%
%
%
%
%
%
%
%
%
%
%
%
%
%
%
\paragraph{Lift properties.}
The results presented in this section can be found with more details in \cite{D1,D2,art-joyce-1}. 

%
%
%
%
%
Consider $u_h, v_h \in \HH1 (\omhh) $ and let $u_h^\ell, v_h^\ell \in \HH1 (\Omega)$ be their respected lifts, we have,
\begin{equation}
\label{pass_fct_scalaire_volume}
    \int_{\omhh}u_h v_h \, \d x = \int_\Omega u_h^\ell v_h^\ell \frac{1}{\Jhlifte} \d x,
\end{equation} 
where $\Jh$ denotes the Jacobian of $\Ghr$ and $\Jhlifte$ is its lift given by $\Jhlifte \circ \Ghr = \Jh$. 

Note that for any $x \in \omhr$, using a change of variables $z=\Ghr(x) \in \Omega$, one has, $(\nabla v_h)^\ell(z) = {\transpose} \diff {\Ghr}(x)  \nabla v_h^\ell{(z)},$ where $\transpose \diff \Ghr$ is the transpose of $\diff \Ghr$. 
Introducing the notation, $\G (z) := {\transpose}\diff {\Ghr}(x),$ one has,
\begin{equation}
\label{pass_grad_volume}
     \int_{\Omega^{(r)}_h} \nabla u_h \cdot \nabla v_h \, \d x = \int_{\Omega}  \G (\nabla u_h^\ell) \cdot \G (\nabla v_h^\ell) \frac{1}{\Jhlifte} \, \d x.
\end{equation}
A direct consequence of the inequalities \eqref{ineq:Gh-Id_Jh-1}, using the lift definition \ref{def:liftvolume}, is that both $\G$ and~$\Jhlifte$ are bounded on $\te$. Additionally, we have the following inequalities, which are a key ingredient for the proof of the error estimations,
    \begin{equation}
        \label{ineq:Ghr-Id_1/Jh-1}
        \forall \ x \in \te, \ \ \ \ \| \G (x) - \I \| \le c h^r \qquad \mbox{ and } \qquad
         \left| \frac{1}{\Jhlifte(x)}- 1 \right| \le c h^r.
    \end{equation}

Similarly, let $u_h, v_h \in \HH1 (\ghh) $ with $u_h^\ell, v_h^\ell \in \HH1 (\Gamma)$ as their respected lifts. Then, one has,
\begin{equation}
    \label{pass_fct_scalaire_surface}
    \int_{\ghr}   u_h v_h \, \ds = \int_{\Gamma}   u^{\ell }_h v^{\ell }_h  \frac{ 1}{\Jblifte} \, \ds,
\end{equation}
where $\Jb$ denotes the Jacobian of the orthogonal projection $b$ defined in Proposition~\ref{tub_neigh_orth_proj_prop}, and $\Jblifte$ is its lift given by $\Jblifte \circ b = \Jb$.

A similar equation can be written with tangential gradients, given by the following expression, 
\begin{equation}
\label{pass_grad_surface}
    \int_{\ghr} \nabla_{\ghr} u_h \cdot \nabla_{\ghr}  v_h \, \ds _h = \int_{\Gamma} \Ahell \nabla_{\Gamma} u^{\ell }_h \cdot \nabla_{\Gamma} v^{\ell }_h  \, \ds ,
\end{equation} 
where $\Ahell$ is the lift of the matrix $A_h$ defined in \cite{art-joyce-1,D1}.

We recall two important estimates proved in~\cite{D1} relative to $A_h$ and $\Jb$. There exists a constant~$c>0$, independent of $h$, such that,
\begin{equation}
  \label{ineq:AhJh}
    ||\Ahell -\I ||_{\Linf(\Gamma)} \le c h^{r+1} \qquad \mbox{ and } \qquad 
        \left\| 1-\frac{1}{\Jblifte} \right\|_{\Linf(\Gamma)} \le ch^{r+1}.
\end{equation}

%
%
%
%
%
%
%
%
%
%
%

%
%
%
%
%
\section{Finite element approximation}
\label{sec:FEM}
In this section, is presented the finite element approximation of problem~\eqref{sys-eigenval-ventcel} using a $\P k$-Lagrange finite element method. We refer to, e.g., \cite{EG,PHcia} and \cite[\S 5]{art-joyce-1} for more details on finite element methods. From now on, we denote $\omhh$ and $\ghh$ to refer to~$\omhr$ and~$\ghr$, for any geometrical order $r\ge 1$.
\paragraph{Discrete formulation.} Let~$k \geq 1$ that denotes the finite element degree. Given a curved mesh~$\taur$, the $\P k$-Lagrangian finite element space is given by,
$$
    \Vh := \{ \chi \in C^0(\omhh); \ \chi|_T= \hat{\chi} \circ (\ftr )^{-1} , \ \hat{\chi} \in \mathbb{P}^k(\hat{T}), \ \forall \ T \in \taur \}.
$$
The approximation problem is to find $(\Lambda, U) \in \R\times\Vh$ such that,
\begin{equation}
\label{fvd}
     a_h(U,V ) = \Lambda m_h(U,V ), \ \ \ \ \ \forall \ V  \in \Vh,
\end{equation}
where $a_h$ is the following bilinear form, defined on $\Vh \times \Vh$, 
$$
    a_h(U ,V ) := \int_{\omhh} \nabla U   \cdot \nabla V   \d x + \int_{\ghh} \nabla_{\ghh} U  \cdot \nabla_{\ghh} V  \ds _h +  \int_{\ghh} U   V   \ds _h,
$$
and $m_h$ is the following bilinear form, defined on $\Vh \times \Vh$, 
$$
    m_h(U ,V ) = \int_{\omhh} U   V  \d x.
$$
%
%
%
%
%
%
%

%
%
%
\begin{remarque}
The discrete problem \eqref{fvd} admits an increasing finite sequence of positive discrete eigenvalues $\lbdhj \in \R_+^*$. There exists a basis of $\Vh$ made of discrete eigenfunctions~$\{ U_j \}_{j=1}^{\dim(\Vh)}$, which are $m_h$-orthogonal (see \cite[Lemma 7.4.1]{GA}). 
\end{remarque}
\paragraph{Lifted discrete formulation.} The lifted finite element space is given by, $$\Vhlifte := \{ v_h^\ell, \ v_h \in \Vh \}.$$ We define the lifted bilinear form $\ahell$, defined on $\Vhlifte \times \Vhlifte$, throughout,
$$
a_h(U ,V ) = \ahell(U^\ell,V^\ell), \quad \forall \, U , V  \in \Vh.
$$  
By applying \eqref{pass_grad_volume}, \eqref{pass_grad_surface} and \eqref{pass_fct_scalaire_surface}, then the expression of $\ahell$ is given by,
$$
    \ahell(U^\ell,V^\ell) = \int_{\Omega}  \G (\nabla U ^\ell) \cdot \G (\nabla V ^\ell) \frac{\, \d x}{\Jhlifte}+ \int_{\Gamma} A^{\ell }_h \nabla_{\Gamma} u^{\ell }_h \cdot \nabla_{\Gamma} v^{\ell }_h  \, \ds  
     + \int_{\Gamma} U ^\ell V^\ell \frac{\, \ds }{\Jblifte}.
$$
%
%
%
%
%

In a similar way, using \eqref{pass_fct_scalaire_volume}, we define the expression of $\mhell$, defined on~$\Vhlifte \times \Vhlifte$, throughout~$m_h(U ,V) = \mhell(U ^\ell,V^\ell)$ for $U , V \in \Vh,$ as follows,
\begin{equation*}
    \mhell(U ^\ell,V^\ell) =  \int_{\Omega} U   V \frac{\d x}{J_h^\ell}= m_h(U ,V).
\end{equation*}

Thus, we define the lifted formulation of Problem~\eqref{fvd} given by: find~$(\Lambda, U ^\ell) \in \R \times \Vh^\ell$ such that, 
\begin{equation}
 \label{fvdlifte}
      \ahell(U ^\ell,V)= \Lambda \ \mhell(U ^\ell,V), \quad \forall \, V \in \Vhlifte.
\end{equation}
\begin{remarque}
 The lifted problem \eqref{fvdlifte} shares the same eigenvalues as the discrete problem \eqref{fvd}, denoted $\{ \lbdhj\}_{j=1}^{\dim (\Vh)}$, which are associated to the lift of the eigenfunctions of the discrete formulation \eqref{fv_faible}, denoted $\{\uhjlifte\}_{j=1}^{\dim (\Vh)}$. Throughout the rest of this work, we suppose that the discrete eigenfunctions satisfy that~$\| U_j^\ell\|_{\mhell} =1$, for all $j=1,\dots, \dim(\Vh)$.
\end{remarque}
%
%
%
%
%
%
%
%
%
%
%
%
%
\section{Error analysis}
\label{sec:error-estimation}
First of all, the exact eigenvalues are ordered increasingly with their multiplicities. Let $i \in \N^*$. The aim of this work is to estimate the error produced when approximating the eigenvalue~$\lbdi$ of multiplicity $N$ and its corresponding eigenfunctions, $\{ \uj \}_{j \in \Ji}$ where $\Ji=\{i,...,i+N-1\}$, using a~$\P k$ finite element method on a curved mesh $\omhh$ with a geometrical order $r \ge 1$. These estimations are given in the following theorem, which is proved in the following sub-sections.

To this end, denote $\{\lbdhj\}_{j=1}^{\dim (\Vh)}$ the set of all the discrete eigenvalue. Each eigenvalue $\lbdhj$ is associated to an eigenspace $\Ehjlift$ in $\Vhlifte$, which is the set of all the discrete eigenfunctions associated to $\lbdhj$. Let $\Fhlift := \oplus_{j \in \J} \Ehjlift$ be the space containing all the eigenspaces associated to $\{\lbdhj\}_{j \in \J}$.

\medskip

Throughout this section, $c$ refers to a positive constant independent of the mesh size~$h$ and~$\ci$ refers to a positive constant dependent of the eigenvalue~$\lbdi$ and independent of $h$. From now on, the domain~$\Omega$, is assumed to be at least~$\c {k+1}$ regular such that the exact eigenfunctions of Problem~\eqref{sys-eigenval-ventcel} are in~$\Hk1\omgam$.
\begin{theoreme}
\label{th-error-bound-eigenfunction}
Let $\lbdi$ be an eigenvalue of multiplicity $N$ with its corresponding eigenfunctions,~$\{ \up \}_{p \in \Ji}$ where $\Ji=\{i,...,i+N-1\}$, relatively to Problem~\eqref{fv_faible}.
Then, for any $j\in \J$, there exists $\ci > 0$, 
\begin{equation}
\label{err:eigenvalue}
    |\lbdj- \lbdhj  | \le \ci ( h^{2k} + h^{r+1}),
\end{equation}
where $\lbdhj$ is a discrete eigenvalue relatively to Problem \eqref{fvd}. Additionally, there exists $\ci > 0$ for any $j\in \Ji$ such that, 
\begin{equation}
\label{err:eigenvectors-L2}
    \inf_{U \in \Fhlift}\|\uj-U \ \|_{\L2 (\Omega)}  \le \ci ( h^{k+1} + h^{r+1/2}),
\end{equation}
\begin{equation}
\label{err:eigenvectors-H1}
    \inf_{U\in \Fhlift} \|\uj-U\|_{\HH1\omgam } \le \ci ( h^{k} + h^{r+1/2}),
\end{equation}
where $\Fhlift$ is the space
containing all the eigenspaces associated to $\{\lbdhj\}_{j \in \J}$.
\end{theoreme}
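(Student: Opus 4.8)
The plan is to adapt the abstract spectral approximation framework of Babuška--Osborn, following the approach of Bonito \textit{et al.}~\cite{D4}, to our lifted setting, and to organize the argument as a \emph{bootstrap}: first obtain a crude eigenvalue bound, feed it into eigenfunction estimates, then recover the sharp eigenvalue bound. The central quantity to control is the \emph{consistency gap} between the continuous forms $a,m$ on $\HH1\omgam$ and the lifted discrete forms $\ahell,\mhell$ on $\Vhlifte$. Using \eqref{ineq:Ghr-Id_1/Jh-1} and \eqref{ineq:AhJh}, one shows that for all $w,v\in\Vhlifte$,
\begin{equation*}
  |a(w,v)-\ahell(w,v)| \le c\,h^{r}\,\|w\|_{\HH1\omgam}\|v\|_{\HH1\omgam},
  \qquad
  |m(w,v)-\mhell(w,v)| \le c\,h^{r}\,\|w\|_{\L2(\Omega)}\|v\|_{\L2(\Omega)}.
\end{equation*}
(The surface term is in fact $O(h^{r+1})$, but the volume term only gives $O(h^r)$; this asymmetry, together with the trace term $h^{1/2}$ losses below, is what produces the $h^{r+1/2}$ rates.) I would also record the finite element interpolation estimate: for an exact eigenfunction $\uj\in\Hk1\omgam$ there is $I_h\uj\in\Vhlifte$ with $\|\uj-I_h\uj\|_{\L2(\Omega)}\le c h^{k+1}$ and $\|\uj-I_h\uj\|_{\HH1\omgam}\le c h^{k}$, combining the standard $\P k$-interpolation on $\omhr$ with the lift's boundedness properties and the geometric perturbation estimates.

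Next I would set up the two solution operators. Let $T:\L2(\Omega)\to\L2(\Omega)$ be the solution operator of the continuous problem, $a(Tf,v)=m(f,v)$ for all $v\in\HH1\omgam$, which is compact, self-adjoint and positive, with eigenpairs $(\lambda_n^{-1},u_n)$. Let $T_h:\L2(\Omega)\to\Vhlifte$ be the discrete analogue, $\ahell(T_hf,V)=\mhell(f,V)$ for all $V\in\Vhlifte$ (well-defined and uniformly bounded because $\ahell$ is uniformly coercive and $\mhell$ uniformly equivalent to the $\L2(\Omega)$ inner product, by the perturbation bounds above for $h$ small). The key convergence estimate is that $\|(T-T_h)f\|_{\HH1\omgam}\to 0$ uniformly on the unit ball of $\L2(\Omega)$, with rate: writing $w=Tf$, using Céa-type manipulations, the consistency gaps, and the interpolation bound with $k$ replaced by $1$ (since $Tf$ has only $\HH2\omgam$-type regularity in general, but on the eigenspace $Tf=\lambda^{-1}f$ inherits the full regularity), one gets $\|(T-T_h)f\|_{\HH1\omgam}\le c(h + h^{r/2})\|f\|_{\L2(\Omega)}$ in general and the sharp rate on smooth data; an Aubin--Nitsche duality argument upgrades the $\L2(\Omega)$ norm of the error by one power of $h$, modulo the $h^{r}$ geometric consistency terms. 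From the norm convergence $\|T-T_h\|\to 0$, the Babuška--Osborn theory gives: the discrete eigenvalues near $\lambda_i^{-1}$ converge, the total discrete eigenspace $\Fhlift$ has dimension $N$ for $h$ small, and the gap $\widehat\delta(\Fhlift,E_i)$ between $\Fhlift$ and the continuous eigenspace $E_i=\mathrm{span}\{u_p\}_{p\in\Ji}$ is controlled by $\|(T-T_h)|_{E_i}\|$ in the relevant norm.

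The bootstrap then runs as follows. Step one: from $\|(T-T_h)|_{E_i}\|_{\L2(\Omega)}\le c h$ (crude) one gets a \emph{preliminary} eigenvalue bound $|\lambda_j-\lbdhj|\le \ci h$ and $\mathrm{dim}\,\Fhlift=N$. Step two: with $\Fhlift$ now known to have the right dimension, estimate $\inf_{U\in\Fhlift}\|\uj-U\|$ by choosing $U=T_h(\lambda_j \uj)$ (the natural comparison element) and splitting
\begin{equation*}
  \uj - T_h(\lambda_j\uj) = (T-T_h)(\lambda_j\uj),
\end{equation*}
so the $\HH1\omgam$-error is $\le\lambda_j\|(T-T_h)\uj\|_{\HH1\omgam}\le \ci(h^{k}+h^{r+1/2})$ using the interpolation estimate at full order $k$ (legitimate since $\uj\in\Hk1\omgam$) together with the consistency gaps, and similarly the $\L2(\Omega)$-error is $\le\ci(h^{k+1}+h^{r+1/2})$ by the duality upgrade; one must check $T_h(\lambda_j\uj)$ is close to, then project onto, $\Fhlift$, absorbing the discrepancy into the already-established $O(h)$ spectral gap times the eigenvalue separation — this yields \eqref{err:eigenvectors-L2} and \eqref{err:eigenvectors-H1}. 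Step three: insert these eigenfunction estimates into the standard identity for the Rayleigh-quotient error,
\begin{equation*}
  \lbdhj - \lambda_j
  = \frac{\ahell(w-\uj,w-\uj) - \lambda_j\,\mhell(w-\uj,w-\uj)}{\mhell(w,w)}
    + \big(\text{consistency terms in }a-\ahell,\ m-\mhell\big),
\end{equation*}
with $w\in\Fhlift$ the near-eigenfunction from Step two; the first term is quadratic in the $\HH1\omgam$-eigenfunction error, hence $O(h^{2k}+h^{2r+1})$, and the consistency terms evaluated on $w$ (which is $\HH1\omgam$-bounded, \emph{not} more regular) contribute $O(h^{r})$ \emph{unless} one is careful — here one uses that $w$ is spectrally close to the smooth $\uj$, replaces $w$ by $\uj$ up to $O(h^{k})$ in $\HH1\omgam$, and exploits that the consistency gap against the \emph{exact} eigenfunction improves: $a-\ahell$ and $m-\mhell$ tested on $\uj$ (smooth, with bounded higher derivatives on $\Gamma$) gain the full $h^{r+1}$, giving the $h^{r+1}$ in \eqref{err:eigenvalue}. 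The main obstacle is precisely this last point: honestly tracking how the $O(h^r)$ generic geometric consistency error is improved to $O(h^{r+1})$ in the \emph{eigenvalue} estimate while only $O(h^{r+1/2})$ is claimed (and provable) for the \emph{eigenfunctions} — i.e., correctly bookkeeping which quantities may be tested against smooth functions versus merely $\HH1\omgam$-bounded discrete functions, and where the half-power trace loss $h^{1/2}$ genuinely enters. Establishing the uniform coercivity of $\ahell$ and non-degeneracy of $\mhell$ for small $h$, and the compactness/collective-compactness needed to invoke Babuška--Osborn in this $h$-dependent-norm setting, are comparatively routine given \eqref{ineq:Ghr-Id_1/Jh-1}, \eqref{ineq:AhJh} and the results cited from \cite{art-joyce-1,Jaca}.
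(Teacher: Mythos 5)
Your proposal is correct in substance and reproduces the same bootstrap architecture as the paper (preliminary eigenvalue bound, then eigenfunction bounds, then the sharp eigenvalue bound), both being adaptations of \cite{D4}; the analytic core is also identical: the geometric consistency bounds \eqref{ineq:a-ahell}--\eqref{ineq:m-mhell}, the half-power gain on the boundary layer $B_h^\ell$ from Corollary \ref{coro:h-1/2} (which is exactly where $h^{r+1/2}$ versus $h^{r+1}$ is decided), the spectral separation constant controlling the component orthogonal to $\Fhlift$, the Aubin--Nitsche upgrade for the $\L2$ rate, and the replacement of the discrete eigenfunction by a smooth one before evaluating the consistency terms in the final eigenvalue estimate. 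Where you genuinely diverge is in the machinery. The paper never introduces the solution operators $T$, $T_h$ or invokes operator-norm convergence: the preliminary bound \eqref{err:eigenvalue_initial} comes from an intermediate conforming eigenproblem on $\Vhlifte$ with the exact forms plus a min--max/Rayleigh-quotient comparison; the eigenfunction step works with the Riesz projection $\projG$ and derives exact identities for $\|W\|^2_{\mhell}$ and $\|W\|^2_{\ahell}$, $W=\projG\uj-\ProgZj\uj$, by expanding $W$ in the discrete eigenbasis (Proposition \ref{prop-norm-W-equat}); and the final eigenvalue identity (Lemma \ref{lem:eigenval_bound}) projects the \emph{discrete} eigenfunction onto the \emph{exact} eigenspace rather than the reverse. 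Your choice of comparison element $T_h(\lambda_j\uj)$ instead of $\projG\uj$ is a legitimate and arguably cleaner variant at the spectral-decomposition step, since the relation $\Lambda_p\,\mhell(T_h(\lambda_j\uj),\uhplifte)=\lambda_j\,\mhell(\uj,\uhplifte)$ yields the coefficients $\frac{\lambda_j}{\Lambda_p-\lambda_j}\,\mhell(\uj-T_h(\lambda_j\uj),\uhplifte)$ directly, absorbing the consistency terms that the paper must carry explicitly; the price is that you must separately justify the abstract ingredients (uniform coercivity, dimension of $\Fhlift$, gap estimates) that the paper obtains by hand from the preliminary estimate and Remark \ref{exp:muJ}. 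The two routes deliver the same rates; yours is more modular, the paper's is more self-contained and makes the $h$-dependence of every constant explicit.
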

\begin{remarque}
    In a similar manner, there exists $\ci > 0$ such that, 
    $$
     \inf_{u \in \F}\|U -u\ \|_{\L2 (\Omega)}  \le \ci ( h^{k+1} + h^{r+1/2}), \quad \inf_{u \in \F} \|U -u\|_{\HH1\omgam } \le \ci ( h^{k} + h^{r+1/2}),
    $$
    where $U \in \Vhlifte$ is a discrete eigenfunction associated to $\lbdhj$ and $\F$ is the eigenspace of $\lbdi$. These estimations, which are analogous to those presented in Theorem \ref{th-error-bound-eigenfunction}, are a consequence of Lemma~\ref{lem-equiv-eigenvector-estimations} in Section~\ref{sec:error-estimation-eigenvalue-2}.
\end{remarque}
In order to prove this theorem, we will proceed in several steps. In a nutshell, the main steps of the proof are to first estimate the so-called geometric error, second calculate a preliminary eigenvalue estimation, third estimate the eigenfunction error, and finally combine the last two steps to improve the  eigenvalue error.

\subsection{Geometric error}%
Before estimating the geometric error, we need to present the following corollary, which plays a key role in the remaining of the article (see \cite[Lemma~2]{art-joyce-1}). 
%
%
%
\begin{coro}
\label{coro:h-1/2}
Let $v \in \HH1(\Omega)$ and $w \in \H2(\Omega)$, then, for a sufficiently small $h$, there exists $c>0$ such that the following inequalities hold,
\begin{equation}
\label{ineq:L2+h1/2}
    \|v\|_{\L2(B_h^\ell)} \le c h^{1/2} \|v\|_{\HH1(\Omega)},
\end{equation}
\begin{equation}
\label{ineq:H1+h1/2}
    \|\na w\|_{\L2(B_h^\ell)} \le c h^{1/2} \|w\|_{\H2(\Omega)},
\end{equation}
where $B_h^\ell= \{ \ \te \in \taue ; \  \te \ \mbox{has at least 2 verticies on } \Gamma \}.$
\end{coro}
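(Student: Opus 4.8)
\textbf{Proof plan for Corollary~\ref{coro:h-1/2}.}

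The plan is to reduce the estimate to a local, element-by-element argument on the strip $B_h^\ell$, exploiting the fact that this strip has width $O(h^{r+1})$ (or more simply $O(h^2)$, since these elements carry at least two vertices on $\Gamma$, hence agree with $\Omega$ up to second-order geometric error near $\Gamma$), combined with a trace/Poincaré-type inequality. Concretely, I would first observe that each lifted exact element $\te \in B_h^\ell$ is contained in a tubular collar around $\Gamma$ whose thickness is bounded by $c h^{r+1}$ (this follows from the construction of the exact mesh and the estimates of the type \eqref{ineq:Gh-Id_Jh-1}--\eqref{ineq:AhJh}; for $r=1$ one gets $c h^2$, and in all cases the bound is $o(h)$, certainly $\le c h$ for $h$ small). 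Thus $B_h^\ell$ is contained in a neighborhood $\mathcal{U}_{c h}$ of $\Gamma$ of width at most $c h$.

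Next I would prove the two inequalities by a scaling/trace argument. For \eqref{ineq:L2+h1/2}: on each element, parametrize the collar using the signed distance $\d$ and the projection $b$ of Proposition~\ref{tub_neigh_orth_proj_prop}, writing points as $y + s\,\nn(y)$ with $y \in \Gamma$, $|s| \le c h$. By the one-dimensional trace inequality in the normal direction,
\begin{equation*}
  |v(y+s\nn(y))|^2 \le 2|v(y+t\nn(y))|^2 + 2\Big(\int_{-ch}^{ch} |\na v|\,\d\tau\Big)^2
\end{equation*}
for a suitable reference level $t$, and then averaging over $t$ and applying Cauchy--Schwarz in $s$ gives, after integrating over the collar,
\begin{equation*}
  \|v\|_{\L2(B_h^\ell)}^2 \le c h\,\|v\|_{\L2(\Gamma_{\rm coll})}^2 + c h^2 \|\na v\|_{\L2(B_h^\ell)}^2,
\end{equation*}
where $\Gamma_{\rm coll}$ is $\Gamma$ (or a parallel surface). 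Using the continuous trace inequality $\|v\|_{\L2(\Gamma)} \le c\|v\|_{\HH1(\Omega)}$ and absorbing the gradient term yields $\|v\|_{\L2(B_h^\ell)} \le c h^{1/2}\|v\|_{\HH1(\Omega)}$. For \eqref{ineq:H1+h1/2}, apply the same argument with $v$ replaced by $\na w$, using that $\na w \in \HH1(\Omega)^d$ when $w \in \H2(\Omega)$ and that its trace on $\Gamma$ satisfies $\|\na w\|_{\L2(\Gamma)} \le c\|w\|_{\H2(\Omega)}$; this gives $\|\na w\|_{\L2(B_h^\ell)} \le c h^{1/2}\|w\|_{\H2(\Omega)}$.

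The main obstacle I anticipate is making the change of variables to "collar coordinates" $(y,s)$ fully rigorous and uniform across all elements of $B_h^\ell$: one must check that the collar containing $B_h^\ell$ lies inside the tubular neighborhood $\mathcal{U}_\Gamma$ of Proposition~\ref{tub_neigh_orth_proj_prop} where $b$ and $\d$ are well-behaved, that the Jacobian of the map $(y,s) \mapsto y + s\nn(y)$ is bounded above and below independently of $h$, and that the geometric distortion between $B_h^\ell$ and the true collar (coming from the lift $\Ghr$ and from the fact that the faces on $\Gamma$ are only interpolatory) does not destroy the $h^{1/2}$ rate. Since the strip width is $O(h^{r+1}) = O(h^2)$ or better and we only claim $h^{1/2}$, there is ample room, so these distortions are harmless; the bookkeeping is routine but somewhat technical, which is presumably why the result is quoted from \cite[Lemma~2]{art-joyce-1} rather than reproved here.
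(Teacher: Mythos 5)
Your argument is the standard narrow-band (unfitted strip) estimate --- reduce to collar coordinates $(y,s)$ via the signed distance, apply a one-dimensional trace inequality in the normal direction, integrate, and invoke the trace theorem --- and this is precisely the route taken in the cited reference from which the paper quotes this corollary without reproving it; the skeleton is correct and the $h^{1/2}$ gain comes, as you say, from the $O(h)$ width of the band.

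One assertion in your opening, however, is false and worth correcting even though you do not ultimately use it: the set $B_h^\ell$ is \emph{not} contained in a collar of thickness $c\,h^{r+1}$. Its elements $\te\in\taue$ are full-size simplices of diameter comparable to $h$ having at least two vertices on $\Gamma$, so by shape regularity they extend a distance comparable to $h$ into $\Omega$; the quantity that is $O(h^{r+1})$ is the deviation between the discrete boundary $\ghr$ and $\Gamma$ (as in \eqref{ineq:AhJh}), not the width of the layer of boundary elements. The distinction is not cosmetic: if the band really had width $O(h^{r+1})$ the same argument would yield a factor $h^{(r+1)/2}$ in \eqref{ineq:L2+h1/2}--\eqref{ineq:H1+h1/2}, which would propagate to strictly better geometric-error rates throughout Section~\ref{sec:error-estimation}; the fact that only $h^{1/2}$ holds is exactly because the width is $O(h)$. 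Since your computation is carried out with the collar $\mathcal{U}_{ch}$ of width $c\,h$, the proof stands, but the parenthetical claim should be deleted.
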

We present the following property on the domain $B_h^\ell$, which plays a key role in the geometric error estimation,
\begin{equation}
    \label{eq:Jh-1=0=Dgh-Id}
    \frac{1}{\Jhlifte}-1 =0  \ \ \ \mbox{and} \ \ \ \G - Id = 0, \ \ \ \mbox{in} \ \Omega \setminus B_h^\ell.
\end{equation}

To estimate the geometric error produced while approximating a domain by a mesh of order $r \ge 1$, we bound the difference between the two bilinear forms $a$ and $\ahell$ (resp. $m$ and $\mhell$). 
\begin{proposition}
Let $v, w \in \Vh^\ell$. Then there exists $c>0$ such that,
\begin{equation}
\label{ineq:a-ahell}
    |(a-\ahell)(v,w)| \le c ( h^r  \| \nabla v\|_{\L2(B_h^\ell)} \| \nabla w\|_{\L2(B_h^\ell)} + h^{r+1}\|v\|_{{\HH1(\Gamma)}}\|w\|_{\HH1(\Gamma)}),
\end{equation}
\begin{equation}
\label{ineq:m-mhell}
    |(m-\mhell)(v,w)| \le c h^{r+1}||v||_{\HH1(\Omega)}||w||_{\HH1(\Omega)}.
\end{equation}
\end{proposition}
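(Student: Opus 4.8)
The plan is to estimate the two bilinear form differences separately, using the change of variables formulae \eqref{pass_grad_volume}, \eqref{pass_grad_surface}, \eqref{pass_fct_scalaire_surface}, \eqref{pass_fct_scalaire_volume} to bring everything onto the physical domain $\Omega$ (resp. $\Gamma$), and then bounding the resulting integrands by the geometric perturbation estimates \eqref{ineq:Ghr-Id_1/Jh-1} and \eqref{ineq:AhJh}. The crucial observation that yields the \emph{localized} bound on $B_h^\ell$ rather than a bound on all of $\Omega$ is the property \eqref{eq:Jh-1=0=Dgh-Id}: the integrands of the volume terms vanish identically outside $B_h^\ell$, so the domain of integration for those terms reduces to $B_h^\ell$.

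\medskip

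\textbf{The bilinear form $a$.} Write $a(v,w) - \ahell(v,w)$ as the sum of three contributions coming from the three terms defining $a_h^\ell$. For the bulk gradient term,
$$
  \int_\Omega \nabla v \cdot \nabla w \, \d x - \int_\Omega \G(\nabla v)\cdot \G(\nabla w)\,\frac{\d x}{\Jhlifte}
  = \int_\Omega \Big( \I - \tfrac{1}{\Jhlifte}\, \transpose{\G}\G \Big)\nabla v \cdot \nabla w \, \d x,
$$
and by \eqref{eq:Jh-1=0=Dgh-Id} the matrix $\I - \frac{1}{\Jhlifte}\transpose{\G}\G$ vanishes outside $B_h^\ell$; on $B_h^\ell$ one expands $\transpose{\G}\G = \I + (\G-\I) + \transpose{(\G-\I)} + \transpose{(\G-\I)}(\G-\I)$ and uses $\|\G - \I\| \le ch^r$ together with $|1 - \frac{1}{\Jhlifte}| \le ch^r$ from \eqref{ineq:Ghr-Id_1/Jh-1} to get $\|\I - \frac{1}{\Jhlifte}\transpose{\G}\G\|_{\Linf(B_h^\ell)} \le c h^r$ for $h$ small. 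Cauchy--Schwarz on $B_h^\ell$ then gives the first term of \eqref{ineq:a-ahell}. For the surface gradient term, $\int_\Gamma (\I - \Ahell)\nabla_\Gamma v \cdot \nabla_\Gamma w \, \ds$ is bounded using $\|\Ahell - \I\|_{\Linf(\Gamma)} \le ch^{r+1}$ from \eqref{ineq:AhJh}; for the surface zeroth-order term, $\int_\Gamma (1 - \frac{1}{\Jblifte}) vw\,\ds$ is bounded using $\|1 - \frac{1}{\Jblifte}\|_{\Linf(\Gamma)}\le ch^{r+1}$, again from \eqref{ineq:AhJh}. Cauchy--Schwarz on $\Gamma$ and the definition of the $\HH1(\Gamma)$ norm absorb both surface contributions into the $h^{r+1}\|v\|_{\HH1(\Gamma)}\|w\|_{\HH1(\Gamma)}$ term.

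\medskip

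\textbf{The bilinear form $m$.} Here $m(v,w) - \mhell(v,w) = \int_\Omega (1 - \frac{1}{\Jhlifte}) vw \, \d x$, which by \eqref{eq:Jh-1=0=Dgh-Id} reduces to an integral over $B_h^\ell$, bounded by $ch^r\|v\|_{\L2(B_h^\ell)}\|w\|_{\L2(B_h^\ell)}$. This is only an $O(h^r)$ bound localized on $B_h^\ell$; to reach the stated $O(h^{r+1})$ bound in the full $\HH1(\Omega)$ norm, apply the trace-type inequality \eqref{ineq:L2+h1/2} of Corollary~\ref{coro:h-1/2} to each factor, gaining an extra $h^{1/2}$ per factor: $\|v\|_{\L2(B_h^\ell)} \le ch^{1/2}\|v\|_{\HH1(\Omega)}$ and likewise for $w$. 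The product of the two $h^{1/2}$ gains with the $h^r$ prefactor yields $h^{r+1}$, giving \eqref{ineq:m-mhell}.

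\medskip

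The main obstacle is the $m$ estimate: the naive bound is only $O(h^r)$, and the improvement to $O(h^{r+1})$ genuinely relies on the fact that the perturbation is supported in the thin shell $B_h^\ell$ of width $O(h)$ near $\Gamma$, which is exactly what Corollary~\ref{coro:h-1/2} quantifies. One must be careful that \eqref{ineq:L2+h1/2} requires only $\HH1(\Omega)$ regularity (so it applies to arbitrary $v,w \in \Vh^\ell$), whereas \eqref{ineq:H1+h1/2} would require $\H2$ regularity and is therefore not needed here. The $a$ estimate is comparatively routine once \eqref{eq:Jh-1=0=Dgh-Id} is invoked for the bulk term; note in particular that its bulk contribution is left at $O(h^r)$ on $B_h^\ell$ without the extra $h^{1/2}$ gains, since in the later error analysis the gradient factors $\|\nabla v\|_{\L2(B_h^\ell)}$ will be handled by Corollary~\ref{coro:h-1/2} applied to functions with enough regularity at the point of use.
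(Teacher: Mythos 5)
Your proof is correct and follows essentially the same route as the paper: the $m$ estimate is handled identically (localization to $B_h^\ell$ via \eqref{eq:Jh-1=0=Dgh-Id}, the $O(h^r)$ bound from \eqref{ineq:Ghr-Id_1/Jh-1}, then the extra $h^{1/2}$ per factor from \eqref{ineq:L2+h1/2}), while for the $a$ estimate the paper simply cites \cite[Proposition 6.3]{art-joyce-1}, and your term-by-term expansion using \eqref{ineq:Ghr-Id_1/Jh-1} for the bulk term and \eqref{ineq:AhJh} for the two surface terms is exactly the argument that citation contains.
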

\begin{proof}
The proof of \eqref{ineq:a-ahell} is detailed in \cite[Proposition 6.3]{art-joyce-1}, given using \eqref{ineq:Ghr-Id_1/Jh-1} and~\eqref{ineq:AhJh}. To prove~\eqref{ineq:m-mhell}, consider $v, w \in \Vhlifte$. Using \eqref{eq:Jh-1=0=Dgh-Id} and \eqref{ineq:Ghr-Id_1/Jh-1}, we have, 
\begin{multline*}
     |(m-\mhell)(v,w)|  = |\int_{\Omega}  v w \ (1- \frac{1}{J^\ell_h}) \d x| 
     = |\int_{B_h^\ell} v w \ (1- \frac{1}{J^\ell_h}) \d x| \\
      \le \left\| 1- \frac{1}{J^\ell_h}\right\|_{\Linf(B_h^\ell)} \| v \|_{\L2(B_h^\ell)}\|  w \|_{\L2(B_h^\ell)} 
      \le c h^r \| v \|_{\L2(B_h^\ell)}\|  w \|_{\L2(B_h^\ell)}.
\end{multline*}
Since $v, w \in\Vhlifte \subset \HH1\omgam$, we apply \eqref{ineq:L2+h1/2} as follows,   
\begin{align*}
     |(m-\mhell)(v,w)| & \le c h^r  \left( h^{1/2} \|v\|_{\HH1(\Omega)} \right) \left( h^{1/2} \|w\|_{\HH1(\Omega)}\right) \le c h^{r+1} \|v\|_{\HH1(\Omega)} \|w\|_{\HH1(\Omega)}.
\end{align*}
\end{proof}
%
%
%
%
%
%
\begin{coro}
\label{coro:norm-equiv-with-h-r}
Considering a sufficiently small mesh size $h>0$, the boundary of the mesh domain~$\ghh$ is inside the tubular neighbourhood $\mathcal{U}_{\delta_\Gamma}$, defined in Proposition \ref{tub_neigh_orth_proj_prop}. Then,  there exists~$c>0$ such that,   
\begin{equation*}
\label{A_a}
    ||.||_{\ahell} \le (1+ch^r) ||.||_{a}, \ \ \ \ \ \  ||.||_{a} \le (1+ch^r)  ||.||_{\ahell}, 
\end{equation*}
\begin{equation*}
\label{M_m}
    ||.||_{\mhell} \le (1+ch^{r}) ||.||_{m}, \ \ \ \ \ \  ||.||_{m} \le (1+ch^{r})   ||.||_{\mhell},
\end{equation*}
where the norms $\|u\|_a, \|u\|_m, \|u\|_{\ahell}, \|u\|_{\mhell}$ are associated to the bilinear forms~$a$, $\ahell$, $m$ and $\mhell$, respectively. 
%
Consequently, the norms $||.||_{\ahell}$ and $||.||_{a}$ (resp.~$||.||_{\mhell}$ and $||.||_{m}$) are equivalent.
\end{coro}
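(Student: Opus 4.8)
The plan is to deduce the norm equivalences directly from the bilinear form estimates \eqref{ineq:a-ahell} and \eqref{ineq:m-mhell}, combined with the auxiliary estimate \eqref{ineq:L2+h1/2}--\eqref{ineq:H1+h1/2} of Corollary \ref{coro:h-1/2} that converts the $B_h^\ell$-localized norms appearing on the right-hand sides into global norms with an extra power of $h^{1/2}$. First I would write, for $u \in \Vh^\ell$,
\begin{equation*}
    \|u\|_{\ahell}^2 = \ahell(u,u) = a(u,u) + (\ahell - a)(u,u) = \|u\|_a^2 + (\ahell - a)(u,u),
\end{equation*}
and then bound the last term using \eqref{ineq:a-ahell} with $v=w=u$:
\begin{equation*}
    |(\ahell-a)(u,u)| \le c\left( h^r \|\nabla u\|_{\L2(B_h^\ell)}^2 + h^{r+1}\|u\|_{\HH1(\Gamma)}^2 \right).
\end{equation*}
For the first summand I apply \eqref{ineq:H1+h1/2} (noting $u \in \Vh^\ell \subset \HH1\omgam$, and that the finite element functions are piecewise smooth so the $\H2$-type bound on $B_h^\ell$ is available, or more simply applying \eqref{ineq:L2+h1/2} to $\nabla u$ componentwise on the relevant elements), which yields $\|\nabla u\|_{\L2(B_h^\ell)}^2 \le c\, h\, \|u\|_{\HH1(\Omega)}^2$ — wait, I must be careful: \eqref{ineq:H1+h1/2} requires $w\in\H2(\Omega)$. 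On $B_h^\ell$ the function $u$ is a polynomial on each element, so one uses instead an inverse-type estimate together with \eqref{ineq:L2+h1/2} applied to the first derivatives, or, following \cite[Proposition 6.3]{art-joyce-1}, one simply retains $\|\nabla u\|_{\L2(B_h^\ell)} \le \|\nabla u\|_{\L2(\Omega)} \le \|u\|_{\HH1(\Omega)}$ and $\|u\|_{\HH1(\Gamma)} \le \|u\|_{\HH1\omgam}$. Either way one obtains $|(\ahell-a)(u,u)| \le c\, h^r\, \|u\|_a^2$, using the coercivity of $a$ with respect to the $\HH1\omgam$-norm established in \cite[Theorem 2]{Jaca}.

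Having $\|u\|_{\ahell}^2 \le \|u\|_a^2 + c h^r \|u\|_a^2 = (1 + c h^r)\|u\|_a^2$, I take square roots and use $\sqrt{1+ch^r}\le 1+ch^r$ (for $h$ small, up to adjusting $c$) to get $\|u\|_{\ahell}\le (1+ch^r)\|u\|_a$. For the reverse inequality I write symmetrically $\|u\|_a^2 = \|u\|_{\ahell}^2 + (a-\ahell)(u,u) \le \|u\|_{\ahell}^2 + c h^r\|u\|_a^2$; for $h$ small enough that $c h^r \le 1/2$ this rearranges to $\|u\|_a^2 \le 2\|u\|_{\ahell}^2$, which bootstraps back into the bound $|(a-\ahell)(u,u)|\le c h^r\|u\|_{\ahell}^2$ and hence $\|u\|_a \le (1+ch^r)\|u\|_{\ahell}$ after taking square roots. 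The argument for $\mhell$ versus $m$ is identical but simpler, using \eqref{ineq:m-mhell} directly: $|\,(m-\mhell)(u,u)\,| \le c h^{r+1}\|u\|_{\HH1(\Omega)}^2 \le c h^{r+1} C \|u\|_m^2$ is not immediate since $m$ is only the $\L2(\Omega)$ inner product and does not control the full $\HH1(\Omega)$-norm — so here I instead keep the estimate in the form $\le c h^r \|u\|_{\L2(B_h^\ell)}^2$ from the proof of \eqref{ineq:m-mhell} and note $\|u\|_{\L2(B_h^\ell)} \le \|u\|_{\L2(\Omega)} = \|u\|_m$, giving $|(m-\mhell)(u,u)| \le c h^r \|u\|_m^2$ and the same two-sided bootstrap as above.

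The main obstacle I anticipate is precisely the bookkeeping around which global norm actually controls the localized right-hand sides: $m$ does not dominate $\|\cdot\|_{\HH1(\Omega)}$, so one cannot naively feed \eqref{ineq:m-mhell} in its stated form into a self-referential estimate; the fix is to revert to the sharper intermediate bound $\le ch^r\|\cdot\|_{\L2(B_h^\ell)}^2$ visible inside the proof of that proposition. Likewise, for $a$ one must invoke coercivity of $a$ on $\HH1\omgam$ to close the loop. Everything else — taking square roots, absorbing constants, and the ``for $h$ small enough'' threshold that makes $ch^r$ absorbable — is routine. The final sentence (equivalence of the norms) is then immediate: the two-sided bound $(1+ch^r)^{-1}\|\cdot\|_a \le \|\cdot\|_{\ahell} \le (1+ch^r)\|\cdot\|_a$ with $h^r \to 0$ shows the two norms are uniformly equivalent for $h$ below the threshold, and similarly for $m$ and $\mhell$.
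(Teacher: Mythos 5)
Your proof is correct and follows essentially the same route as the paper: expand $\|u\|_{\ahell}^2 - \|u\|_a^2 = (\ahell-a)(u,u)$, bound the difference by $c h^r \|u\|_a^2$ via \eqref{ineq:a-ahell} (noting that both $\|\nabla u\|^2_{\L2(B_h^\ell)}$ and $\|u\|^2_{\HH1(\Gamma)}$ are dominated by $a(u,u)$), and take square roots using $1+x\le(1+\tfrac12 x)^2$; the paper then dispatches the remaining three inequalities with ``in a similar manner''. Your two additional observations --- that the reverse inequalities require absorbing the $ch^r\|u\|_a^2$ term for $h$ small enough, and that the $m$/$\mhell$ case cannot invoke \eqref{ineq:m-mhell} as stated (since $\|\cdot\|_m$ does not control $\|\cdot\|_{\HH1(\Omega)}$) but must revert to the intermediate bound $c h^r\|u\|^2_{\L2(B_h^\ell)}\le c h^r\|u\|_m^2$ from its proof --- are precisely the details needed to make that ``similar manner'' rigorous.
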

\begin{proof} 
This proof is an adaptation of the proof of \cite[Corollary 2.3]{D4}, which is detailed for readers convenience. Let $u \in \HH1 \omgam$, one has,
$$
    ||u||_{a^\ell_h}^2- ||u||_{a}^2 = a^\ell_h(u,u)-a(u,u) = \big(a^\ell_h-a \big) (u,u).
$$
Then, we deduce that,
$$
    ||u||_{a^\ell_h}^2 
    \le  ||u||_{a}^2 + |\big( a_h^\ell-a \big) (u,u)| \le (1+ch^r) ||u||_{a}^2,
$$
where we used the geometric error estimation \eqref{ineq:a-ahell}. 
Taking its square rout, it follows that,
\begin{equation*}
     ||u||_{a_h^\ell} \le \sqrt{(1+ch^r)} ||u||_{a} \le (1+ch^r) ||u||_{a},
\end{equation*} 
since for any $x \ge 0 $, $ 1+x \le (1+\frac{1}{2}x)^2$.  
In a similar manner, the rest of the inequalities can be proved, by using \eqref{ineq:a-ahell} and \eqref{ineq:m-mhell}.
\end{proof}
\subsection{Preliminary eigenvalue estimate}
\label{sec:error-estimation-eigenvalue-1}
A preliminary eigenvalue error estimation is needed before proceeding with the error estimation. It has to be noted that a similar result was established in~\cite[Th. 3.3]{D4}, in a different context. For sake of completeness, we detail the proof of the following proposition. 
\begin{proposition}
%
%
%
Let $\lbdi$ be an exact eigenvalue of multiplicity $N$ of Problem~\eqref{fv_faible}, such that $\lbdj=\lbdi$, for any $j\in \J =\{i,...,i+N-1\}$. Let $\{ \lbdhp \}_{p=1}^{\dim(\Vh)}$ be the set of discrete eigenvalues relatively to Problem~\eqref{fvd}. Then, for any~$j \in \J$, there exists $\ci > 0$ such that,
\begin{equation}
\label{err:eigenvalue_initial}
    |\lbdj- \lbdhj  | \le \ci ( h^{2k} + h^{r}).
\end{equation}
\end{proposition}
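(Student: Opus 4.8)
The natural approach is the classical min–max (Courant–Fischer) strategy for comparing eigenvalues of two symmetric pairs of bilinear forms: here $(a,m)$ on $\HH1\omgam$ and $(\ahell,\mhell)$ on the finite–dimensional subspace $\Vhlifte$. Since the discrete problem is a conforming Galerkin approximation up to the geometric perturbation of the forms, I would write
$$
\lbdhj = \min_{\substack{S \subset \Vhlifte \\ \dim S = j}} \ \max_{v \in S \setminus\{0\}} \frac{\ahell(v,v)}{\mhell(v,v)}, \qquad
\lbdj = \min_{\substack{S \subset \HH1\omgam \\ \dim S = j}} \ \max_{v \in S \setminus\{0\}} \frac{a(v,v)}{m(v,v)},
$$
and compare the two Rayleigh quotients on a common $j$-dimensional trial space. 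The upper bound $\lbdhj \le \lbdi + \ci(h^{2k}+h^r)$ comes from feeding into the discrete min–max the space $\mathcal{I}_h(\F)$, where $\F = \mathrm{span}\{\up\}_{p\in\J}$ is the exact eigenspace and $\mathcal{I}_h$ is the (lifted) Lagrange interpolation operator onto $\Vhlifte$: standard $\P k$ interpolation estimates on curved meshes (using the $\c{k+1}$ regularity assumed on $\Omega$ and the $\Hk1\omgam$ regularity of the eigenfunctions) give $\|\up - \mathcal{I}_h \up\|_{\HH1\omgam} \le c h^k$, and then Corollary~\ref{coro:norm-equiv-with-h-r} converts Rayleigh quotients in $(a,m)$ into Rayleigh quotients in $(\ahell,\mhell)$ at the cost of a multiplicative factor $(1+ch^r)$. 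Controlling the maximum of the Rayleigh quotient over the perturbed interpolated space gives a bound of the form $\lbdi(1+ch^r) + c h^{2k}$, whence the $h^{2k}+h^r$ term (note the $h^{2k}$, not $h^k$, because the Rayleigh quotient is quadratic and the numerator defect against the eigenfunction is $O(h^{2k})$ by the usual Galerkin orthogonality / Aubin–Nitsche-type argument on the eigenspace).

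For the reverse inequality $\lbdj \le \lbdhj + \ci(h^{2k}+h^r)$ one argues symmetrically, but now one must lift a $j$-dimensional subspace of $\Vhlifte$ back into $\HH1\omgam$ — here it already lies in $\HH1\omgam$ since $\Vhlifte \subset \HH1\omgam$, so this direction is in fact the easier one: take the span of the first $j$ discrete eigenfunctions, use Corollary~\ref{coro:norm-equiv-with-h-r} to pass from $(\ahell,\mhell)$- to $(a,m)$-Rayleigh quotients with factor $(1+ch^r)$, and conclude $\lbdj \le \lbdhj(1+ch^r)$, i.e. $|\lbdj-\lbdhj| \le \ci h^r$ from this side plus the $h^{2k}$ from the other side. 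Combining the two inequalities, together with boundedness of $\lbdhj$ (which follows from the same min–max bound applied crudely, so the $\lbdi \cdot ch^r$ terms are genuinely $O(h^r)$ with an $\lbdi$-dependent constant), yields \eqref{err:eigenvalue_initial}.

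The step I expect to be the main obstacle is making the min–max comparison fully rigorous in the presence of \emph{two different bilinear forms on the numerator and denominator simultaneously}: one cannot simply quote the textbook Courant–Fischer comparison, because $\ahell \ne a$ \emph{and} $\mhell \ne m$, so both the numerator and denominator of every Rayleigh quotient shift. The clean way around this is to observe, via Corollary~\ref{coro:norm-equiv-with-h-r}, that for every nonzero $v \in \HH1\omgam$,
$$
(1+ch^r)^{-2}\,\frac{a(v,v)}{m(v,v)} \ \le\ \frac{\ahell(v,v)}{\mhell(v,v)} \ \le\ (1+ch^r)^{2}\,\frac{a(v,v)}{m(v,v)},
$$
so the perturbed Rayleigh quotient is pinched between $(1\pm ch^r)$ multiples of the unperturbed one pointwise; taking min over subspaces of dimension $j$ then transfers this to $\lbdhj$ versus $\min$–$\max$ of $a/m$ over $j$-dimensional subspaces of $\Vhlifte$, and the remaining gap between that constrained min–max and the true $\lbdj$ is exactly the Galerkin/interpolation error contributing the $h^{2k}$ term. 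One must also be a little careful that the $h^{2k}$ term in the upper bound really does come out quadratic — this requires using that the numerator error of the interpolant against the exact eigenfunction, after subtracting the $m$-projection onto $\F$, is $O(h^{k})$ in both $\HH1\omgam$ and $O(h^{k+1})$ in $\L2$, so that the cross terms in the Rayleigh quotient expansion produce $h^{k}\cdot h^{k}$; this is the one place where a genuine (if short) computation is needed rather than a black-box citation. Everything else is bookkeeping with the constants $c$ and $\ci$ and the elementary inequality $1+x\le(1+x/2)^2$ already used in the excerpt.
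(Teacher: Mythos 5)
Your proposal is correct and follows essentially the same route as the paper: the paper introduces the intermediate eigenvalues $\lbdtildej$ of the conforming problem $a(\tilde U,v)=\tilde\lambda\, m(\tilde U,v)$ on $\Vhlifte$ (your ``constrained min--max of $a/m$ over $j$-dimensional subspaces of $\Vhlifte$''), cites the classical conforming Galerkin theory for $|\lbdj-\lbdtildej|\le \ci h^{2k}$, and handles $|\lbdtildej-\lbdhj|\le \ci h^{r}$ exactly by your pinching of Rayleigh quotients via Corollary~\ref{coro:norm-equiv-with-h-r} inside the Courant--Fischer characterization. The only cosmetic difference is that you propose to re-derive the $h^{2k}$ conforming estimate by hand from interpolation of the exact eigenspace, where the paper simply quotes it.
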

\begin{proof}
Let $j \in \J$. To estimate the error $|\lbdj - \lbdhj|$, we introduce the following intermediate formulation: Find $(\Tilde{\lambda}, \Tilde{U})  \in \R^+\times \Vhlifte$, such that,
\begin{equation}
\label{fv-vhell-sur-a-m}
    a(\Tilde{U}, v)=\tilde{\lambda} m(\Tilde{U},v) \ \ \ \ \forall \ v \in \Vhlifte.
\end{equation} 
Problem~\eqref{fv-vhell-sur-a-m} has a finite number of solutions. Denote $\lbdtildep \in \R$ its eigenvalues, for $p=1,...,\dim(\mathbb{V}_h^\ell)$. Then for $j \in \J$, we separate the eigenvalue error as follows,
$$
   |\lbdj-\lbdhj|\le |\lbdj-\lbdtildej| +|\lbdtildej-\lbdhj|,
$$
and estimate each term separately. \medskip

For the estimation of $|\lbdj-\lbdtildej|$, note that the eigenvalue problem \eqref{fv-vhell-sur-a-m} is in a conformal, coercive and consistent setting. Indeed, the variationnal form is defined using the same bilinear forms $a$ and $m$ as in the initial formulation \eqref{fv_faible}, with the approximation space $\Vhlifte \subset \HH1 \omgam$. Thus, we refer to the detailed explanation in \cite[chapter 3.3]{EG} to obtain the following classical estimation, 
\begin{equation}
    \label{ineq:val-p-preuve-2}
    |\lbdj - \lbdtildej| \le  \ci h^{2k}.
\end{equation}
From the inequality \ref{ineq:val-p-preuve-2}, we notice that for any $j \in \J$, $0<\lbdtildej \le \ci$. 

To estimate $|\lbdtildej-\lbdhj|$, we proceed in a analogous way as in \cite[Lemma 3.1]{D4}. Note that, by \cite[Proposition 3.63]{EG}, the discrete eigenvalues can be written a follows,
\begin{equation}
\label{eq:def-lambda}
      \lbdhj= \min_{{E \in V_j}}\max _{v\in E 
      }R_{{a^\ell_h}}(v) 
      \quad \mbox{and} \quad
      \lbdtildej=\min_{{E \in V_j}}\max_{v\in E 
      } R_{a}(v),
\end{equation}
where the associated Rayleigh quotients are written as follows, 
\begin{equation*}
     R_{\ahell}(v) = \frac{{\ahell}(v,v)}{\mhell(v,v)} \quad \mbox{and} \quad R_a(v) = \frac{a(v,v)}{m(v,v)},
\end{equation*}
where $V_j$ is the set of all sub-spaces of $\Vhlifte$ of dimension $j$. 

Consider $E \in V_j$. By definition of the Rayleigh quotient and using the norm equivalence in Corollary \ref{coro:norm-equiv-with-h-r}, we can deduce for any $v \in E$,
\begin{equation*}
    R_{a_h^\ell}(v) = \frac{{a^\ell_h}(v,v)}{m_h^\ell(v,v)} \le \frac{(1+ch^r)^2 a(v,v)}{\frac{m(v,v)}{(1+ch^{r})^2}} = (1+ch^r)^4 R_a(v).
\end{equation*}
Using \eqref{eq:def-lambda}, it follows that, 
\begin{equation*}
    \lbdhj \le \min_{{E \in V_j}}\max_{v\in E}  (1+ch^r)^4 R_a(v) =  (1+ch^r)^4 \lbdtildej.
\end{equation*}
Then, $ \lbdhj  \le  \lbdtildej + ch^r \lbdtildej$, and we have, $\lbdhj  -  \lbdtildej \le ch^r \lbdtildej \le \ci h^r.$ In a similar manner, we can prove that $\lbdtildej - \lbdhj \le \ci h^r.$ To conclude, we combine these two inequalities as follows,
\begin{equation}
    \label{ineq:val-p-preuve}
     |\lbdhj - \lbdtildej|\le \ci h^r.
\end{equation}

To conclude, we combine \eqref{ineq:val-p-preuve} and \eqref{ineq:val-p-preuve-2} to arrive at \eqref{err:eigenvalue_initial}.
\end{proof}
\begin{remarque}
\label{exp:muJ}
    As a result of the estimation \ref{err:eigenvalue_initial}, the eigenvalues $\{\lbdj\}_{j\in \J}$ are only approximated by the set of discrete eigenvalues $\{ \lbdhj \}_{j\in \J}$. {Consequently, for a sufficiently small mesh step $h$, the following quantity, which appears in the eigenfunction estimations, is finite,}
\begin{equation*}
    \muJ = \max_{j \in \J} \max_{p \notin \J} |\frac{\lbdj}{\Lambda_p -\lbdj}|<\infty.
\end{equation*}
Additionally, the set of  eigenvalues $\{ \lbdhj  \}_{j \in \J}$ is separated from the rest of the continuous spectrum, i.e.,
\begin{equation*}
     \lambda_{i-1} < \Lambda_{i} \ \ \ \mbox{and}  \ \ \ \Lambda_{i+N-1} < \lambda_{i+N}.
\end{equation*}
Furthermore, this set of discrete eigenvalues $\{ \lbdhj  \}_{j \in \J}$ can be bounded independently from the mesh size $h$. Indeed, there exists $\ci>0$, such that, $|\lbdhj|\le \ci$, for all $j \in \J.$
We refer to \cite[page 6]{14}, \cite[\S 2.3]{23}, \cite[page 3]{eigenval1}, \cite[Section 3.2]{27} and \cite[remark 3.4]{D4}. 
\end{remarque}

\subsection{Eigenfunction error estimations}
\label{sub-sec:eigenvect-proof}
In this section, is presented the proof of the estimations \eqref{err:eigenvectors-L2} and \eqref{err:eigenvectors-H1} of Theorem \ref{th-error-bound-eigenfunction}.
To begin with, we recall that $\Fhlift = \oplus_{j \in \J} \Ehjlift=\oplus_{j \in \J}{\rm span}\{\uhjlifte\}$. We define the following projections, which are a useful tool in the eigenfunction error estimates (see \cite[\S 1.6.3]{EG}).
\begin{definition}
\label{def:projections}
We define the following projections:
\begin{itemize}
    \item Let $\projG : \HH1 \omgam \to \Vhlifte$ be the Riesz projection, such that $\forall \ v \in \HH1\omgam$, there exists a unique finite element function $\projG (v) \in \Vhlifte$ that satisfies, 
    \begin{equation*}
        \ahell(\projG (v),w) = \ahell(v,w), \quad \forall \ w \in \Vhlifte.
    \end{equation*}
%
%
%
%
    \item Let $\ProgZj : \HH1 \omgam \to \Fhlift $ be the orthogonal projection with respect to $\ahell$ onto $\Fhlift$, such that for all $v \in \HH1\omgam $, 
    \begin{equation*}
        \ahell(\ProgZj (v),w) = \ahell(v,w), \quad  \forall \ w \in \Fhlift.
    \end{equation*} 
     \item Let $\ProjHj: \HH1 \omgam  \to \Fhlift $ be the orthogonal projection with respect to $\mhell$ onto $\Fhlift$, such that for all $v \in \HH1 \omgam$, 
     \begin{equation*}
        \mhell(\ProjHj(v),w) = \mhell(v,w), \quad \forall \ w \in \Fhlift .
     \end{equation*}
\end{itemize}
\end{definition}
%
%
%
%
%
%
%
%
%
\begin{remarque}
\label{rem:express-Zlambdaj}
Note that the previous orthogonal projections satisfy the following relation (see \cite[Section 2.4]{D4} and \cite[Lemma 2.2]{23}),  $$\ProgZj =\ProjHj \circ \projG.$$
\end{remarque}
%
%
The key idea, in proof of the estimations \eqref{err:eigenvectors-L2} and \eqref{err:eigenvectors-L2}, is to separate the error in two terms for both norms as follows,
\begin{equation*}
    \inf_{U \in \Fhlift}\|\uj-U \ \|  \le \|\uj-\projG \uj \ \|+ \|\projG \uj - \ProgZj \uj\ \|.
\end{equation*}
The first term will be bounded using a classical interpolation result (see~\cite[\S 1.6.3]{EG}). If $\uj \in \Hk1 \omgam$, by definition of the Riesz projection $\projG$, there exists $c>0$ such that, 
\begin{equation}
\label{ineq:Gellu-u-ahell}
    \|\uj -\projG \uj  \|_{a_h^\ell} = \inf_{v \in \Vh^\ell} \|\uj  - v \|_{a_h^\ell} \le c h^k \| \uj \|_{\Hk1\omgam}.
\end{equation}
Using an Aubin-Nitsche argument as proved in Appendix \ref{appendix:proof-mhell-u-Gu}, there exists $c>0$ such that,
\begin{equation}
\label{ineq:Gellu-u-mhell}
  \|\uj -\projG \uj  \|_{\mhell}  \le c h^{k+1}.
\end{equation}
As for the second term, we recall that $\{ \uhplifte\}_{p=1}^{\dim (\Vh) }$ forms an orthonormal basis of $\Vhlifte$ with respect to $\mhell$. The lifted space finite element space can be decomposed as follows~$\Vhlifte:=\Fhlift \oplus \Shlift$, where~$\Fhlift := \oplus_{j \in \J} {\rm span}\uhjlifte$ and $\Shlift := \oplus_{p \not\in \J} {\rm span} \uhplifte$ are orthogonal spaces with respect to $\mhell$. We denote,
\begin{equation}
\label{expression:W}
    W := \projG \uj- \ProgZj \uj= \projG \uj- \ProjHj \circ\projG \uj.
\end{equation} Since $\ProjHj$ is the orthogonal projection over $\Fhlift$ with respect to $\mhell$, then we have,
\begin{equation*}
     W \in \Shlift, \quad \mhell(W, \uhjlifte) = 0, \quad \forall \, j \in \J.
\end{equation*}
Consequently, 
$W  =\sum_{p \notin \J} \beta_p \uhplifte$, where we denote the coefficients $\beta_p := \mhell(W, \uhplifte)$, for all $p \notin \J$. Then the $\mhell$ norm of $W$ is given as follows,
\begin{equation}
    \label{eq:W-beta}
        \|W\|^2_{\mhell} = \sum_{p\notin \J} \beta_p^2,
\end{equation}
since $\{ \uhplifte\}_{p\notin\J}$ forms an orthonormal basis of $\Shlift$ for the product $\mhell$.

\medskip

In the following propositions the $\ahell$ and $\mhell$ norms of $W$ will be evaluated in order to bound the error afterwards. We introduce the following notation, 
\begin{equation}
    \label{eq:Z}Z:=\sum_{p\not\in \J}\frac{\lbdi}{\Lambda_p - \lbdi}\beta_p \uhplifte,
\end{equation}
where $\Lambda_p$ is a discrete eigenvalue with its associated eigenfunction $\uhplifte$, and $\lbdi$ is the exact eigenvalue.
\begin{proposition}
\label{prop-norm-W-equat}
Let $j\in \Ji$ and $\uj$ be an exact eigenfunction associated with~$\lbdi$. 
The norms of~$W$, given in  \eqref{expression:W}, can be expressed as follows,
\begin{equation}
\label{eq:exp-W-mhell}
    \|W\|^2_{m_h^\ell}
    = m_h^\ell(\uj-\projG \uj,Z ) +(m- m_h^\ell)(\uj,Z ) 
    + \frac{1}{\lbdi} (a_h^\ell-a)(\uj,Z),
\end{equation}
\begin{equation}
\label{eq:exp-W-ahell}
     \|W\|^2_{a_h^\ell} = \lbdi m_h^\ell( \uj - \projG \uj, W) +\lbdi \|W\|^2_{\mhell} +\lbdi (m- m_h^\ell) ( \uj, W) + (a_h^\ell-a) (\uj, W).
\end{equation}
\end{proposition}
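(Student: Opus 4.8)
The plan is to establish both identities by direct algebraic manipulation starting from the definitions of the projections $\projG$, $\ProgZj$, $\ProjHj$ and exploiting the eigenvalue relations, in the spirit of \cite[Section 2.4]{D4} and \cite[Lemma 2.2]{23}. The main idea is that $W \in \Shlift$ is $\mhell$-orthogonal to $\Fhlift$, so testing against $\uhplifte$ for $p \notin \J$ selects exactly the coefficients $\beta_p$, and the auxiliary function $Z$ from \eqref{eq:Z} is precisely engineered so that the combination $\ahell(\cdot,\uhplifte) - \lbdi \,\mhell(\cdot,\uhplifte) = (\Lambda_p - \lbdi)\mhell(\cdot,\uhplifte)$ converts $\mhell$-pairings with $W$ into the desired form.

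For \eqref{eq:exp-W-mhell}, I would start from $\|W\|^2_{\mhell} = \sum_{p\notin\J}\beta_p^2$ as in \eqref{eq:W-beta}, and rewrite this as $\mhell(W,Z')$ for a suitable test function; more directly, since $\mhell(W,\uhplifte) = \beta_p$ and $\ahell(\uhplifte,\uhplifte) = \Lambda_p\mhell(\uhplifte,\uhplifte)$, one expands $\|W\|^2_{\mhell}$ by replacing $W$ with $\projG\uj - \ProgZj\uj$ and using that $\ProgZj\uj \in \Fhlift$ is $\mhell$-orthogonal to $Z \in \Shlift$. This yields $\|W\|^2_{\mhell} = \mhell(\projG\uj, Z) = \mhell(\uj - \projG\uj, Z) + \mhell(\uj,Z)$ up to the $Z$ being in the right space — wait, one must be careful: $Z$ lies in $\Shlift$, so $\mhell(\ProgZj\uj, Z)=0$, giving $\|W\|^2_{\mhell}$ paired against $W$ equals $\mhell(W,Z)$ times the appropriate factor. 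The key algebraic step is: for each $p\notin\J$, $\mhell(W,\uhplifte) = \beta_p$, while the definition of $Z$ gives the weight $\lbdi/(\Lambda_p - \lbdi)$; then one writes $\beta_p^2 = \beta_p \cdot \mhell(W,\uhplifte)$ and, using $\ahell(\uhjlifte,\uhplifte)=0$ and the eigenrelation for $\uhplifte$, substitutes $\beta_p = \tfrac{1}{\Lambda_p - \lbdi}\big(\ahell(W,\uhplifte) - \lbdi\,\mhell(W,\uhplifte)\big)$. Combining over $p$ and recognizing $\sum_p \tfrac{\lbdi}{\Lambda_p-\lbdi}\beta_p\uhplifte = Z$, one obtains $\lbdi\|W\|^2_{\mhell} = \ahell(W,Z) - \lbdi\,\mhell(\projG\uj, Z)$ or similar; then replacing $W = \projG\uj - \ProgZj\uj$, using the Riesz identity $\ahell(\projG\uj, Z) = \ahell(\uj, Z)$, the $\ahell$-orthogonality of $\ProgZj\uj$, and finally swapping $a \leftrightarrow \ahell$ and $m\leftrightarrow\mhell$ at the cost of the correction terms $(m-\mhell)(\uj,Z)$ and $(\ahell - a)(\uj,Z)$ together with the exact eigenrelation $a(\uj,v) = \lbdi\,m(\uj,v)$, produces \eqref{eq:exp-W-mhell} after dividing by $\lbdi$.

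For \eqref{eq:exp-W-ahell}, the plan is analogous but testing against $W$ itself: expand $\|W\|^2_{\ahell} = \ahell(W,W)$, write $W = \projG\uj - \ProgZj\uj$, use $\ahell(\projG\uj, W) = \ahell(\uj, W)$ (Riesz) and $\ahell(\ProgZj\uj, W) = 0$ (since $W \in \Shlift$ and $\ProgZj\uj \in \Fhlift$ are $\ahell$-orthogonal — this uses Remark~\ref{rem:express-Zlambdaj} and that $\Fhlift$ is $\ahell$-invariant), so $\|W\|^2_{\ahell} = \ahell(\uj, W)$. Then replace $\ahell(\uj,W)$ by $a(\uj,W) + (\ahell - a)(\uj,W) = \lbdi\,m(\uj,W) + (\ahell-a)(\uj,W)$ using the exact eigenrelation, and rewrite $\lbdi\,m(\uj,W) = \lbdi\,\mhell(\uj,W) + \lbdi(m-\mhell)(\uj,W)$. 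Finally decompose $\mhell(\uj,W) = \mhell(\uj - \projG\uj, W) + \mhell(\projG\uj, W)$ and observe $\mhell(\projG\uj, W) = \mhell(\projG\uj - \ProgZj\uj, W) + \mhell(\ProgZj\uj, W) = \mhell(W,W) + 0 = \|W\|^2_{\mhell}$, since $\ProgZj = \ProjHj\circ\projG$ is the $\mhell$-projection onto $\Fhlift$ and $W \perp_{\mhell} \Fhlift$. Assembling these gives \eqref{eq:exp-W-ahell}.

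The main obstacle I anticipate is bookkeeping the orthogonality relations correctly — in particular keeping straight which projection ($\ahell$-orthogonal $\ProgZj$ versus $\mhell$-orthogonal $\ProjHj$) annihilates which pairing, and justifying that $\ProgZj\uj$ is simultaneously $\ahell$-orthogonal and (via the eigenspace structure) usable in the $\mhell$ identity. The cleanest route is to prove once and for all, from Remark~\ref{rem:express-Zlambdaj} and the $m_h^\ell$-orthogonality of the eigenbasis $\{\uhplifte\}$, that $W = \projG\uj - \ProgZj\uj \in \Shlift$, hence $\mhell(W,\uhjlifte)=\ahell(W,\uhjlifte)=0$ for all $j\in\J$; everything else is substitution using the Riesz property $\ahell(\projG v, w)=\ahell(v,w)$ for $w\in\Vhlifte$, the exact eigenrelation $a(\uj,v)=\lbdi m(\uj,v)$, and the definition \eqref{eq:Z} of $Z$. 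No estimates are needed here — these are exact identities — so the only real work is careful term-tracking.
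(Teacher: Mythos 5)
Your proposal is correct and follows essentially the same route as the paper: the Riesz property of $\projG$, the discrete eigenrelation for $\uhplifte$, the $\mhell$- and $\ahell$-orthogonality of $\Fhlift$ and $\Shlift$, the exact eigenrelation $a(\uj,\cdot)=\lbdi m(\uj,\cdot)$, and the add-and-subtract of the geometric correction terms $(m-\mhell)$ and $(\ahell-a)$. The only cosmetic difference is that for \eqref{eq:exp-W-mhell} you aggregate over $p$ into $Z$ before applying the eigenrelations, whereas the paper solves for $\mhell(\projG\uj,\uhplifte)$ term by term and sums at the end; the momentarily garbled identity $\|W\|^2_{\mhell}=\mhell(\projG\uj,Z)$ in your sketch is harmless since you correct it in the subsequent key step.
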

\begin{proof}
This proof is inspired from \cite[Lemma 4.1]{D4}, but for sake of completeness we detail it. The main difference is that in our case, we do not consider a surface problem as in \cite{D4} and the eigenfunctions $\{ \uj \}_{j \in \Ji}$ are on $\Omega$. 

By Equation \eqref{eq:W-beta}, the $\mhell$ norm of $W$ is written as follows,
\begin{align}
   \label{eq:mhell-Glifte-uj-Upell}
  \|W\|^2_{\mhell} & =\sum_{p \notin \J} \beta_p^2=\sum_{p \notin \J} \beta_p \mhell(W, \uhplifte).
\end{align}
To prove \eqref{eq:exp-W-mhell}, we try to estimate $\mhell(W, \uhplifte)$, 
for $p \notin \J$. 
By Remark~\ref{rem:express-Zlambdaj},~$\ProgZj = \ProjHj \circ \projG$, and we get for $p \notin \J$,
\begin{equation}
    \label{eq:Zlifte+Upell-mhell=0}
    m_h^\ell( \ProgZj  v , \uhplifte ) =  m_h^\ell(\ProjHj ( \projG v) , \uhplifte )=0, \qquad \forall \, v \in \HH1 \omgam.
\end{equation} 
%
%
%
%
%
%
%
%
%
Using \eqref{eq:Zlifte+Upell-mhell=0}, we get for $p \notin \J$,
\begin{equation*}
  \mhell(W , \uhplifte ) 
   =  \mhell(\projG  \uj -  \ProgZj  \uj , \uhplifte ) 
     =  m_h^\ell(\projG (\uj) , \uhplifte ).
\end{equation*} 
The next step is to estimate $\mhell \Big(\projG \uj,  \uhplifte  \Big)$. We denote $\Lambda_p$ a discrete eigenvalue associated to~$\uhplifte  \in \Shlift$ such that,
\begin{equation*}
   \Lambda_p  \mhell(V,\uhplifte) = \ahell(V,\uhplifte), \ \ \ \ \forall \ V \in \Vhlifte.
\end{equation*}
Taking in the latter equation $V=\projG  \uj \in \Vhlifte$, we get by using the definition of~$\projG$,
$$
   \Lambda_p  \mhell(\projG  \uj,\uhplifte) = \ahell(\projG  \uj,\uhplifte) 
    =\ahell(\uj,\uhplifte) 
    =a(\uj,\uhplifte) + (a_h^\ell-a)(\uj,\uhplifte).
$$
Since $\uj$ is an exact eigenfunction associated to $\lbdi$ of Problem \eqref{fv_faible}, we get, 
\begin{eqnarray*}
      \Lambda_p  \mhell(\projG  \uj,\uhplifte) & = &\lbdi m(\uj,\uhplifte) + (a_h^\ell-a)(\uj,\uhplifte) \\
   & =& \lbdi \mhell(\uj,\uhplifte) +\lbdi (m- \mhell)(\uj,\uhplifte)+ (a_h^\ell-a)(\uj,\uhplifte),
\end{eqnarray*}
where we added and subtracted $\lbdi \mhell(\uj,\uhplifte)$.

Subtracting $ \lbdi  \mhell(\projG  \uj,\uhplifte)$ on both sides of the equation, we get,
\begin{equation*}
    (\Lambda_p - \lbdi)  \mhell(\projG  \uj,\uhplifte) =\lbdi \mhell(\uj-\projG  \uj,\uhplifte) +\lbdi (m- \mhell)(\uj,\uhplifte)+ (a_h^\ell-a)(\uj,\uhplifte).
\end{equation*}
For any $p \notin \J$, $\Lambda_p - \lbdi \ne 0$, then we have,
\begin{multline*}
    \mhell(\projG  \uj, \uhplifte)  =  \frac{1}{\Lambda_p - \lbdi}\lbrace \lbdi \mhell(\uj-\projG  \uj,\uhplifte) +\lbdi (m- \mhell)(\uj,\uhplifte)  + (a_h^\ell-a)(\uj,\uhplifte) \rbrace \\
      =  \mhell(\uj-\projG  \uj,\frac{\lbdi}{\Lambda_p - \lbdi}\uhplifte) +(m- \mhell)(\uj,\frac{\lbdi}{\Lambda_p - \lbdi}\uhplifte) + \frac{1}{\lbdi} (a_h^\ell-a)(\uj,\frac{\lbdi}{\Lambda_p - \lbdi}\uhplifte).
\end{multline*}
To  arrive to \eqref{eq:exp-W-mhell}, we replace the latter expression in \eqref{eq:mhell-Glifte-uj-Upell} as follows, 
\begin{multline*}
        \|W\|^2_{\mhell}  = \sum_{p \in \{1,...dim(\Vh)\} \setminus \J} \beta_p \Big( \mhell(\uj-\projG \uj,\frac{\lbdi}{\Lambda_p - \lbdi}\uhplifte)\\
        +(m- \mhell)(\uj,\frac{\lbdi}{\Lambda_p - \lbdi}\uhplifte)
    + \frac{1}{\lbdi} (a_h^\ell-a)(\uj,\frac{\lbdi}{\Lambda_p - \lbdi}\uhplifte) \Big).
\end{multline*}

The proof of \eqref{eq:exp-W-ahell} is a tad similar to the latter one. Keeping in mind that~$W = (\I- \ProjHj )\projG  \uj$, its $\ahell$-norm is written as follows,
\begin{multline*}
    \|W\|_{a_h^\ell}^2  = a_h^\ell (W,W) 
     =a_h^\ell ((\I-\ProjHj )\projG  \uj, (\I-\ProjHj )\projG  \uj) \\
    = a_h^\ell (\projG  \uj, (\I-\ProjHj )\projG  \uj) - a_h^\ell (\ProjHj\projG  \uj, (\I-\ProjHj )\projG  \uj).
\end{multline*}
Note that, for any $V \in \Fhlift$, we have,
\begin{equation*}
    a_h^\ell ((\I-\ProjHj )\projG  \uj, V)   = a_h^\ell (\projG  \uj, V) - a_h^\ell (  \ProjHj \circ \projG \uj, V) 
     = a_h^\ell (\uj, V) - a_h^\ell ( \ProgZj   \uj, V) = 0,
\end{equation*}
where we used the definitions of the orthogonal projections $\projG$ and $\ProgZj$. 
Thus, taking~$V = \ProjHj\projG  \uj \in \Fhlift$, $a_h^\ell (\ProjHj\projG  \uj, (\I-\ProjHj )\projG  \uj) = 0$. Then, the latter equation becomes,
\begin{equation*}
    \|W\|_{a_h^\ell}^2
     =a_h^\ell (\projG  \uj, (\I-\ProjHj )\projG  \uj) 
     =a_h^\ell ( \uj, (\I-\ProjHj )\projG  \uj)
     = a_h^\ell ( \uj, W),
\end{equation*}
where we used the definition of the orthogonal projection $\projG$ with respect to $\ahell$, given in Definition~\ref{def:projections}.
Adding and subtracting $a(\uj, W)$, we get,
\begin{equation*}
    \|W\|_{a_h^\ell}^2=a_h^\ell ( \uj, W)  = a( \uj, W) + (a_h^\ell ( \uj, W)-a ( \uj, W))
      = \lbdi m( \uj, W) + (a_h^\ell-a) ( \uj, W)).
\end{equation*}
Since $\uj$ is an exact eigenfunction associated to $\lbdi$, the latter equation holds. Adding and subtracting $\lbdi \mhell( \uj, W)$, we have, 
\begin{equation}
    \label{eq:exp-W-ahell-proof}
\|W\|_{a_h^\ell}^2
= \lbdi \mhell( \uj, W) +\lbdi (m- m_h^\ell) ( \uj, W) + (a_h^\ell-a) ( \uj, W)).
\end{equation}
Notice that $W=\sum_{p \notin \J} \beta_p \uhplifte,$ then by applying \eqref{eq:Zlifte+Upell-mhell=0}, we have~$m_h^\ell(\ProgZj \uj, W)=0.$ Then, we notice that,
\begin{multline*}
    \lbdi m_h^\ell( \uj, W) = \lbdi m_h^\ell( \uj, W) - \lbdi m_h^\ell(\ProgZj \uj, W) \\
    = \lbdi m_h^\ell( \uj - \projG \uj, W) +\lbdi m_h^\ell( \projG \uj -  \ProgZj \uj, W) 
     = \lbdi m_h^\ell( \uj - \projG \uj, W) +\lbdi m_h^\ell( W, W),
\end{multline*}
where we added and subtracted $\lbdi m_h^\ell( \projG \uj, W)$.
Thus after replacing the latter equation in \eqref{eq:exp-W-ahell-proof}, we get exactly \eqref{eq:exp-W-ahell}, 
\begin{equation*}
    \|W\|_{a_h^\ell}^2 = \lbdi m_h^\ell( \uj - \projG \uj, W) +\lbdi \|W \|^2_{\mhell} +\lbdi (m- m_h^\ell) ( \uj, W) \\
    + (\ahell-a) ( \uj, W).
\end{equation*}
\end{proof}

The following proposition is one of the main novelties of this work. The general idea of bounding the norms of $W$ can be seen in \cite{D4,23,14} for different problems. 
\begin{proposition}
Under the assumptions of Proposition \ref{prop-norm-W-equat}, there exists $\ci~>~0$ such that,
\begin{equation}
\label{ineq:W-mhell_au-carre}
    \|W\|_{\mhell} \le \ci \|\uj - \projG \uj\|_{ \mhell}   + \ci h^{r+1/2}\|\uj\|_{{\H2\omgam}},
\end{equation}
\begin{equation}
\label{ineq:W-ahell_au-carre}
    \|W\|_{a_h^\ell} \le \ci \|\uj-\projG \uj \|_{\mhell}   + \ci  h^{r+1/2}\|\uj\|_{{\H2\omgam}},
\end{equation}
where the expression of $W$ is given in  \eqref{expression:W}.
\end{proposition}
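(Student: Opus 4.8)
The plan is to start from the two identities for $\|W\|_{\mhell}^2$ and $\|W\|_{\ahell}^2$ established in Proposition~\ref{prop-norm-W-equat}, and bound each term on the right-hand side using the geometric error estimates \eqref{ineq:a-ahell}, \eqref{ineq:m-mhell}, the Corollary~\ref{coro:h-1/2} trace-type inequalities on the strip $B_h^\ell$, and the Cauchy--Schwarz inequality. The key observation making everything work is that the auxiliary function $Z$ defined in \eqref{eq:Z} satisfies, by construction, $\|Z\|_{\mhell} \le \muJ \|W\|_{\mhell}$ with $\muJ$ finite for small $h$ (Remark~\ref{exp:muJ}), and similarly $\|Z\|_{\ahell}$ is controlled by $\|W\|_{\mhell}$ up to a constant depending on $\lbdi$ and on the separation of the spectrum, since $Z = \sum_{p\notin\J}\frac{\lbdi}{\Lambda_p-\lbdi}\beta_p \uhplifte$ is a linear combination of eigenfunctions with eigenvalues bounded away from $\lbdi$. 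So $Z$ behaves like $W$ up to $\ci$-constants in both norms.

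For the $\mhell$ estimate \eqref{ineq:W-mhell_au-carre}, I would bound the three terms of \eqref{eq:exp-W-mhell} as follows. The first term $\mhell(\uj - \projG\uj, Z)$ is handled by Cauchy--Schwarz in the $\mhell$ inner product: it is at most $\|\uj - \projG\uj\|_{\mhell}\,\|Z\|_{\mhell} \le \ci \|\uj-\projG\uj\|_{\mhell}\,\|W\|_{\mhell}$. The second term $(m-\mhell)(\uj, Z)$ is bounded using \eqref{ineq:m-mhell} by $c h^{r+1}\|\uj\|_{\HH1(\Omega)}\|Z\|_{\HH1(\Omega)}$; here one uses the norm equivalence of Corollary~\ref{coro:norm-equiv-with-h-r} together with $\|Z\|_{\ahell}\lesssim \|Z\|_{\mhell}$ (valid since $Z$ lies in the span of finitely many eigenfunctions with bounded eigenvalues — an inverse-type bound on the eigenspaces, again from Remark~\ref{exp:muJ}) to replace $\|Z\|_{\HH1(\Omega)}$ by $\ci\|W\|_{\mhell}$, absorbing $h^{r+1}\le h^{r+1/2}$. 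The third term $\frac{1}{\lbdi}(\ahell-a)(\uj,Z)$ is the delicate one: applying \eqref{ineq:a-ahell} gives $c(h^r\|\nabla\uj\|_{\L2(B_h^\ell)}\|\nabla Z\|_{\L2(B_h^\ell)} + h^{r+1}\|\uj\|_{\HH1(\Gamma)}\|Z\|_{\HH1(\Gamma)})$, and now Corollary~\ref{coro:h-1/2} gives $\|\nabla\uj\|_{\L2(B_h^\ell)}\le c h^{1/2}\|\uj\|_{\H2(\Omega)}$, so the first piece is $\le c h^{r+1/2}\|\uj\|_{\H2(\Omega)}\|\nabla Z\|_{\L2(B_h^\ell)}$; bounding $\|\nabla Z\|_{\L2(B_h^\ell)}$ crudely by $\|Z\|_{\ahell}\le\ci\|W\|_{\mhell}$ finishes it. Collecting, $\|W\|_{\mhell}^2 \le \ci(\|\uj-\projG\uj\|_{\mhell} + h^{r+1/2}\|\uj\|_{\H2\omgam})\|W\|_{\mhell}$, and dividing through by $\|W\|_{\mhell}$ yields \eqref{ineq:W-mhell_au-carre}.

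For \eqref{ineq:W-ahell_au-carre} I would run the same scheme on identity \eqref{eq:exp-W-ahell}, this time pairing against $W$ directly. The terms $\lbdi\mhell(\uj-\projG\uj,W)$ and $\lbdi\|W\|_{\mhell}^2$ are bounded by $\ci\|\uj-\projG\uj\|_{\mhell}\|W\|_{\mhell}$ and $\ci\|W\|_{\mhell}^2$; using the $\mhell$-bound \eqref{ineq:W-mhell_au-carre} already proven, $\|W\|_{\mhell}^2 \le \ci(\|\uj-\projG\uj\|_{\mhell} + h^{r+1/2}\|\uj\|_{\H2\omgam})\|W\|_{\mhell}$, and one still needs to control $\|W\|_{\mhell}$ by something — here I would use $\|W\|_{\mhell}\le c\|W\|_{\ahell}$ is the wrong direction, so instead I bound all terms by $(\cdots)\|W\|_{\ahell}$: write $\|W\|_{\mhell}\le \ci\|W\|_{\ahell}$ is false, but $\lbdi\mhell(\uj-\projG\uj,W)\le \ci\|\uj-\projG\uj\|_{\mhell}\|W\|_{\mhell}$ and then use \eqref{ineq:W-mhell_au-carre} to replace the trailing $\|W\|_{\mhell}$, keeping it as a product with the already-small factor; the term $\lbdi\|W\|_{\mhell}^2$ is similarly $\le \ci(\|\uj-\projG\uj\|_{\mhell}+h^{r+1/2}\|\uj\|_{\H2\omgam})^2$, a pure square of the target bound. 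The geometric error terms $\lbdi(m-\mhell)(\uj,W)$ and $(\ahell-a)(\uj,W)$ are bounded exactly as before via \eqref{ineq:m-mhell}, \eqref{ineq:a-ahell} and Corollary~\ref{coro:h-1/2}, producing $\ci h^{r+1/2}\|\uj\|_{\H2\omgam}\|W\|_{\ahell}$ (the factor $\|\nabla W\|_{\L2(B_h^\ell)}\le\|W\|_{\ahell}$). Dividing the resulting quadratic inequality $\|W\|_{\ahell}^2\le \ci(\cdots)\|W\|_{\ahell} + \ci(\cdots)^2$ by $\|W\|_{\ahell}$ and using $a+\sqrt{b^2}=a+b$ gives \eqref{ineq:W-ahell_au-carre}.

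The main obstacle I anticipate is the bookkeeping around the auxiliary function $Z$: one must justify carefully that $\|Z\|_{\ahell}$, $\|Z\|_{\mhell}$, $\|Z\|_{\HH1\omgam}$ and $\|\nabla Z\|_{\L2(B_h^\ell)}$ are all $\le \ci\|W\|_{\mhell}$ with $\ci$ independent of $h$. This rests on three facts that must all be invoked cleanly: (i) the coefficients $\frac{\lbdi}{\Lambda_p-\lbdi}$ are uniformly bounded by $\muJ<\infty$ for small $h$ (Remark~\ref{exp:muJ}), giving $\|Z\|_{\mhell}\le\muJ\|W\|_{\mhell}$; (ii) $Z\in\Shlift$ is spanned by discrete eigenfunctions whose eigenvalues $\Lambda_p$, for those actually contributing, can in fact be taken in a bounded range, or more simply one uses $\ahell(Z,Z)=\sum\Lambda_p(\tfrac{\lbdi}{\Lambda_p-\lbdi})^2\beta_p^2$ and a spectral-gap argument to bound this by $\ci\sum\beta_p^2=\ci\|W\|_{\mhell}^2$ — this step needs the separation $\Lambda_{i-1}<\lambda_i$ etc. and is the place where the hypothesis "$h$ sufficiently small" is genuinely used; and (iii) Corollary~\ref{coro:norm-equiv-with-h-r} to pass between $\ahell$- and $a$-norms and hence control $\|Z\|_{\HH1\omgam}$. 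Once these uniform bounds on $Z$ are in hand, the rest is a routine but careful application of Cauchy--Schwarz and the already-established geometric and trace estimates.
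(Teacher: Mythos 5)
Your proof is correct, but it takes a genuinely different route at the key step. The paper only uses $\|Z\|_{\ahell}^2=\sum_{p\notin\J}\Lambda_p\bigl(\tfrac{\lbdi}{\Lambda_p-\lbdi}\bigr)^2\beta_p^2\le\muJ^2\|W\|_{\ahell}^2$, so its bound for $\|W\|_{\mhell}^2$ retains a term $(\epsilon_1^2+\epsilon_2^2/\lbdi^2)\|W\|_{\ahell}^2$; the two estimates are therefore coupled, and the paper resolves the coupling by substituting the $\ahell$-bound into the $\mhell$-bound and tuning $\epsilon_1,\epsilon_2$ so that the resulting $\|W\|_{\mhell}^2$ term can be absorbed into the left-hand side. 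You instead prove the stronger bound $\|Z\|_{\ahell}\le\ci\|W\|_{\mhell}$, using that $t\mapsto t\lbdi^2/(t-\lbdi)^2$ is uniformly bounded on the discrete spectrum outside $\J$ once it is separated from $\lbdi$ by an $h$-independent gap (guaranteed by Remark~\ref{exp:muJ} for $h$ small). This decouples the system: every term on the right of \eqref{eq:exp-W-mhell} becomes linear in $\|W\|_{\mhell}$, one divides through to get \eqref{ineq:W-mhell_au-carre} directly, and \eqref{ineq:W-ahell_au-carre} then follows from \eqref{eq:exp-W-ahell} by substitution and a standard quadratic-inequality argument. Your route buys freedom from the $\epsilon$-bookkeeping at the price of one extra spectral observation; the paper's route needs only the definition of $\muJ$ as given. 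One caveat: the aside in your second paragraph justifying $\|Z\|_{\ahell}\lesssim\|Z\|_{\mhell}$ by ``$Z$ lies in the span of finitely many eigenfunctions with bounded eigenvalues'' is not a valid justification --- the $\Lambda_p$ are unbounded as $h\to 0$ --- but this is harmless because the correct argument, via the explicit representation $\ahell(Z,Z)=\sum_{p\notin\J}\Lambda_p\bigl(\tfrac{\lbdi}{\Lambda_p-\lbdi}\bigr)^2\beta_p^2$ together with the spectral gap, is exactly the one you invoke in point (ii) of your last paragraph; make sure that is the version you write down.
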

\begin{proof}
This proof is decomposed into three steps. 
\begin{enumerate}
\item Using the geometric error estimates \eqref{ineq:a-ahell} and \eqref{ineq:m-mhell}, the $\mhell$-norm of $W$, given by \eqref{eq:exp-W-mhell}, can be estimated as follows,
\begin{eqnarray*}
      \|W\|^2_{\mhell} & = & \mhell(\uj - \projG \uj,Z ) +\bigg[(m-\mhell) + \frac{1}{\lbdi}(a_h^\ell-a) \bigg](\uj,Z )\\
    & \le & c  \|\uj - \projG \uj\|_{ \mhell}  \|Z \|_{\mhell} + c h^{r+1} \|\uj\|_{\HH1(\Omega)} \|Z \|_{\HH1(\Omega)} \\
    & & \quad + \frac{c}{\lbdi} \bigg(h^r \|\nabla \uj\|_{\L2(B_h^\ell)} \|\nabla Z  \|_{\L2(B_h^\ell)}+ h^{r+1}\|\uj\|_{{\HH1(\Gamma)}}\|Z  \|_{\HH1(\Gamma)}\bigg),
\end{eqnarray*}
where the expression of $Z$ is given in \eqref{eq:Z}.
Keeping in mind that the discrete eigenfunctions are $\mhell$-orthogonal, by Remark \eqref{exp:muJ} of $\mu_J$, we have,
\begin{equation}
\label{ineq:muJ-norme-W-mhell}
    \| Z   \|_{\mhell}^2  = \sum_{p\not\in \J} \big( \frac{\lbdi}{\Lambda_p - \lbdi} \big)^2 \beta_p^2 \| \uhplifte \|_{\mhell}^2  \le \muJ^2 \|W\|^2_{\mhell}.
\end{equation}

Since $\uhplifte $ is a discrete eigenfunction associated to $\Lambda_p$, then for any $q \ne p$, we have $a_h^\ell(\uhplifte ,U_q^\ell) = \Lambda_p \mhell (\uhplifte ,U_q^\ell).$
This implies that, that the discrete eigenfunctions $\{U_p^\ell\}_{p\notin \J}$ are $\ahell$-orthogonal, and that the following inequality holds,
$$
    \| Z   \|_{a_h^\ell}^2 \le \muJ^2 \|W\|^2_{a_h^\ell}.
$$
As a consequence, one can deduce the following,
\begin{equation}
    \label{ineq:muJ-norme-W-ahell}
\|\nabla Z  \|_{\L2(B_h^\ell)} \le \muJ\|W\|_{a_h^\ell}  \quad \mbox{ and } \quad \|Z  \|_{\HH1(\Gamma)} \le \muJ \|W\|_{a_h^\ell}.
\end{equation}
Additionally, we get,
\begin{align*}
    \|Z \|_{\HH1(\Omega)} & \le \|Z \|_{\ahell} + \|Z \|_{\mhell} \le \muJ \|W\|_{\ahell} + \muJ \|W\|_{\mhell}.
\end{align*}
Using the latter inequality alongside \eqref{ineq:muJ-norme-W-mhell} and \eqref{ineq:muJ-norme-W-ahell}, we get,
%
%
%
%
\begin{multline*}
      \|W\|^2_{\mhell} \le c \muJ \|\uj - \projG \uj\|_{ \mhell}  \|W\|_{\mhell} + ch^{r+1} \muJ \|\uj\|_{\HH1(\Omega)} (\|W\|_{\ahell} +  \|W\|_{\mhell}) \\
    + \frac{c}{\lbdi} \bigg(h^r \|\nabla \uj\|_{\L2(B_h^\ell)}  + h^{r+1}\|\uj\|_{{\HH1(\Gamma)}} \bigg) \muJ \|W\|_{a_h^\ell}.
\end{multline*}
Since the exact eigenfunctions $\uj$ belongs to $\H2\omgam$, by applying \eqref{ineq:H1+h1/2}, we obtain,
\begin{eqnarray*}
      \|W\|^2_{\mhell} & \le & c \muJ \|\uj - \projG \uj\|_{ \mhell}  \|W\|_{\mhell} + c h^{r+1} \muJ \|\uj\|_{\HH1(\Omega)} \|W\|_{\mhell} \\
     & & \quad + c(1+\frac{1}{\lbdi})\muJ h^{r+1} \| \uj\|_{\HH1\omgam} \|W\|_{a_h^\ell} + c\muJ\frac{1}{\lbdi}h^{r+1/2} \| \uj\|_{\H2\omgam} \|W\|_{a_h^\ell}\\
     & \le & c \muJ \|\uj - \projG \uj\|_{ \mhell}  \|W\|_{\mhell} + c h^{r+1} \muJ \|\uj\|_{\HH1(\Omega)} \|W\|_{\mhell} \\
     & & \quad + c\muJ h^{r+1} \| \uj\|_{\HH1\omgam} \|W\|_{a_h^\ell} + c\muJ\frac{1}{\lbdi}h^{r+1/2} \| \uj\|_{\H2\omgam} \|W\|_{a_h^\ell}.
\end{eqnarray*}
%
%
%
%
%
%
%
%
%
%
%
%
Young's inequality, which states that, for all $\epsilon>0$, $ab \le \frac{a^2}{\epsilon^2} + \epsilon^2 b ^2$, is applied in the following inequality multiple times as follows, for $\epsilon_1,~\epsilon_2~>~0~,$ 
\begin{eqnarray*}
      \|W\|^2_{\mhell} & \le & 4c\muJ^2 \|\uj - \projG \uj\|^2_{ \mhell}  +   {\frac{1}{4}\|W\|^2_{\mhell} } + 4c h^{2r+2} \muJ^2 \|\uj\|^2_{\HH1(\Omega)}+   {\frac{1}{4} \|W\|_{\mhell}^2} \\
    & & \quad +  {\frac{c}{\epsilon_1^2} \muJ^2 h^{2r+2}\|\uj\|^2_{{\HH1\omgam}}} + \epsilon_1^2   \|W\|^2_{a_h^\ell} +  {\frac{c}{\epsilon_2^2} \muJ^2 h^{2r+1}\|\uj\|^2_{{\H2\omgam}}} +\epsilon_2^2 \frac{1}{\lbdi^2}   \|W\|^2_{a_h^\ell}\\
    & \le & c\muJ^2 \|\uj - \projG \uj\|^2_{ \mhell}  +   {\frac{1}{2}\|W\|^2_{\mhell}} +  {c\big(\frac{1}{\epsilon_1^2}+\frac{1}{\epsilon_2^2} \big) h^{2r+1} \muJ^2 \|\uj\|^2_{\H2\omgam}} \\
    & & \quad + (\epsilon_1^2  + \frac{\epsilon_2^2}{\lbdi^2})   \|W\|^2_{a_h^\ell}.
\end{eqnarray*}
Thus, we arrive at,
\begin{align}
    \label{ineq:mhell-norm-W}
    \|W\|^2_{\mhell} & \le c\muJ^2 \|\uj - \projG \uj\|^2_{ \mhell}   + c \muJ^2 h^{2r+1}\|\uj\|^2_{{\H2\omgam}} +   {(\epsilon_1^2  + \frac{\epsilon_2^2}{\lbdi^2}) \|W\|^2_{\ahell}}.
\end{align}
It remains to bound $  {\|W\|^2_{\ahell}}$.
%
%
%
%
%
%
%
%
%
%
\item To estimate the $\ahell$-norm of $W$, we first recall \eqref{eq:exp-W-ahell} and we use the geometric error estimates~\eqref{ineq:a-ahell} and \eqref{ineq:m-mhell} as follows,
\begin{multline*}
    \|W\|_{a_h^\ell}^2 \le c\lbdi \|\uj-\projG \uj \|_{\mhell} \| W\|_{\mhell} + \lbdi \| W\|_{\mhell}^2 + c \lbdi h^{r+1} \|\uj\|_{\HH1(\Omega)}\|W\|_{\HH1(\Omega)} \\
    + c h^r \|\nabla \uj\|_{\L2(B_h^\ell)} \|\nabla W \|_{\L2(B_h^\ell)} + ch^{r+1} \| \uj\|_{\HH1(\Gamma)} \| W \|_{\HH1(\Gamma)}.
\end{multline*}
Since $\uj$ belongs to $\H2\omgam$, the inequality \eqref{ineq:H1+h1/2} is applied as follows,
\begin{eqnarray*}
     \|W\|_{a_h^\ell}^2 & \le & c \lbdi \|\uj-\projG \uj \|_{\mhell} \| W\|_{\mhell} +  \lbdi \| W\|_{\mhell}^2 + c \lbdi h^{r+1} \|\uj\|_{\HH1(\Omega)}(\|W\|_{\mhell }+ {\|W\|_{\ahell }})\\
   & & \quad +  {c h^{r+1/2} \| \uj\|_{\H2\omgam} \| W \|_{a_h^\ell}+ch^{r+1}\| \uj\|_{\HH1(\Gamma)} \| W \|_{a_h^\ell}} \\
    &\le &c \lbdi \|\uj-\projG \uj \|_{\mhell} \| W\|_{\mhell} +  \lbdi \| W\|_{\mhell}^2 + c \lbdi h^{r+1} \|\uj\|_{\HH1(\Omega)}\|W\|_{\mhell }\\
    & & \quad + { c(1+\lbdi) h^{r+1/2}  \| \uj\|_{\H2\omgam} \| W \|_{a_h^\ell}}.
\end{eqnarray*}
Young's inequality is applied as follows,
%
%
%
%
\begin{multline*}
    \|W\|_{a_h^\ell}^2 \le 4c \lbdi \|\uj-\projG \uj \|^2_{\mhell}  {+ \frac{1}{4} \lbdi \| W\|^2_{\mhell} +  \lbdi \| W\|_{\mhell}^2} +  { 4c \lbdi^2 h^{2r+2} \|\uj\|^2_{\HH1(\Omega)}}\\ 
    + {\frac{1}{4}\|W\|^2_{\mhell}} 
    +   {4c (1+\lbdi)^2 h^{2r+1}  \| \uj\|^2_{\H2(\Omega) }} + \frac{1}{4}\|W\|^2_{a_h^\ell}.
\end{multline*}
Then, we deduce,
\begin{equation}
\label{ineq:ahell-norm-W}
    \|W\|_{a_h^\ell}^2 \le c \lbdi \|\uj-\projG \uj \|^2_{\mhell}  {+ c (1+\lbdi) \| W\|^2_{\mhell}}   + 
      {c h^{2r+1} (\lbdi^2 + (1+\lbdi)^2)  \| \uj\|^2_{\H2\omgam }}.
\end{equation}
Using the estimation \eqref{ineq:ahell-norm-W} in the inequality \eqref{ineq:mhell-norm-W}, we get,
\begin{eqnarray*}
    \|W\|^2_{\mhell} & \le & c\muJ^2 \|\uj - \projG \uj\|^2_{ \mhell}   + c \muJ^2 h^{2r+1}\|\uj\|^2_{{\H2\omgam}} +   {(\epsilon_1^2  + \frac{\epsilon_2^2}{\lbdi^2}) \|W\|^2_{\ahell}}\\
    & \le & c\muJ^2 \|\uj - \projG \uj\|^2_{ \mhell}   + c \muJ^2 h^{2r+1}\|\uj\|^2_{{\H2\omgam}} +   { c(\epsilon_1^2  + \frac{\epsilon_2^2}{\lbdi^2}) \lbdi \|\uj-\projG \uj \|^2_{\mhell}} \\
    & & \quad  { + c (\epsilon_1^2  + \frac{\epsilon_2^2}{\lbdi^2})(1+\lbdi) \| W\|^2_{\mhell}  + c h^{2r+1}(\epsilon_1^2  + \frac{\epsilon_2^2}{\lbdi^2})(\lbdi^2 + (1+\lbdi)^2)  \| \uj\|^2_{\H2\omgam }}\\ 
    & \le & {c\muJ^2 \|\uj - \projG \uj\|^2_{ \mhell}}   +   {c \muJ^2 h^{2r+1}\|\uj\|^2_{{\H2\omgam}}} +  { c(\lbdi  + \frac{1}{\lbdi}) \|\uj-\projG \uj \|^2_{\mhell}} \\
    & & \quad + c (\epsilon_1^2  + \frac{\epsilon_2^2}{\lbdi^2})(1+\lbdi) \| W\|^2_{\mhell} +  { c h^{2r+1}(1  + \frac{1}{\lbdi^2}) (\lbdi^2 + (1+\lbdi)^2)  \| \uj\|^2_{\H2\omgam }}.
\end{eqnarray*}
Taking $\epsilon_1 = \frac{1}{2}\sqrt{(\frac{1}{c(1+\lbdi)})}$ and $\epsilon_2 = \frac{\lbdi}{2}\sqrt{(\frac{1}{c(1+\lbdi)})}$, these quantities will satisfy the following inequality, $$1- (\epsilon_1^2  + \frac{\epsilon_2^2}{\lbdi^2})(1+\lbdi)>0.$$ 
Then we have,
\begin{align*}
    \|W\|^2_{\mhell} & \le  {\ci\|\uj - \projG \uj\|^2_{ \mhell}}   +   {c'_{\lbdi} h^{2r+1}\|\uj\|^2_{{\H2\omgam}}},
\end{align*}
where $\ci = c(\muJ^2+\lbdi+\frac{1}{\lbdi})$ and $c'_{\lbdi} = c(\muJ^2+(1  + \frac{1}{\lbdi^2}) (\lbdi^2 + (1+\lbdi)^2))$. To arrive to the inequality \eqref{ineq:W-mhell_au-carre}, we take the square root of the latter inequality.

%
%
%
%
%
%
%
%
%
%
%
\item Lastly we also need to estimate the $\ahell$ norm of $W$. We use the estimations~\eqref{ineq:ahell-norm-W} and \eqref{ineq:W-mhell_au-carre} as follows,
\begin{eqnarray*}
    \|W\|_{a_h^\ell}^2 
    & \le & c \lbdi \|\uj-\projG \uj \|^2_{\mhell}+ c (1+\lbdi) \| W\|^2_{\mhell}  + c h^{2r+1} (\lbdi^2 + (1+\lbdi)^2)  \| \uj\|^2_{\H2\omgam } \\ 
    & \le & c\lbdi \|\uj-\projG \uj \|^2_{\mhell} 
     + c(\lbdi^2 + (1+\lbdi)^2) h^{2r+1}  \| \uj\|^2_{\H2(\Omega) } \\
    & & \quad + c(\lbdi + 1) \bigg(\ci\|\uj - \projG \uj\|^2_{ \mhell}   + c'_{\lbdi} h^{2r+1}\|\uj\|^2_{{\H2\omgam}} \bigg).
\end{eqnarray*}
Consequently we arrive at the desired result by taking its square root,
\begin{equation*}
     \|W\|_{a_h^\ell}
     \le C_{\lbdi}\|\uj - \projG \uj\|_{ \mhell} + C'_{\lbdi} h^{r+1/2}\|\uj\|_{{\H2\omgam}},
\end{equation*}
where $C_{\lbdi}=\sqrt{ c \big( \lbdi + (\lbdi + 1)\ci\big)}$ and $C'_{\lbdi}=\sqrt{c \big(\lbdi^2 + (1+\lbdi)^2 + (\lbdi + 1)c'_{\lbdi}\big)}$.
\end{enumerate}
\end{proof}
%
%
%
%
%
%
%
%
%
%
%
\begin{remarque}
In this work, the function $W= \projG \uj- \ProgZj \uj$ being a linear combination of lifted discrete eigenfunctions is in the lifted finite element space $\Vhlifte$, which is a subspace of $\HH1\omgam$, therefore $W$ is not necessarily in $\H2 (\Omega)$. However if, by considering other finite element method like Hermite, $W$ will be in $\H2 \omgam$, then the inequality~\eqref{ineq:W-mhell_au-carre} may be improved as follows,
    \begin{equation*}
        \|W\|^2_{\mhell} \le \ci \|\uj - \projG \uj\|^2_{ \mhell}   + \ci h^{2r+2}\|\uj\|^2_{{\H2\omgam}}.
    \end{equation*}
    This may lead to a higher geometric error rate in the final error estimation for the~$\L2$ norm. However, notice that this conjecture should be checked carefully (but this is not the topic of the present paper).
\end{remarque}

The last step would be to combine all the previous results to estimate the eigenfunctions. 
%
%
%
%
\paragraph{Proof of Theorem \ref{th-error-bound-eigenfunction}: the estimates \eqref{err:eigenvectors-L2} and \eqref{err:eigenvectors-H1}.}
%
%
To prove \eqref{err:eigenvectors-H1}, we start by adding and subtracting $\projG \uj $ as follows,
$$
    \|\uj - \ProgZj \uj \|_{a_h^\ell}  \le \|\uj -\projG \uj \|_{a_h^\ell} + \|  \projG \uj- \ProgZj \uj\|_{a_h^\ell}  = \|\uj -\projG \uj \|_{a_h^\ell} + \|W\|_{a_h^\ell}.
$$
The latter inequality is obtained by definition of $W =  \projG \uj- \ProgZj \uj$. Applying respectively \eqref{ineq:W-ahell_au-carre}, \eqref{ineq:Gellu-u-mhell} and \eqref{ineq:Gellu-u-ahell}, we get,
\begin{align*}
     \|\uj - \ProgZj \uj \|_{a_h^\ell} & \le c\|\uj -\projG \uj \|_{a_h^\ell}  + C_{\lbdi} \|\uj -\projG \uj  \|_{\mhell}+C'_{\lbdi} h^{r+1/2}\|\uj \|_{{\H2\omgam}}  \\
     & \le \ci ( h^k + h^{r+1/2}).
\end{align*}
By the norm equivalence between $\| \cdot \|_{\ahell}$ and $\|\cdot \|_{\HH1 \omgam}$, the latter inequality leads to \eqref{err:eigenvectors-H1}.

Since $\ProjHj$ is the orthogonal projection with respect to $\mhell$ onto $\Fhlift$, then $\ProjHj \uj $ is the closest point to~$\uj $ with respect to the $\mhell$-norm. Since $\ProgZj =\ProjHj \circ \projG$ as mentioned in Remark \ref{rem:express-Zlambdaj}, we have, 
$$
    \|\uj  - \ProjHj \uj  \|_{\mhell} \le \|\uj  -  \ProgZj \uj  \|_{\mhell}  \le \|\uj  - \projG \uj  \|_{\mhell} +  \|\projG \uj  -  \ProgZj \uj  \|_{\mhell}.
$$
We apply \eqref{ineq:W-mhell_au-carre} and \eqref{ineq:Gellu-u-mhell} respectively to conclude,
$$
    \|\uj  - \ProjHj \uj  \|_{\mhell} \le c_{\lbdi} \|\uj  - \projG \uj \|_{ \mhell}   + c'_{\lbdi} h^{r+1/2}\|\uj \|_{{\H2\omgam}} \le \ci (  h^{k+1} + h^{r+1/2}).
$$
By the norm equivalence between $\| \cdot \|_{\mhell}$ and $\|\cdot \|_{\L2(\Omega)}$, the latter inequality leads to \eqref{err:eigenvectors-L2}.
%
%
%
%
%
%
%
%
%
%
%
%
%

\subsection{Eigenvalue error estimate}
 \label{sec:error-estimation-eigenvalue-2}
We recall that $\lbdi$ is an exact eigenvalue of multiplicity $N$ of Problem~\eqref{fv_faible}, such that $\lbdj=\lbdi$, for any $j\in \J=\{i,...,i+N-1\}$. In order to improve the preliminary eigenvalue error estimation~\eqref{err:eigenvalue_initial}, we introduce $ \projPj  : \Vhlifte \to \F $ the orthogonal projection with respect to $m$ onto the space $\F$,  such that for all~$v\in \Vhlifte$,
\begin{equation*}
  m(\projPj v ,t) = m(v,t), \ \ \ \forall \ t \in \F .
\end{equation*}

The idea of the following lemma can be found in \cite[Lemma 2.3]{Babuska-osborn-87} and \cite[Lemma 3.1]{Babuska-osborn-89}. However the main difference here is that we need to take into consideration the geometric error (see \cite[Lemma 6.1]{D4} and \cite[Lemma 5.1]{5-th5.1}).
\begin{lem}[eigenvalue bound] 
\label{lem:eigenval_bound}
Let $U_j^\ell$ be a discrete eigenfuntion in $\Fhlift$ associated to $\lbdhj$ such that $\| U_j^\ell \|_{m}=~1$. Thus, the following inequality holds,
\begin{equation}
\label{lamda-lambdah}
    |\lbdj-\lbdhj| \le \|  \projPj U_j^\ell -U_j^\ell \|_a^2 + \lbdj \| \projPj U_j^\ell -U_j^\ell\|_m^2 +|a_h^\ell-a|(U_j^\ell,U_j^\ell) +\lbdhj|\mhell-m|(U_j^\ell,U_j^\ell).
\end{equation}
\end{lem}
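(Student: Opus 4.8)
The plan is to estimate the eigenvalue error through a Rayleigh-quotient identity relating $\lbdhj$ to the exact problem via the projection $\projPj$, closely following the argument of \cite[Lemma 6.1]{D4} but carrying the extra geometric-error terms $|a_h^\ell - a|$ and $|\mhell - m|$ throughout. First I would start from the fact that $U_j^\ell$ is a discrete eigenfunction, so $\ahell(U_j^\ell, V) = \lbdhj \, \mhell(U_j^\ell, V)$ for all $V \in \Vhlifte$, and that $\|U_j^\ell\|_m = 1$. The key algebraic step is to expand the quantity $a(\projPj U_j^\ell - U_j^\ell, \projPj U_j^\ell - U_j^\ell) - \lbdj \, m(\projPj U_j^\ell - U_j^\ell, \projPj U_j^\ell - U_j^\ell)$ using bilinearity; since $\projPj U_j^\ell \in \F$ and each element of $\F$ is an exact eigenfunction for $\lbdi = \lbdj$, the cross terms $a(\projPj U_j^\ell, \cdot)$ and $m(\projPj U_j^\ell, \cdot)$ simplify (the defining property of $\projPj$ makes $m(\projPj U_j^\ell - U_j^\ell, t) = 0$ for $t \in \F$, and $a(t, \cdot) = \lbdj m(t, \cdot)$ on $\F$), so a number of terms cancel.

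\textbf{Key steps.}
After the cancellations I expect to land on an identity of the shape
\begin{equation*}
    a(\projPj U_j^\ell - U_j^\ell, \projPj U_j^\ell - U_j^\ell) - \lbdj \, m(\projPj U_j^\ell - U_j^\ell, \projPj U_j^\ell - U_j^\ell) = a(U_j^\ell, U_j^\ell) - \lbdj \, m(U_j^\ell, U_j^\ell),
\end{equation*}
using $\|U_j^\ell\|_m = 1$ to write $\lbdj \, m(U_j^\ell, U_j^\ell) = \lbdj$. The next step is to replace $a(U_j^\ell, U_j^\ell)$ and $m(U_j^\ell, U_j^\ell)$ by their discrete counterparts: write $a(U_j^\ell,U_j^\ell) = \ahell(U_j^\ell,U_j^\ell) + (a - \ahell)(U_j^\ell,U_j^\ell)$ and similarly $m(U_j^\ell,U_j^\ell) = \mhell(U_j^\ell,U_j^\ell) + (m - \mhell)(U_j^\ell,U_j^\ell)$. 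Since $U_j^\ell$ is a discrete eigenfunction, $\ahell(U_j^\ell,U_j^\ell) = \lbdhj \, \mhell(U_j^\ell,U_j^\ell)$, so the main discrete terms collapse to $(\lbdhj - \lbdj)\mhell(U_j^\ell,U_j^\ell)$ plus the two geometric-error remainders. Then I would rearrange to isolate $\lbdhj - \lbdj$, bound $\mhell(U_j^\ell,U_j^\ell)$ from below (it is close to $m(U_j^\ell,U_j^\ell) = 1$ by Corollary~\ref{coro:norm-equiv-with-h-r}, hence bounded below by a positive constant for small $h$), and take absolute values, using that $a(\projPj U_j^\ell - U_j^\ell, \cdot)$ is a positive quadratic form and $\lbdj > 0$. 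This yields exactly \eqref{lamda-lambdah}, with the terms $\|\projPj U_j^\ell - U_j^\ell\|_a^2$, $\lbdj \|\projPj U_j^\ell - U_j^\ell\|_m^2$, $|a_h^\ell - a|(U_j^\ell,U_j^\ell)$ and $\lbdhj |\mhell - m|(U_j^\ell,U_j^\ell)$ appearing as claimed.

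\textbf{Main obstacle.}
The delicate point is the bookkeeping in the cancellation: one must carefully use that \emph{every} element of $\F$ — in particular $\projPj U_j^\ell$ — is an exact eigenfunction with the \emph{same} eigenvalue $\lbdj$, so that $a(\projPj U_j^\ell, w) = \lbdj m(\projPj U_j^\ell, w)$ for all $w \in \HH1\omgam$, not merely for $w \in \F$; combined with the orthogonality $m(\projPj U_j^\ell - U_j^\ell, t) = 0$ on $\F$, this is what collapses the cross terms. A secondary subtlety is that the normalization $\|U_j^\ell\|_m = 1$ is used (rather than $\|U_j^\ell\|_{\mhell} = 1$), so one should not accidentally conflate the two normalizations; the discrepancy between them is precisely what feeds the $\lbdhj|\mhell - m|(U_j^\ell,U_j^\ell)$ term. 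No estimate of the right-hand side is needed here — that is deferred to the subsequent use of the lemma, where $\|\projPj U_j^\ell - U_j^\ell\|$ will be controlled by the eigenfunction estimates \eqref{err:eigenvectors-L2}--\eqref{err:eigenvectors-H1} and the geometric terms by \eqref{ineq:a-ahell}--\eqref{ineq:m-mhell}.
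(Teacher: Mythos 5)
Your strategy is essentially the paper's: expand $\|\projPj U_j^\ell-U_j^\ell\|_a^2-\lbdj\|\projPj U_j^\ell-U_j^\ell\|_m^2$, use that $\projPj U_j^\ell\in\F$ satisfies $a(\projPj U_j^\ell,v)=\lbdj m(\projPj U_j^\ell,v)$ for all $v\in\HH1\omgam$ so the cross and pure terms cancel, land on $\|U_j^\ell\|_a^2-\lbdj\|U_j^\ell\|_m^2=\|U_j^\ell\|_a^2-\lbdj$, and then trade $a$ for $\ahell$ and $m$ for $\mhell$ using the discrete eigen-relation. All of that is correct (and, as a side remark, the $m$-orthogonality of $\projPj U_j^\ell-U_j^\ell$ to $\F$ is not actually needed for the cancellation — any element of $\F$ would do in place of $\projPj U_j^\ell$; the specific choice of $\projPj$ only matters later when the right-hand side is estimated).

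The one place where your endgame deviates from the statement is the final rearrangement. Grouping $\lbdj\,m(U_j^\ell,U_j^\ell)$ with $\lbdj\,\mhell(U_j^\ell,U_j^\ell)$ and then isolating $(\lbdhj-\lbdj)\,\mhell(U_j^\ell,U_j^\ell)$ forces you to divide by $\mhell(U_j^\ell,U_j^\ell)$ and leaves $\lbdj|m-\mhell|(U_j^\ell,U_j^\ell)$, not $\lbdhj|\mhell-m|(U_j^\ell,U_j^\ell)$; the resulting bound carries a prefactor $1/\mhell(U_j^\ell,U_j^\ell)=1+O(h^{r+1})$ and is therefore not literally \eqref{lamda-lambdah}, contrary to your claim that it "yields exactly" it. (It is of course sufficient for the eventual use in Theorem~\ref{th-error-bound-eigenfunction}, since $\mhell(U_j^\ell,U_j^\ell)$ is bounded below for small $h$.) The paper avoids the division by exploiting the normalization $m(U_j^\ell,U_j^\ell)=1$ a second time: after writing
\begin{equation*}
-\lbdj=\|\projPj U_j^\ell-U_j^\ell\|_a^2-\lbdj\|\projPj U_j^\ell-U_j^\ell\|_m^2+(\ahell-a)(U_j^\ell,U_j^\ell)-\lbdhj\,\mhell(U_j^\ell,U_j^\ell),
\end{equation*}
it adds $\lbdhj\,m(U_j^\ell,U_j^\ell)=\lbdhj$ to both sides, so that the last two terms combine into $\lbdhj(m-\mhell)(U_j^\ell,U_j^\ell)$ and the left side becomes $\lbdhj-\lbdj$ with no denominator. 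With that one-line adjustment your argument reproduces the lemma exactly.
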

\begin{proof}
First of all, we need to notice that $ \projPj U_j^\ell $ is in $\F$, 
thus,
\begin{equation}
\label{eq:a=lambdai.m}
    a( \projPj U_j^\ell , v ) = \lbdj m( \projPj U_j^\ell,v), \  \ \ \forall \ v \in \HH1\omgam.
\end{equation}
Taking $v = U_j^\ell \in \HH1\omgam$ in  \eqref{eq:a=lambdai.m}, we have,
\begin{equation*}
\label{star1}
    a( \projPj U_j^\ell , U_j^\ell ) = \lbdj m( \projPj U_j^\ell, U_j^\ell ).
\end{equation*}
Afterwards, taking $v = \projPj  U_j^\ell \in \HH1\omgam$  in  \eqref{eq:a=lambdai.m}, we get,
\begin{equation*}
\label{star2}
    \| \projPj U_j^\ell \|^2_{a} = \lbdj \| \projPj U_j^\ell\|_m^2.
\end{equation*}
Applying the latter two equations in the following estimation, we get,
\begin{multline*}
     \| \projPj U_j^\ell-U_j^\ell \|^2_{a} - \lbdj \| \projPj U_j^\ell - U_j^\ell\|_m^2 \\
\begin{array}{rcl}
    & = & \| \projPj U_j^\ell \|^2_{a} + \|U_j^\ell \|^2_{a} -2a(U_j^\ell, \projPj U_j^\ell) - \lbdj \|U_j^\ell\|_m^2 - \lbdj \| \projPj U_j^\ell \|_m^2 +2\lbdj m(U_j^\ell, \projPj U_j^\ell)\\
    & = &  \|U_j^\ell \|^2_{a}  - \lbdj \|U_j^\ell\|_m^2.
\end{array}
\end{multline*}
Since $\|U_j^\ell\|_m=1$, we have, 
\begin{equation*}
    - \lbdj =   \| \projPj U_j^\ell-U_j^\ell \|^2_{a} - \lbdj \| \projPj U_j^\ell - U_j^\ell\|_m^2 - \|U_j^\ell \|^2_{a}.
\end{equation*}
Keeping in mind that $\ahell(U_j^\ell, U_j^\ell) = \lbdhj \ \mhell(U_j^\ell, U_j^\ell) $, we get by adding and subtracting $\ahell(\uhjlifte,\uhjlifte)$,
\begin{eqnarray*}
    - \lbdj & = &  \| \projPj U_j^\ell-U_j^\ell \|^2_{a} - \lbdj \| \projPj U_j^\ell - U_j^\ell\|_m^2 - a(U_j^\ell, U_j^\ell) + \ahell(U_j^\ell, U_j^\ell) -  \lbdhj \ \mhell(U_j^\ell, U_j^\ell)\\
    &  = & \| \projPj U_j^\ell-U_j^\ell \|^2_{a} - \lbdj \| \projPj U_j^\ell - U_j^\ell\|_m^2 + (\ahell-a)(U_j^\ell, U_j^\ell)  -  \lbdhj \ \mhell(U_j^\ell, U_j^\ell).
\end{eqnarray*}
Since $m(U_j^\ell, U_j^\ell)=1$, then by adding $\lbdhj m(U_j^\ell, U_j^\ell)$ to each side of this equation, we have,
\begin{equation*}
   \lbdhj - \lbdj =   \| \projPj U_j^\ell-U_j^\ell \|^2_{a} - \lbdj \| \projPj U_j^\ell - U_j^\ell\|_m^2 + (\ahell-a)(U_j^\ell, U_j^\ell) +  \lbdhj (m-\mhell) (U_j^\ell, U_j^\ell).
\end{equation*}
By taking the absolute value of the latter equation and bounding it, we get~\eqref{lamda-lambdah}.
%
\end{proof}
The proofs of the following lemma and corollary are analogous to the proofs of \cite[Lemma~4.4 - Proposition~4.5]{D4}, which were given for a surface problem. For readers convenience, we will detail these proofs, and we recall that there exists $\ci>0$, such that $0<\Lambda_{j}  \le  \ci$, for all $j\in \J$. 
\begin{lem}
\label{lem:bound_Pmhell}
Following the assumption in Lemma \ref{lem:eigenval_bound}, there exists $\ci>0$ such that,
\begin{equation}
\label{ineq:H}
    \|\ProjHj v \|_{a_h^\ell} \le {\ci} \|v\|_{\mhell}, \quad \forall \, v \in \HH1 \omgam,
\end{equation}
where $\ProjHj$ is the orthogonal projection with respect to $\mhell$ onto $\Fhlift$, given in Definition \ref{def:projections}.
\end{lem}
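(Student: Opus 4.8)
The claim is a uniform bound $\|\ProjHj v\|_{\ahell} \le \ci \|v\|_{\mhell}$ on a finite dimensional space, so the natural route is to exploit that $\ProjHj v$ lives in $\Fhlift$, which is spanned by the discrete eigenfunctions $\{\uhplifte\}_{p\in\J}$ associated to the finitely many discrete eigenvalues $\{\lbdhp\}_{p\in\J}$, all of which are bounded above by a mesh-independent constant $\ci$ (as recalled just before the lemma, and established in Remark~\ref{exp:muJ}). The key algebraic fact is that on $\Fhlift$ the $\ahell$-norm and the $\mhell$-norm are comparable with constant $\sqrt{\max_{p\in\J}\lbdhp}$, because each $\uhplifte$ satisfies $\ahell(\uhplifte,\uhplifte) = \lbdhp\,\mhell(\uhplifte,\uhplifte)$ and, crucially, the $\{\uhplifte\}$ are simultaneously $\ahell$-orthogonal and $\mhell$-orthogonal.

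First I would write $w := \ProjHj v \in \Fhlift$, expand $w = \sum_{p\in\J}\gamma_p\,\uhplifte$ in the discrete eigenbasis, and use the double orthogonality to get $\|w\|_{\ahell}^2 = \sum_{p\in\J}\lbdhp\,\gamma_p^2\,\|\uhplifte\|_{\mhell}^2$ and $\|w\|_{\mhell}^2 = \sum_{p\in\J}\gamma_p^2\,\|\uhplifte\|_{\mhell}^2$ (recall the normalization $\|\uhplifte\|_{\mhell}=1$ anyway). Hence $\|w\|_{\ahell}^2 \le \big(\max_{p\in\J}\lbdhp\big)\,\|w\|_{\mhell}^2 \le \ci\,\|w\|_{\mhell}^2$, using $0 < \lbdhp \le \ci$ for $p\in\J$. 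Then I would invoke that $\ProjHj$ is the $\mhell$-orthogonal projection onto $\Fhlift$, so it is a contraction in the $\mhell$-norm: $\|\ProjHj v\|_{\mhell} \le \|v\|_{\mhell}$ for all $v \in \HH1\omgam$. Chaining these two inequalities gives $\|\ProjHj v\|_{\ahell} \le \sqrt{\ci}\,\|\ProjHj v\|_{\mhell} \le \sqrt{\ci}\,\|v\|_{\mhell}$, which is the claim after absorbing $\sqrt{\ci}$ into $\ci$.

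I do not expect a serious obstacle here; the statement is essentially bookkeeping once one has (i) the uniform upper bound on the discrete eigenvalues $\{\lbdhp\}_{p\in\J}$, which is quoted, and (ii) the spectral decomposition of $\Fhlift$ with simultaneous orthogonality in both inner products, which follows from the discrete generalized eigenvalue problem \eqref{fvdlifte} exactly as in the standard symmetric eigenproblem theory (\cite[Lemma 7.4.1]{GA}). The only point requiring a line of care is making sure the constant is genuinely mesh-independent: this is precisely why Remark~\ref{exp:muJ} is needed, since without the uniform bound $\lbdhp \le \ci$ the comparison constant between $\|\cdot\|_{\ahell}$ and $\|\cdot\|_{\mhell}$ on $\Fhlift$ could a priori blow up with $h$. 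With that in hand, the proof is two estimates spliced together.
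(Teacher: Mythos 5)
Your proposal is correct and follows essentially the same route as the paper: expand $\ProjHj v$ in the discrete eigenbasis of $\Fhlift$, use $\ahell(\uhplifte,\cdot)=\lbdhp\,\mhell(\uhplifte,\cdot)$ together with the mesh-independent bound $\lbdhp\le\ci$ to compare $\|\cdot\|_{\ahell}$ and $\|\cdot\|_{\mhell}$ on $\Fhlift$, then conclude by the $\mhell$-contractivity of the orthogonal projection. The only cosmetic difference is that you make the simultaneous $\ahell$/$\mhell$-orthogonality explicit, whereas the paper uses it implicitly when reducing $\sum_p\beta_p\,\mhell(\uhplifte,\ProjHj v)$ to $\|\ProjHj v\|_{\mhell}^2$.
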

\begin{proof}
Notice that $\ProjHj v\in \Fhlift = \underset{j \in \J}{\oplus} \Ehjlift $, then there exists constants $\beta_j \in \R$ for $j\in \J$ such that~$\ProjHj v = \sum_{j \in \Ji} \beta_jU_j^\ell.$ One can estimate its norm as follows,
\begin{multline*}
    \|\ProjHj v  \|_{a_h^\ell}^2  = a_h^\ell(\ProjHj v ,\ProjHj v )
     = a_h^\ell(\sum_{j \in \Ji} \beta_jU_j^\ell ,\ProjHj v )  \\
     = \sum_{j \in \Ji} \beta_j a_h^\ell(U_j^\ell ,\ProjHj v ) 
      = \sum_{j \in \Ji} \beta_j \lbdhj \mhell(U_j^\ell ,\ProjHj v).
\end{multline*}
Since $0<\lbdhj \le \ci$, for all $j \in \Ji$, we have, 
\begin{align*} 
    \|\ProjHj v \|_{a_h^\ell}^2 & \le \ci \sum_{j \in \Ji} \beta_j\mhell(U_j^\ell ,\ProjHj v ) 
     = \ci  \mhell( \ProjHj v, \ProjHj v ) = \ci   \|\ProjHj v \|_{\mhell}^2.
\end{align*}
Finally, by definition of the orthogonal projection $\ProjHj$, we conclude the proof as follows,
$$
  \|\ProjHj v  \|_{a_h^\ell}^2 \le \ci \|v\|_{\mhell}^2.
$$
\end{proof}
%
%
%
%
\begin{coro}
Following the assumptions of lemma \ref{lem:bound_Pmhell}, this inequality holds for any exact eigenfunction $\uj$ associated to $\lbdi$, there exists $\ci>0$ such that,
%
%
%
%
\begin{equation}
    \label{ineq:norm(u-Hu)-Z-G}
    \|\uj-\ProjHj \uj \|_{a_h^\ell} \le  \|\uj-\ProgZj \uj\|_{a_h^\ell} +\ci \|\uj-\projG \uj \|_{\mhell},
\end{equation}
where $\projG$ is the orthogonal projection with respect to $\ahell$ onto $\Vhlifte$ and $\ProgZj$ is the orthogonal projection with respect to $\ahell$ onto $\Fhlift$, given in Definition \ref{def:projections}.
\end{coro}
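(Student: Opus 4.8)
The plan is to relate the quantity $\|\uj - \ProjHj\uj\|_{\ahell}$ to the already-estimated quantity $\|\uj - \ProgZj\uj\|_{\ahell}$ by inserting $\ProgZj\uj$ and using the triangle inequality:
\[
    \|\uj - \ProjHj\uj\|_{\ahell} \le \|\uj - \ProgZj\uj\|_{\ahell} + \|\ProgZj\uj - \ProjHj\uj\|_{\ahell}.
\]
The entire content is then to bound the second term on the right by $\ci\|\uj - \projG\uj\|_{\mhell}$. First I would observe, using the identity $\ProgZj = \ProjHj\circ\projG$ from Remark~\ref{rem:express-Zlambdaj}, that
\[
    \ProgZj\uj - \ProjHj\uj = \ProjHj(\projG\uj) - \ProjHj\uj = \ProjHj(\projG\uj - \uj),
\]
where the last equality uses linearity of the projection $\ProjHj$.

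Next I would apply Lemma~\ref{lem:bound_Pmhell} (inequality~\eqref{ineq:H}) with the choice $v := \projG\uj - \uj \in \HH1\omgam$. This immediately gives
\[
    \|\ProjHj(\projG\uj - \uj)\|_{\ahell} \le \ci \|\projG\uj - \uj\|_{\mhell} = \ci \|\uj - \projG\uj\|_{\mhell}.
\]
Combining this with the triangle inequality above yields exactly~\eqref{ineq:norm(u-Hu)-Z-G}. The argument is essentially a two-line manipulation once the projection identity $\ProgZj = \ProjHj\circ\projG$ and the boundedness estimate~\eqref{ineq:H} are in hand, so there is no real obstacle; the one point requiring minor care is confirming that $\projG\uj - \uj$ indeed lies in $\HH1\omgam$ (which holds since both $\uj \in \Hk1\omgam \subset \HH1\omgam$ and $\projG\uj \in \Vhlifte \subset \HH1\omgam$) so that Lemma~\ref{lem:bound_Pmhell} applies. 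This corollary will then feed into the final eigenvalue estimate: together with~\eqref{ineq:W-ahell_au-carre}, \eqref{ineq:Gellu-u-ahell} and~\eqref{ineq:Gellu-u-mhell}, it controls the terms $\|\projPj U_j^\ell - U_j^\ell\|_a$ and $\|\projPj U_j^\ell - U_j^\ell\|_m$ appearing in Lemma~\ref{lem:eigenval_bound}, allowing the preliminary bound~\eqref{err:eigenvalue_initial} with its $h^r$ geometric term to be sharpened to the $h^{2k} + h^{r+1}$ stated in~\eqref{err:eigenvalue}.
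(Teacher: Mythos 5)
Your proof is correct and follows exactly the same route as the paper: insert $\ProgZj\uj$ via the triangle inequality, rewrite $\ProgZj\uj-\ProjHj\uj=\ProjHj(\projG\uj-\uj)$ using $\ProgZj=\ProjHj\circ\projG$ and linearity, then apply the bound \eqref{ineq:H} from Lemma~\ref{lem:bound_Pmhell}. No gaps; the remark about $\projG\uj-\uj\in\HH1\omgam$ is a sensible (if minor) point of care.
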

\begin{proof}
By adding and subtracting $\ProgZj \uj$, we have,
\begin{equation*}
    \|\uj-\ProjHj \uj\|_{a_h^\ell} \le \|\uj-\ProgZj \uj\|_{a_h^\ell} + \|\ProgZj \uj -\ProjHj \uj \|_{a_h^\ell}.
\end{equation*}
Since $\ProgZj  = \ProjHj \circ\projG$, we get, 
\begin{align*}
    \|\uj-\ProjHj \uj \|_{a_h^\ell} & \le \|\uj-\ProgZj \uj \|_{a_h^\ell} + \|\ProjHj \circ \projG \uj-\ProjHj \uj\|_{a_h^\ell}\\
    & = \|\uj-\ProgZj \uj \|_{a_h^\ell} + \|\ProjHj (\projG \uj -\uj)\|_{a_h^\ell}.
\end{align*}
To sum up, we apply \eqref{ineq:H} to arrive at \eqref{ineq:norm(u-Hu)-Z-G}.
\end{proof}
The error between a discrete eigenfunction and its projection onto the space spanned by the exact eigenfunctions is estimated in the following lemmas using~$\projPj$ the orthogonal projection with respect to $m$ onto the space $\F$. 

By \cite[Lemma 5.1]{23}, for a sufficiently small $h$, $\{ \ProjHj \up, p \in \J \}$ forms a basis for~$\Fhlift $. Since $\uhjlifte \in \Fhlift= {\rm span}\{ \ProjHj \up, p \in \J  \}$, it can be written as follows,
\begin{equation}
\label{eq:Uell}
    \uhjlifte  = \sum_{p \in \J} \alpha_p\ProjHj \up.
\end{equation}
Indeed, this can be traced back to the lower semicontinuity of the rank application and the fact that $\ProjHj \up$ tends to $\up$ as $h$ tends to $0$, for all $p\in \J$.
\begin{lem}
Let $\uhj$ be a  discrete eigenfunction associated to $\lbdhj$, such that~$\|\uhjlifte\|_m =1$. Then, we have,
\begin{equation}
\label{eq:Plbdu-u}
    \projPj \uhjlifte  - \uhjlifte  = \sum_{p\in \J} \alpha_p \Bigg[\sum_{t\in \J}  m(\ProjHj \up -\up ,u_t)u_t  +  (\up-\ProjHj \up) \Bigg],
\end{equation}
where $\{ \up \}_{p \in \Ji}$ denotes an orthonormal basis of $\F$ with respect to $m$ (thus made of exact eigenfunctions associated to~$\lbdi$).
\end{lem}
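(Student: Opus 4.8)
The plan is to start from the expansion \eqref{eq:Uell}, namely $\uhjlifte = \sum_{p \in \J} \alpha_p \ProjHj \up$, and to project it onto $\F$ using $\projPj$. Since each $\up \in \F$ is fixed by $\projPj$, one has $\projPj \uhjlifte = \sum_{p\in\J}\alpha_p\,\projPj(\ProjHj\up)$, so the difference $\projPj\uhjlifte - \uhjlifte = \sum_{p\in\J}\alpha_p\big(\projPj(\ProjHj\up) - \ProjHj\up\big)$. The first step is therefore to compute $\projPj(\ProjHj\up) - \ProjHj\up$ for a single index $p\in\J$, and then sum against the $\alpha_p$'s.

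To handle a single term, I would write $\ProjHj\up - \up = (\ProjHj\up - \up)$ and split $\projPj(\ProjHj\up) - \ProjHj\up = \big(\projPj(\ProjHj\up) - \projPj\up\big) + \big(\projPj\up - \up\big) + \big(\up - \ProjHj\up\big)$. Because $\up \in \F$, the middle term $\projPj\up - \up$ vanishes. For the first term, $\projPj(\ProjHj\up - \up)$ is the $m$-orthogonal projection onto $\F$ of the vector $\ProjHj\up - \up$; since $\{u_t\}_{t\in\J}$ is an $m$-orthonormal basis of $\F$, this projection is exactly $\sum_{t\in\J} m(\ProjHj\up - \up,\, u_t)\, u_t$. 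The third term is simply $\up - \ProjHj\up$. Combining the three pieces gives
\begin{equation*}
  \projPj(\ProjHj\up) - \ProjHj\up = \sum_{t\in\J} m(\ProjHj\up - \up,\, u_t)\, u_t + (\up - \ProjHj\up),
\end{equation*}
and multiplying by $\alpha_p$ and summing over $p\in\J$ yields precisely \eqref{eq:Plbdu-u}.

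The argument is essentially bookkeeping once one recalls two facts: that $\projPj$ acts as the identity on $\F$ (so $\up$ and each $u_t$ are fixed), and that the $m$-orthogonal projection onto $\F$ of any element $v$ is $\sum_{t\in\J} m(v,u_t)u_t$ in the $m$-orthonormal basis $\{u_t\}$. The only point requiring a word of care is the legitimacy of the expansion \eqref{eq:Uell} itself, i.e.\ that $\{\ProjHj\up\}_{p\in\J}$ is a basis of $\Fhlift$ for $h$ small enough; this is granted by \cite[Lemma 5.1]{23} (invoked just above the statement) via the lower semicontinuity of the rank map and the convergence $\ProjHj\up \to \up$ as $h\to 0$. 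So I do not anticipate a genuine obstacle here; the main subtlety is just to be careful that the decomposition of $\projPj(\ProjHj\up)-\ProjHj\up$ into three pieces telescopes correctly and that the normalization $\|\uhjlifte\|_m = 1$ (needed later, not in this identity) is merely carried along. The identity \eqref{eq:Plbdu-u} is the algebraic skeleton that will feed into the bound \eqref{lamda-lambdah} of Lemma \ref{lem:eigenval_bound}, since it exhibits $\projPj\uhjlifte - \uhjlifte$ as a linear combination of the small quantities $\ProjHj\up - \up$, each of which is controlled by the eigenfunction estimates \eqref{ineq:norm(u-Hu)-Z-G}, \eqref{err:eigenvectors-L2} and \eqref{err:eigenvectors-H1}.
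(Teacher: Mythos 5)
Your proposal is correct and follows essentially the same route as the paper: both start from the expansion \eqref{eq:Uell}, use the explicit formula $\projPj v=\sum_{t\in\J}m(v,u_t)u_t$ in the $m$-orthonormal basis of $\F$, and exploit $m(\up,u_t)=\delta_{pt}$; your per-index telescoping is just a reorganization of the paper's global add-and-subtract of the zero term $\sum_p\alpha_p\big(m(\up,\up)\up-\up\big)$. The only cosmetic difference is that you apply $\projPj$ directly to $\up\in\F$ (which requires the harmless extension of $\projPj$ beyond its stated domain $\Vhlifte$), whereas the paper sidesteps this by working with the coefficients $m(\up,u_t)$.
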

\begin{proof}
We will proceed as in \cite[lem 6.2 - 6.3]{D4}. 
We need to keep in mind that $\projPj$ is the orthogonal projection with respect to $m$ on $\F$. This implies that $\projPj \uhjlifte$ can be written as follows,
\begin{equation}
\label{exp:P_m-Ujell}
    \projPj \uhjlifte   = \sum_{t\in \J} m(\uhjlifte  ,u_t)u_t \in \F .
\end{equation}
Subtracting \eqref{eq:Uell} from the latter equation \eqref{exp:P_m-Ujell}, we get,
\begin{equation}
\label{U-PU}
    \projPj \uhjlifte   - \uhjlifte   = \sum_{t\in \J} m( \sum_{p\in \J} \alpha_p\ProjHj \up ,u_t)u_t - \sum_{p\in \J} \alpha_p\ProjHj \up.
\end{equation}
Since $m(\up,u_t)=\delta_{pt}$ for all $p, t \in \J$, we have,   
\begin{equation*}
  -  \sum_{p\in \J} \alpha_p m(\up,\up)\up + \sum_{p\in \J} \alpha_p\up =0.
\end{equation*}
Inserting this in \eqref{U-PU}, we get,
\begin{eqnarray*}
    \projPj \uhjlifte   - \uhjlifte   & = & \sum_{t\in \J} m( \sum_{p\in \J} \alpha_p\ProjHj \up ,u_t)u_t -  \sum_{p\in \J} \alpha_p m(\up,\up)\up +  \sum_{p\in \J} \alpha_p(\up-\ProjHj \up ) \\
    & = & \sum_{t\in \J} \sum_{p\in \J} \alpha_p m(\ProjHj \up  -\up ,u_t)u_t  +  \sum_{p\in \J} \alpha_p(\up-\ProjHj \up ) \\
    & = & \sum_{p\in \J}\alpha_p \Bigg[\sum_{t\in \J}  m(\ProjHj \up  -\up ,u_t)u_t  +  (\up-\ProjHj \up ) \Bigg].
\end{eqnarray*}
\end{proof}
\begin{lem}
\label{lem-equiv-eigenvector-estimations}
Let $\uhj$ be an eigenfunction associated to $\lbdhj$ such that~$\|\uhjlifte\|_m=1$. Then, for a sufficiently small mesh size $h$, there exists $\ci>0$ such that,
\begin{equation}
\label{ineq:norme-a-max}
    \|\uhjlifte  -\projPj \uhjlifte  \|_a \le \ci \max_{p\in \J} \|\up-\ProjHj \up \|_a,
\end{equation}
\begin{equation}
\label{ineq:norme-m-max}
    \|\uhjlifte  -\projPj \uhjlifte  \|_m \le \ci \max_{p\in \J} \|\up-\ProjHj \up  \|_m,
\end{equation}
\begin{equation}
\label{ineq:U-PU-norme-a}
    \|\uhjlifte  -\projPj \uhjlifte  \|_a \le  \ci (h^k + h^{r+1/2}),
\end{equation}
\begin{equation}
\label{ineq:U-PU-norme-m}
    \|\uhjlifte  -\projPj \uhjlifte  \|_m \le \ci (h^{k+1} + h^{r+1/2}).
\end{equation}
where $\ProjHj$ is the orthogonal projection over $\Fhlift$ with respect to $\mhell$, given in Definition \ref{def:projections}.
\end{lem}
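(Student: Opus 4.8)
The plan is to prove the four estimates in the order \eqref{ineq:norme-a-max}--\eqref{ineq:norme-m-max} first, then deduce \eqref{ineq:U-PU-norme-a}--\eqref{ineq:U-PU-norme-m}. Starting from the decomposition \eqref{eq:Plbdu-u} of $\projPj \uhjlifte - \uhjlifte$ as $\sum_{p\in \J}\alpha_p[\sum_{t\in\J}m(\ProjHj\up-\up,u_t)u_t + (\up-\ProjHj\up)]$, I would estimate each of the two inner contributions in the $\|\cdot\|_a$ and $\|\cdot\|_m$ norms. The second term $\up-\ProjHj\up$ is directly of the form appearing on the right-hand sides. For the first term, I would use that $\{u_t\}_{t\in\J}$ is $m$-orthonormal (hence also $a$-orthogonal with $a(u_t,u_t)=\lbdi\le\ci$), so that $\|\sum_t m(\ProjHj\up-\up,u_t)u_t\|_m^2 = \sum_t |m(\ProjHj\up-\up,u_t)|^2$ and each coefficient is controlled by $\|\ProjHj\up-\up\|_m$ via Cauchy--Schwarz; the $a$-norm of the same sum is bounded by $\sqrt{\lbdi}$ times its $m$-norm, hence still by $\ci\|\ProjHj\up-\up\|_m \le \ci\|\ProjHj\up-\up\|_a$ (using $\|\cdot\|_m\le c\|\cdot\|_a$ by coercivity). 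It then remains to control the scalars $\alpha_p$: since $\|\uhjlifte\|_m=1$ and $\uhjlifte=\sum_{p\in\J}\alpha_p\ProjHj\up$ with $\{\ProjHj\up\}_{p\in\J}$ a basis of $\Fhlift$ that converges to the $m$-orthonormal family $\{\up\}_{p\in\J}$ as $h\to 0$, the Gram matrix of $\{\ProjHj\up\}$ tends to the identity, so $\sum_p|\alpha_p|^2$ is bounded uniformly in $h$ for $h$ small; this yields \eqref{ineq:norme-a-max} and \eqref{ineq:norme-m-max}.

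Next, to obtain \eqref{ineq:U-PU-norme-a} I would combine \eqref{ineq:norme-a-max} with the triangle inequality $\|\up-\ProjHj\up\|_a \le \|\up-\ProjHj\up\|_{\ahell}\,(1+ch^r)$ from the norm equivalence of Corollary~\ref{coro:norm-equiv-with-h-r}, then invoke \eqref{ineq:norm(u-Hu)-Z-G} which bounds $\|\up-\ProjHj\up\|_{\ahell}$ by $\|\up-\ProgZj\up\|_{\ahell} + \ci\|\up-\projG\up\|_{\mhell}$. The first summand is exactly $\|\uj-\ProgZj\uj\|_{\ahell}$-type and was already bounded in the proof of \eqref{err:eigenvectors-H1} by $\ci(h^k+h^{r+1/2})$; the second summand is bounded by \eqref{ineq:Gellu-u-mhell} by $\ci h^{k+1}$. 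Taking the maximum over $p\in\J$ gives \eqref{ineq:U-PU-norme-a}.

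For \eqref{ineq:U-PU-norme-m} I would start from \eqref{ineq:norme-m-max} and bound $\|\up-\ProjHj\up\|_m$ via the norm equivalence by $\|\up-\ProjHj\up\|_{\mhell}$ up to a factor $(1+ch^r)$. The quantity $\|\up-\ProjHj\up\|_{\mhell}$ was already estimated in the proof of \eqref{err:eigenvectors-L2}: using $\ProjHj$ is the $\mhell$-orthogonal projection onto $\Fhlift$, $\ProgZj=\ProjHj\circ\projG$, the triangle inequality, \eqref{ineq:W-mhell_au-carre} and \eqref{ineq:Gellu-u-mhell}, one gets $\|\up-\ProjHj\up\|_{\mhell}\le \ci(h^{k+1}+h^{r+1/2})$. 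Taking the maximum over $p\in\J$ then yields \eqref{ineq:U-PU-norme-m}.

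The main obstacle I anticipate is the uniform-in-$h$ control of the coefficients $\alpha_p$ (equivalently, the uniform invertibility of the Gram matrix of $\{\ProjHj\up\}_{p\in\J}$): this rests on the lower semicontinuity of the rank and on the convergence $\ProjHj\up\to\up$ in $\L2(\Omega)$, which in turn uses that $\{\lbdhj\}_{j\in\J}$ is separated from the rest of the spectrum (Remark~\ref{exp:muJ}) — a point one must handle with care since it is precisely where the spectral gap enters. Everything else is a bookkeeping of already-established estimates combined through triangle inequalities and the norm equivalences of Corollary~\ref{coro:norm-equiv-with-h-r}.
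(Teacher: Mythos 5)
Your proposal is correct and follows essentially the same route as the paper: the decomposition \eqref{eq:Plbdu-u}, a Cauchy--Schwarz/orthonormality argument with the uniform bound $\sum_{p\in\J}|\alpha_p|^2\le C(N)$ (which the paper simply cites from the literature, while you sketch the underlying Gram-matrix argument), then \eqref{ineq:norm(u-Hu)-Z-G} combined with the norm equivalences, \eqref{err:eigenvectors-H1}, \eqref{ineq:Gellu-u-mhell} and the $\mhell$-bound from the proof of \eqref{err:eigenvectors-L2} to conclude. The only cosmetic difference is that you exploit the $m$-orthonormality and $a$-orthogonality of $\{u_t\}_{t\in\J}$ directly (via $\|\cdot\|_a=\sqrt{\lbdi}\,\|\cdot\|_m$ on the eigenspace), whereas the paper converts $m(\cdot,u_t)$ into $\tfrac{1}{\lbdi}a(\cdot,u_t)$ through the eigenvalue equation; both yield the same bounds.
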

\begin{proof}
Taking the norm with respect to the bilinear form a of \eqref{eq:Plbdu-u}, we bound it as follows,
\begin{equation*}
    \| \projPj \uhjlifte  - \uhjlifte   \|_a \le \sum_{p\in \J} |\alpha_p |  \Bigg[ \sum_{t\in \J}  |m( \ProjHj \up  -\up ,u_t)| \|u_t\|_a  +  \|\up-\ProjHj \up \|_a \Bigg].
\end{equation*}
By applying Cauchy-Schwarz, we have,
\begin{equation*}
    \| \projPj \uhjlifte  - \uhjlifte   \|_a \le (\sum_{p\in \J} |\alpha_p|^2 )^{\frac{1}{2}} \Bigg(\sum_{p\in \J}  \Bigg[ \sum_{t\in \J}  |m(\ProjHj \up  -\up ,u_t)| \|u_t\|_a  +  \|\up-\ProjHj \up \|_a \Bigg]^2 \Bigg)^{\frac{1}{2}}.
\end{equation*}
By Lemma 5.1 of \cite{23} the coefficients $(\alpha_p)_{p\in \J}$ satisfy, $\sum_{p\in \J} |\alpha_p|^2 \le C(N),$ where $C(N)$ is a constant dependent on the multiplicity $N$ of $\lbdi$. Keeping in mind that, for all $t \in  \J$, $u_t$ satisfies that, $a(u_t,v)=\lbdi m(u_t,v),$ for any~$v \in \HH1 \omgam,$ we have,
\begin{eqnarray*}
    \| \projPj \uhjlifte  - \uhjlifte   \|_a & \le & (C(N))^{\frac{1}{2}} \Bigg(\sum_{p\in \J}  \Bigg[ \sum_{t\in \J}  \frac{1}{\lbdi}|a( \ProjHj \up  -\up ,u_t)| \|u_t\|_a  +  \|\up-\ProjHj \up\|_a \Bigg]^2 \Bigg)^{\frac{1}{2}} \\
    & \le &\ci \Bigg(\sum_{p\in \J}  \Bigg[ \sum_{t\in \J}  \frac{1}{\lbdi}\|\ProjHj \up  -\up\|_a \|u_t\|_a \|u_t\|_a  +  \|\up-\ProjHj \up\|_a \Bigg]^2 \Bigg)^{\frac{1}{2}} \\
    & \le &\ci \Bigg(\sum_{p\in \J}  \Bigg[ \sum_{t\in \J}  \|\ProjHj \up  -\up\|_a \frac{1}{\lbdi}\|u_t\|_a^2 + \|\up-\ProjHj \up\|_a \Bigg]^2 \Bigg)^{\frac{1}{2}}.
\end{eqnarray*}
Noticing that $\frac{1}{\lbdi}\|u_t\|_a^2= \|u_t\|_m^2 = 1$ for all~$t \in \J$, we have,
\begin{equation*}
     \| \projPj \uhjlifte  - \uhjlifte   \|_a \le \ci   \Bigg(\sum_{p\in \J}\Bigg[ \sum_{t\in \J}  2 \|\ProjHj \up - \up \|_a \Bigg]^2 \Bigg)^{\frac{1}{2}}.
\end{equation*}
Then, we arrive at the inequality \eqref{ineq:norme-a-max} given by,
\begin{equation*}
     \| \projPj \uhjlifte  - \uhjlifte   \|_a \le \ci \max_{p\in \J} \|\up-\ProjHj \up\|_a.
\end{equation*}
To prove \eqref{ineq:U-PU-norme-a}, we need to keep in mind that the norms with respect to the bilinear forms~$a$ and~$\ahell$ are equivalent  and we use \eqref{ineq:norm(u-Hu)-Z-G} as follows, 
\begin{equation*}
    \|\up-\ProjHj \up\|_a  \le c \|\up-\ProjHj \up\|_{\ahell} 
     \le c \|\up-\ProgZj \up \|_{\ahell} + \ci\|\up-\projG \up \|_{\mhell}.
\end{equation*}
By applying again the norm equivalence and using the error estimations \eqref{err:eigenvectors-H1} and \eqref{ineq:Gellu-u-mhell}, we have,
\begin{equation*}
   \|\up-\ProjHj \up\|_a \le \ci ( h^k+h^{r+1/2}).
\end{equation*}
Combining the latter inequality with \eqref{ineq:norme-a-max}, we obtain \eqref{ineq:U-PU-norme-a}. \medskip

Passing to the proof of Inequality \eqref{ineq:norme-m-max}, we consider the norm with respect to $m$ of \eqref{eq:Plbdu-u} as follows,
\begin{equation*}
    \| \projPj \uhjlifte  - \uhjlifte   \|_m \le \sum_{p\in \J} |\alpha_p|  \Bigg[ \sum_{t\in \J}  |m(\ProjHj \up  -\up ,u_t)| \|u_t\|_m  +  \|\up-\ProjHj \up \|_m \Bigg]
\end{equation*}
Using Cauchy-Schwarz, we proceed in a similar manner as for the previous inequality,
\begin{align*}
    \| \projPj \uhjlifte  - \uhjlifte   \|_m & \le (\sum_{p\in \J} |\alpha_p|^2 )^{\frac{1}{2}} \Bigg(\sum_{p\in \J}  \Bigg[ \sum_{t\in \J}  |m(\ProjHj \up  -\up ,u_t)| \|u_t\|_m  +  \|\up-\ProjHj \up \|_m \Bigg]^2 \Bigg)^{\frac{1}{2}}\\
    & \le (C(N))^{\frac{1}{2}}  \Bigg(\sum_{p\in \J}  \Bigg[ \sum_{t\in \J}  \|\ProjHj \up -\up\|_m \|u_t\|_m \|u_t\|_m  +  \|\up-\ProjHj \up \|_m \Bigg]^2 \Bigg)^{\frac{1}{2}}\\
    & \le  \ci \Bigg(\sum_{p\in \J}\Bigg[ \sum_{t\in \J}  2 \|\ProjHj \up -\up\|_m \Bigg]^2 \Bigg)^{\frac{1}{2}},
\end{align*}
where we used $ \|u_t\|_m^2 = 1$. Consequently, we obtain the inequality \eqref{ineq:norme-m-max}. Lastly, using the error estimation \eqref{err:eigenvectors-L2}, we obtain \eqref{ineq:U-PU-norme-m}.
\end{proof}
\paragraph{Proof of Theorem \ref{th-error-bound-eigenfunction}: the eigenvalue estimation \eqref{err:eigenvalue}.}
\label{proof:eigenvalue-final}

Firstly we recall \eqref{lamda-lambdah},  we get,
$$
    |\lbdj-\lbdhj |  \le \|  \projPj U_j^\ell -U_j^\ell \|_a^2 + \lbdj \| \projPj U_j^\ell -U_j^\ell\|_m^2 +|a_h^\ell-a|(U_j^\ell,U_j^\ell) +\lbdhj |\mhell-m|(U_j^\ell,U_j^\ell). 
$$
Secondly, we use \eqref{ineq:U-PU-norme-a}, \eqref{ineq:U-PU-norme-m} and \eqref{ineq:m-mhell} to arrive at,
\begin{align*}
 |\lbdj-\lbdhj |  & \le \ci( h^{2k} +  h^{2r+1}) + \lbdj \ci ( h^{2k+2} + h^{2r+1}) + |a_h^\ell-a|(U_j^\ell,U_j^\ell) + c \lbdhj  h^{r+1} \|U_j^\ell\|^2_{\HH1\omgam} \\
 & \le \ci ( h^{2k} +  h^{2r+1}) + |a_h^\ell-a|(U_j^\ell,U_j^\ell) + c \lbdhj  h^{r+1} \|U_j^\ell\|^2_{\HH1\omgam}.
\end{align*}
The remaining term can be estimated as such by using \eqref{ineq:a-ahell},
$$
|a_h^\ell-a|(U_j^\ell,U_j^\ell)  \le c h^r \| \na U_j^\ell\|^2_{\L2(B_h^\ell)}+ c h^{r+1} \| U_j^\ell\|^2_{\HH1(\Gamma)}
$$
By adding and subtraction $\projPj U_j^\ell$ as follow, and then applying \eqref{ineq:U-PU-norme-a}, we get,
\begin{eqnarray*}
    |a_h^\ell-a|(U_j^\ell,U_j^\ell) 
    & \le & c h^r \| \na (\projPj U_j^\ell - U_j^\ell)\|^2_{\L2(B_h^\ell)} + c h^r \| \na (\projPj U_j^\ell)\|^2_{\L2(B_h^\ell)} +c h^{r+1}  \| U_j^\ell\|^2_{\HH1(\Gamma)} \\
    & \le & \ci h^r (h^{2k}+h^{2r+1}) + c h^r \| \na (\projPj U_j^\ell)\|^2_{\L2(B_h^\ell)} +c h^{r+1}  \| U_j^\ell\|^2_{\HH1(\Gamma)}\\
    & \le & \ci( h^{2k+r}+h^{3r+1}) + c h^r \| \na (\projPj U_j^\ell)\|^2_{\L2(B_h^\ell)} +c h^{r+1}  \| U_j^\ell\|^2_{\HH1(\Gamma)}.
\end{eqnarray*}
Since we have $\projPj U_j^\ell \in\F $ a linear combination of exacts eigenvalues, then~$\projPj U_j^\ell \in \H2 \omgam$ and the inequality \eqref{ineq:H1+h1/2} can be applied to it as follows,
\begin{align*}
 |a_h^\ell-a|(U_j^\ell,U_j^\ell) & \le \ci( h^{2k+r}+h^{3r+1}) + c h^{r} \big(h^{1/2} \|\projPj  U_j^\ell\|_{\H2 (\Omega)}\big)^2 +c h^{r+1}  \| U_j^\ell\|^2_{\HH1(\Gamma)} \\
 & \le  \ci( h^{2k+r}+h^{3r+1}) + c h^{r+1}  \|\projPj  U_j^\ell\|_{\H2 (\Omega)}^2 +c h^{r+1}  \| U_j^\ell\|^2_{\HH1(\Gamma)} \\
 & \le   \ci h^{r+1} ( \|\projPj  U_j^\ell\|_{\H2 (\Omega)}^2 +  \| U_j^\ell\|^2_{\HH1(\Gamma)}),
\end{align*}
where $\|\projPj  U_j^\ell\|_{\H2 (\Omega)}^2 +  \| U_j^\ell\|^2_{\HH1(\Gamma)}$ is uniformly bounded with respect to $h$ and $r$. Since the exact eigenfunctions are sufficiently regular and we supposed that $\|  U_j^\ell \|_{\L2 (\Omega)}=\|  u_t \|_{\L2 (\Omega)}=1$, by \eqref{exp:P_m-Ujell}, $\|\projPj  U_j^\ell\|_{\H2 (\Omega)}$ is bounded independently of h.  By Inequality \eqref{ineq:U-PU-norme-a}, $\dist(\uhjlifte, \F) \to 0$ where $\F$ is of finite dimension, we can bound $ \| U_j^\ell\|_{\HH1(\Gamma)}$ independently of $h$. Finally, replacing this inequality in the eigenvalue estimation, we get the desired result~\eqref{err:eigenvalue}.
%
%
%
\section{Numerical experiments}
\label{sec:numerical-ex}
In this section are presented numerical results 
aimed to illustrate the convergence estimates of Theorem \ref{th-error-bound-eigenfunction}. We perform these simulations in the two dimensional and three dimensional cases. The Ventcel problem \eqref{sys-eigenval-ventcel} is considered on various domains. 
The discrete problem~\eqref{fvd} is implemented and solved using the finite element library CUMIN~\cite{cumin}. The resolution of the spectral problem is done with the help of the library ARPACK\footnote{https://www.arpack.fr/}, which is a numerical software library for solving large scale eigenvalue problems. The symmetric case (the iterative Lanczos algorithm) is used in shift invert mode with a shift value~$\sigma=-1$ (in order to accurately compute the eigenvalues of smallest amplitude). For this method, linear
systems~$Ax = b$ have to be solved for a single matrix $A$ and for numerous varying right hand sides: a linear system solver is  required for the sparse CSR matrix $A$ that is symmetric and positive definite.

\medskip

In dimension $2$, the direct solver MUMPS\footnote{https://mumps-solver.org/index.php}(MUltifrontal Massively Parallel sparse direct Solver) is considered allowing fast computations. It is particularly well adapted in the present context where linear systems involving the same matrix $A$ have to be solved many times. Cholesky ${\rm LL^T}$ decomposition of a single (positive definite) CSR sparse matrix is computed once at the beginning and afterwards used for numerous linear equation resolutions all along the spectral Lanczos algorithm. The tolerance for the Lanczos algorithm was set to a low value $(1{\rm E{-12}})$: this allowed to compute quickly, while using MUMPS, the numerical errors up to error values of ${\rm 1 E{-11}}$, allowing us to study the convergence asymptotic regimes. More details on the computational efforts are given in the following paragraph devoted to the unit disk case. 

In dimension $3$, memory requirements imposed a lighter method: a conjugate gradient with Jacobi preconditioning has been used. The tolerances for the iterative algorithms (Lanczos and conjugate gradient) have been set to very low values $(1{\rm E{-14}})$: this generally allowed to compute accurately the numerical errors up to error values of ${\rm 1 E{-10}}$, which was necessary in order to well capture the convergence asymptotic regimes. The $3${\rm D} computations are the most demanding in terms of computational effort and time. Therefore, they deserved a specific attention, which is given in the paragraph dealing with the unit ball.

\medskip

Curved meshes of the domain $\Omega$ of geometrical order~$1\le r \le 3$ have been generated using the software Gmsh
\footnote{\url{https://gmsh.info/}}. All integral computations (either on the physical domain $\Omega$ or on the computational domain~$\Omega_h^r$) are performed on the reference simplex using changes of coordinates. These changes of coordinates are made on each element of the underlying mesh that is considered: either an affine mesh $\tauh$, a curved mesh $\taur$ or an exact mesh $\taue$. This allows to compute numerical errors such as $\|u^\ell_h - u\|_{{\rm L}^2(\Omega)}$ between the lift $u^\ell_h$ of a finite element function $u_h$ defined on $\Omega_h^{(r)}$ and a function $u$ defined on the smooth domain $\Omega$. On the reference simplex, high order quadrature methods are used such that the integration error is of lower order than the approximation errors that are evaluated in this section: it has systematically been verified that the integration errors have negligible influence over the forthcoming numerical results.

Convergence towards the eigenfunctions has only been studied on domains where the analytical solutions are known (the disk and the ball). On domains where the eigenfunctions are not analytically known, such a convergence study is much more complicated to handle. 
We would need to compute reference eigenfunctions on a refined reference grid and also to project the numerical solutions defined on coarser meshes. However, in the context of curved meshes, this would lead to non trivial difficulties, which cannot be considered in the present work.

\medskip

All numerical results presented in this section can be fully reproduced using dedicated source codes available on CUMIN Gitlab\footnote{Cumin GitLab deposit, \url{https://plmlab.math.cnrs.fr/cpierre1/cumin}}.
\subsection{The two dimensional case}

\paragraph{Eigenvalue estimate on a smooth domain.}

The Ventcel problem \eqref{sys-eigenval-ventcel} is considered on a smooth domain defined as the interior of a Jordan curve, denoted~$\gamma$. The curve $\gamma$ has been set in such a way to have a smooth and connex domain, which moreover is non-convex with no symmetries, in order to avoid eventual \textit{super convergence} properties. 
Indeed, the domain $\Omega$ is the interior of the Jordan curve $\gamma: \theta\in[0,2\pi]\to \gamma(\theta) \in \R^2$ satisfying $\gamma(0)=\gamma(2\pi)$. For any~$\theta \in [0, 2\pi],$ the function gamma is given by,
\begin{equation*}
\gamma(\theta) = (\kappa(\theta) \cos{\theta},\kappa(\theta) \sin{\theta}),
\end{equation*}
where $k(\theta) = 1 + \alpha  \cos \theta + \beta \sin \theta + \frac{\beta}{2} \sin 3\theta,$
with $\alpha=0.3$ and $\beta=0.4$.
Curved meshes of order~$r=1,\dots, 3$ are generated using CUMIN and Gmsh (see Figure~\ref{fig:flower} for linear and quadratic meshes). $\mathbb{P}^k$ finite element methods, with degrees~$k=1,\dots, 4$, are employed to estimate the eigenvalue error. 
\begin{figure}[H]
\centering
    \includegraphics[width=0.35\textwidth]{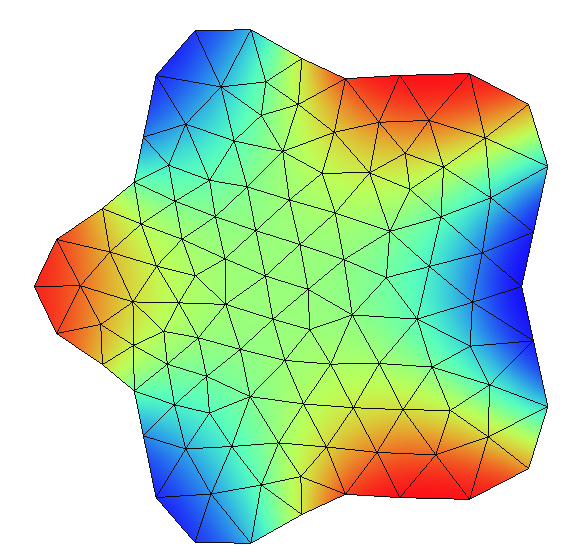}
    ~~~
    \includegraphics[width=0.35\textwidth]{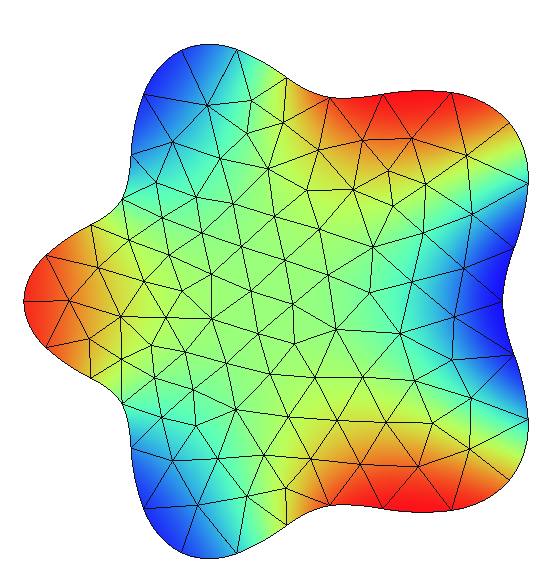}
 \caption{Representation of the $6^{\rm th}$ eigenfunction computed using $\P 3$ finite element on a (coarse) mesh of $\Omega$: affine mesh (left) and quadratic mesh (right).}
\label{fig:flower}
\end{figure}

The mesh degree and the finite element order being fixed, the 10 first eigenvalues are computed on a series of successively refined meshes: each mesh counts~$20\times 2^{n-1}$ edges on the domain boundary, for~$n=1,\dots, 5$. We do not know the exact eigenvalues of the Ventcel problem \eqref{sys-eigenval-ventcel} on this domain. Thus, reference eigenvalues have been computed on a reference mesh of order~$r=3$ using a~$\P 4$ finite element method. The reference mesh counts~$20 \times 2 ^5$ boundary edges and is made of approximately~$76\, 000$ cubic triangles, the associated~$\P 4$ finite element space has approximately~$610\, 000$~DOF (Degrees Of Freedom). We mention that the computation time is very fast in the present case: total computations roughly last one minute on a simple laptop, which are made really efficient with the direct solver MUMPS here.

\medskip

To calculate the eigenvalue error, we estimate the difference between the reference eigenvalues, denoted $\lbdj$, and the computed eigenvalues denoted $\lbdhj$. In Table \ref{TABLE2}, the convergence order of the error associated to the~$6^{\rm th}$ eigenvalue, given by $\elbd :=  |\lambda_6 - \Lambda_6|$, is presented. We mention that any other choice within the 10 eigenvalues that have been computed lead to the same convergence pattern. The convergence orders are evaluated from the error ratio between two successive meshes. The order estimations display very stable behaviour (no oscillation): we reported in Table~\ref{TABLE2} the convergence orders estimated between the two finest meshes.
%
%
%
%
\begin{table}[!ht]
    \centering
    \begin{tabular}{|l||l|l|l|l|}
\cline{2-5}
\multicolumn{1}{c||}{}    &  \multicolumn{4}{|c|}{\textbf{$\elbd$}}  \tabularnewline
\hline
Mesh type &   {$\P1$} &   {$\P2$} &   {$\P3$} &   {$\P4$}   \tabularnewline
\hline
 Affine \textcolor{ black}{(r=1)}   & 
 {1.96} & 2.00 &  {2.00} &  {2.00}  \tabularnewline
\hline
Quadratic \textcolor{ black}{(r=2)} & 
1.99 & 3.97 &  {3.98} &  {3.97} \tabularnewline
\hline
Cubic \textcolor{ black}{(r=3)} & 
1.99 & \textcolor{red}{2.99} & 4.07 &  \textcolor{ black}{4.08}\tabularnewline
\hline
\end{tabular}
 \caption{\label{TABLE2}Convergence order of $\elbd$ (Figures in red represent a loss in the convergence rate).}
\end{table}

As displayed in Table \ref{TABLE2}, the convergence rate of $\elbd$ on an affine mesh ($r=1$) are equal to~$r+1=2$ for any $\P k$ finite element method used as expected by the theory.

For the quadratic case ($r=2$), a \textit{super convergence} is observed: a saturation of the error occurs at order $4$ when it was expected to stop at $3$. This super convergence had already been observed in~\cite{Jaca}, \cite{art-joyce-1} and \cite{D4}: quadratic meshes seem to behave as if $r=3$, however no theoretical explanation of this phenomenon has been proposed so far to the authors' knowledge. It is interesting to notice that this super convergence also occurs in the present example though the domain is neither convex  nor symmetric. This implies that this phenomenon is not related to some particular geometric properties of the domain, as one might presume.

On the cubic meshes ($r=3$), the convergence order of $\elbd$ follows the expected estimate~\eqref{err:eigenvalue} and a saturation of the error is observed at order $r+1=4$. The only odd case worth mentioning is when using a $\P 2$ finite element method on a cubic mesh. In this particular case, we obtained a convergence order of 3 whereas the theory predicts a convergence order of 4. This loss is observed in all the numerical experiments throughout this work and it will be discussed in details in the following paragraph. 

\paragraph{Error estimates on the unit disk}

The Ventcel problem \eqref{sys-eigenval-ventcel} is considered on the unit disk~${\rm D(O,1)} \subset \R^2$. 
In this case, the eigenfunctions are the harmonic polynomials. A convergence analysis is performed on the $6^{\rm th}$ eigenvalue $\lambda_6$  of multiplicity $2$ with corresponding eigenspace, denoted ${\rm E_3}$, equal to the space of harmonic polynomials of degree $3$. 

To proceed, $\mathbb{P}^k$ finite element methods, of degrees $k=1,\dots, 4$, are used for the error estimates on meshes of order $r=1,\dots, 3$ (see Figure \ref{fig:disk-eigenfct} for linear and quadratic meshes). The mesh order and the finite element degree being fixed, the~$12$ first eigenvalues are computed on a series of five successively refined meshes: each mesh counts $20\times 2^{n-1}$ edges on the domain boundary, for~$n=1,\dots, 6$. On the most refined mesh using a $\P 4$ finite element method, we counted~$20 \times 2^5$~boundary edges and approximately $75\, 500$ triangles. The associated~$\P 4$ finite element space has approximately~$605\,600$~DOF. The computations are accomplished very quickly, the total computation time is less than four minutes on a regular computer.

\medskip

We denote~$\Lambda_6$ a numerical eigenvalue approximating $\lambda_6$ with $U_6$ as its associated computed eigenfunction. For each mesh order~$r$ and each finite element degree~$k$, 
the following numerical errors are computed on a series of refined meshes: 
$$
 \eL2 := \inf \{ \| U_6^\ell - u \|_{\L2 (\Omega)}, u\in {\rm E_3} \}, \quad 
 \eH1 := \inf \{ \| \na (U_6^\ell - u ) \|_{\L2 (\Omega)}, u\in {\rm E_3} \}, 
$$
$$
  \mbox{ and } \quad  \elbd :=  |\lambda_6 - \Lambda_6|.
$$
The $\L2$ distance between $U_6^\ell$ and the eigenspace ${\rm E_3}$, denoted $\eL2$, is computed using the $\L2$ orthogonal projection of $U_6^\ell$ onto ${\rm E_3}$. In a similar manner, the $\L2$ distance between $\nabla U_6^\ell$ and the space~$\na {\rm E_3} = \{\na u, u \in {\rm E_3}\}$, denoted $\eH1$, is also computed using the $\L2$ orthogonal projection of~$\na U_6^\ell$ onto $\na {\rm E_3}$. 

In Tables~\ref{tab:eigenfunction-disk} and \ref{TABLE:eigenval-disk}, the convergence orders of $\eL2$, $\eH1$ and $\elbd$ are reported. They are evaluated from the error ratio between two successive meshes that display very stable behaviour, detecting no oscillation. The displayed error rates are estimated between the two finest meshes.
\begin{figure}[H]
\centering
\includegraphics[width=0.25\textwidth]{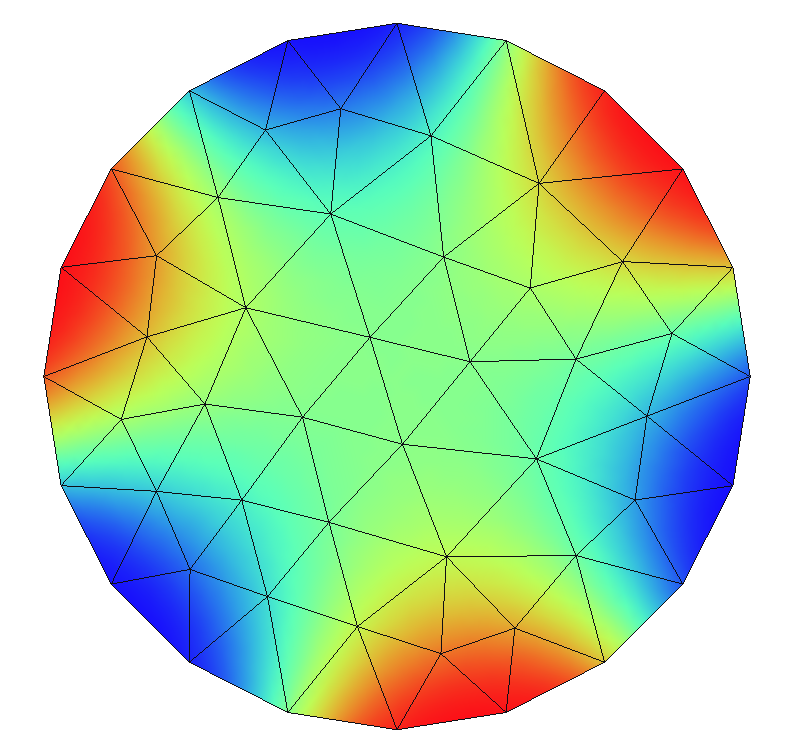}
~~~
    \includegraphics[width=0.25\textwidth]{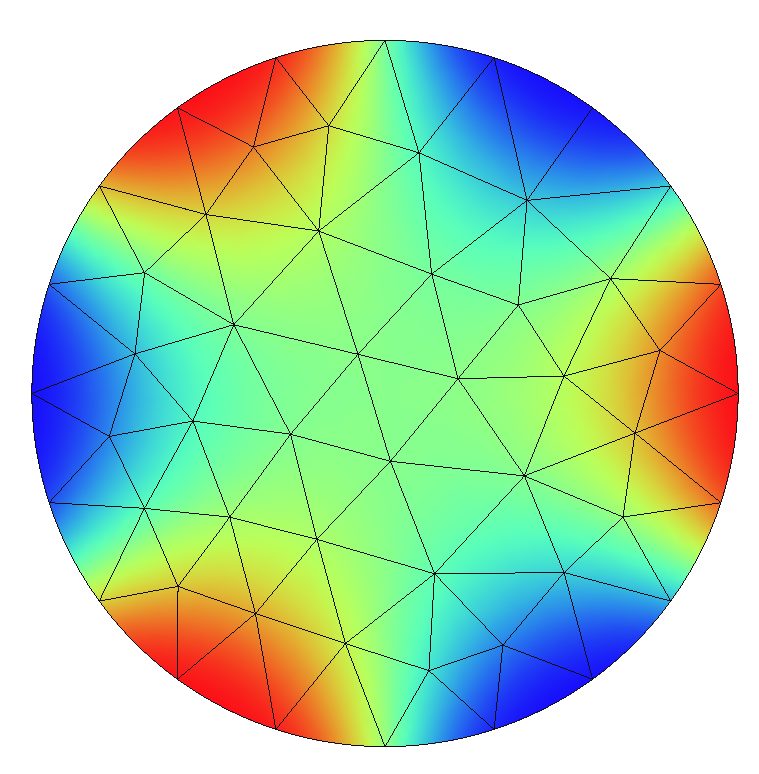}
 \caption{Display of the eigenfunction $U_6$ associated to the computed eigenvalue $\Lambda_6$ using $\P 3$ method on an affine mesh (left) and a quadratic mesh (right).}
\label{fig:disk-eigenfct}
\end{figure}
\begin{table}[!ht]
    \centering
    \begin{tabular}{|l||l|l|l|l||l|l|l|l|}
\cline{2-9}
\multicolumn{1}{c||}{}    &  \multicolumn{4}{|c||}{$\eL2$} & \multicolumn{4}{|c|}{$\eH1$}  \\[0.08cm]
\cline{2-9}
\multicolumn{1}{c||}{}    &  $\P1$ &  $\P2$ &  $\P3$ &  $\P4$ & $\P1$ &  $\P2$ &  $\P3$ &  $\P4$  \tabularnewline
\hline
 Affine mesh (r=1)   & 
 {2.01} & {2.48}  & 2.48 & 2.48 & 
1.00 & 1.51 &  1.50 & 1.50 \tabularnewline
\hline
Quadratic mesh (r=2) & 
2.01 & 3.07 &  {4.5} & {4.47} & 1.00 & 2.01 & {3.5} & 3.49 \tabularnewline
\hline
Cubic mesh (r=3) & 
2.01 & \textcolor{red}{2.47}& {3.48} & {4.49} & 
0.99 &\textcolor{red}{1.49} & \textcolor{red}{2.48} & 3.49 \tabularnewline
\hline
\end{tabular}
\caption{Convergence order of the eigenfunctions errors in $\L2$ and $\HH1_0$ norms (Figures in red represent a loss in the convergence rate).
  \label{tab:eigenfunction-disk}
}
\end{table}
The $\HH1_0$ error convergence rate in Table \ref{tab:eigenfunction-disk} is equal to~$\min \{ k, r+1/2 \}$, for the most part: on an affine mesh, the order of $\eH1$ is equal to $1.5$, for all $\P k$ method with $k \ge 2$, as expected. On the quadratic mesh, similarly to the result in Table \ref{TABLE2},  the quadratic mesh acts like a cubic mesh: the error rate is equal to $3.5$ instead of $2.5$ for a $\P4$ method, as if r is equal to $3$. However, one needs to point out that, with a $\P 3$ method, the order is equal to $3.5$ surpassing the expected value equal to $3$. A possible explanation for this phenomenon is that the eigenspace ${\rm E_3}$ associated to $\lambda_6$ is equal to the space of harmonic polynomials of degree $3$ on the disk, as stated before. Moreover, the finite element approximation space $\Vh$ is also made of polynomials on most of the domain (all the elements that do not have an edge on the boundary, i.e. $\Omega\setminus B_h^\ell$ where $B_h^\ell$ is defined in Corollary~\ref{coro:h-1/2}). This large vicinity between ${\rm E_3}$ and $\Vh$ may be a possible cause for the super convergence observed here. Lastly, on the cubic mesh, for a $\P 4$ method the rate of $\eH1$ is equal to~$3.5$, as anticipated. However, oddly, for a $\P 2$ and $\P 3$ method, a loss in the order of convergence of $\eH1$ is depicted and highlighted in red. Instead of having a convergence rate equal to $2$ (resp.~$3$) for a $\P 2$ (resp. $\P 3$) method we obtained $1.5$ (resp. $2.49$). This is discussed in more details in the following paragraph.

The $\L2$ error convergence rates are displayed in Table \ref{tab:eigenfunction-disk}, where a super convergence is quickly noticed: in the affine case $(r=1)$, the convergence rate of~$\eL2$ is equal to $2.5$ instead of~$1.5$ for a~$\P k$~method with $k\ge 2$. As discussed previously, a super convergence is observed on the quadratic mesh: the quadratic mesh acts like the cubic mesh with $r=3$. However, the convergence order depicted in Table \ref{tab:eigenfunction-disk} is equal to $4.5$ surpassing the expected order of $3.5$ for a $\P 3$ and $\P 4$ method. In the cubic case, the convergence rate is equal to $4.5$ instead of $3.5$ for a $\P 4$ method. A plausible reason for this super convergence on all curved meshes of order $r=1,\, 2,\, 3$ is that the $\L2$ estimate~\eqref{err:eigenvectors-L2} is not optimal. 
In the light of these numerical results, we can formulate the following conjecture which may be a more accurate version of the obtained estimate~\eqref{err:eigenvectors-L2}:
\begin{equation}
    \label{ineq:L2-modif} \eL2 \le \ci ( h^{k+1} + h^{r+1}).
\end{equation}
We obtained a similar error estimate in the $\L2$ norm in~\cite{art-joyce-1} for a Ventcel problem with source terms.
However we have not been able to prove this estimate  \eqref{ineq:L2-modif}.
One has to point out that even with the estimate \eqref{ineq:L2-modif} a super convergence is still observed in the following cases: on affine mesh with a $\P k$ method with $k \ge 2$, the rate of $\eL2$ is equal to $2.5$ instead of $2=r+1$. Additionally, on quadratic meshes with a $\P 3$ and $\P 4$ method, the order of $\eL2$ is equal to $4.5$ instead of $4=r+1$. Similarly, on cubic meshes with a $\P 4$ method, the error order is equal to $4.5$ instead of $4=r+1$. As stated in the case of the~$\HH1_0$ error, the large similarity between the eigenspace ${\rm E_3}$ associated to~$\lambda_6$ and the finite element space $\Vh$ may be a possible cause for this super convergence. 

\medskip

One needs to stress that for these both errors $\eH1$ and $\eL2$, similarly to the results in Table~\ref{TABLE2}, a loss in the convergence rate is detected on a cubic mesh with a~$\P 2$ and $\P 3$ method. This convergence default of $-1/2$ is already discussed for the $\HH1_0$ error in the previous paragraph, and it is also observed in the case of the $\L2$ error: the convergence rate of $\eL2$ is equal to $2.5$  for the~$\P 2$ method instead of $3$. So far we are not able to fully explain this convergence default on cubic meshes. Moreover as discussed in \cite{art-joyce-1}, we noticed that it is only related to "volume norms", since the numerical errors computed in $\L2(\Gamma)$ and $\HH1(\Gamma)$ norms show the expected convergence rate. Numerical experiments we have led in this direction show that this lack of convergence is not related to the lift. This lack of convergence is associated with the interpolation error  between a smooth function $u$ and its finite element interpolent $\mathcal{I} u\in \Vh$, denoted $\| \mathcal{I}  u - u\|_{\HH1(\omhr)}$. On the considered cubic meshes, this error behaves like $O(h^{k-1/2})$ instead of $O(h^k)$ for $k \ge 2$. While conducting some experiments, we noticed that this interpolation error is highly sensitive to the position of the central node in cubic elements without being able so far to overcome this issue.
%
%
%
%
%
%
\begin{table}[!ht]
    \centering
    \begin{tabular}{|l||l|l|l|l|}
\cline{2-5}
\multicolumn{1}{c||}{}    &  \multicolumn{4}{|c|}{\textbf{$\elbd$}}  \tabularnewline
\hline
Mesh type &   {$\P1$} &   {$\P2$} &   {$\P3$} &   {$\P4$}   \tabularnewline
\hline
 Affine \textcolor{ black}{(r=1)}   & 
2.00 & 2.00 & 2.00 & 2.00  \tabularnewline
\hline
Quadratic \textcolor{ black}{(r=2)} & 
2.00 & 4.01 & 4.01 &  {3.99} \tabularnewline
\hline
Cubic \textcolor{ black}{(r=3)} & 
2.00 &  \textcolor{red}{3.27} & {3.89} &  4.00\tabularnewline
\hline
\end{tabular}
 \caption{\label{TABLE:eigenval-disk}Convergence order of $\elbd=|\lambda_6-\Lambda_6|$ (Figures in red represent a loss in the convergence rate).}
\end{table}

The convergence rates of $\elbd$ observed in Table~\ref{TABLE:eigenval-disk} are analogous to the results of Table \ref{TABLE2}. As anticipated, the quadratic mesh $(r=2)$ behaves as if $r$ is taken equal to $3$: the convergence rate is equal to $4$ instead of $3$, for all $\P k$ method with~$k \ge 2$. A loss in the convergence rate is highlighted in red in Table~\ref{TABLE:eigenval-disk}  and in Table \ref{TABLE2}, in the cubic case $(r=3)$ for a $\P 2$ method. Indeed, in the same case, the~$\HH1_0$ order of convergence for the associated eigenfunction in Table \ref{tab:eigenfunction-disk} is equal to $1.5$ instead of $2$ (see Table \ref{tab:eigenfunction-disk}). This seems to imply an order of convergence of $2\times1.5=3$ instead of~$2\times2=4$ for the eigenvalues. 
\subsection{A 3D case: error estimates on the unit ball}

To conclude these numerical experiments, the system \eqref{sys-eigenval-ventcel} is now considered on the unit ball~${\rm B(O,1)} \subset \R^3$. The ball is discretized using meshes of order~$r=1,\dots, 3$, which are depicted in Figure~\ref{f1} for affine and quadratic meshes. A convergence analysis is performed on the~$10^{\rm th}$~eigenvalue~$\lambda_{10}$ of multiplicity~$7$. Since on the unit ball, the eigenfunctions are the harmonic polynomial, the corresponding eigenspace ${\rm E_3}$ to $\lambda_{10}$ is equal to the space of harmonic polynomials of degree $3$. 

For each mesh order $r$ and finite element degree $k$, we compute the~$12$ first eigenvalues on a series of five successively refined meshes: it has been necessary to consider these five meshes in order to obtain a reliable estimation of the convergence rates (considering a ${\rm 6^{th}}$ mesh however would have been unaffordable in terms of computational efforts). Each mesh counts~$20\times 2^{n-1}$ edges on the equator circle, for~$n=1, \dots ,5$. The most refined mesh has approximately $2,4 \times 10^6$ tetrahedra and the associated $\P 3$ finite element method counts  $11 \times 10^6 $ degrees of freedom. Consequently the matricial system of the spectral problem, which needs to be solved, has a size~$11 \times 10^6 $ with a rather large stencil. As a result, in the 3D case, the computations are much more demanding, both in terms of CPU time and of memory consumption. The use of MUMPS, as we did in the 2D case, is no longer an option due to memory limitation.  The inversion of the linear system is done using the conjugate gradient method with a Jacobi preconditioner. With this strategy,~8 iterations of ARPACK were in general required to reach convergence (with a tolerance threshold of ${\rm 1E-14}$ as stated in this section's introduction): each iteration of ARPACK required roughly $130$ linear system inversions, each of which involving $2\, 000$ iterations of the preconditioned CG algorithm. To handle these computations, we resorted to the UPPA research computer cluster PYRENE\footnote{PYRENE Mesocentre de Calcul Intensif Aquitain, https://git.univ-pau.fr/num-as/pyrene-cluster}. Using shared memory parallelism on a single CPU with $32$ cores and $2\, 000$ Mb of memory, each case required between $10$ to $30$ hours of computations. 

\begin{figure}[H]
\centering
\includegraphics[width=0.25\textwidth]{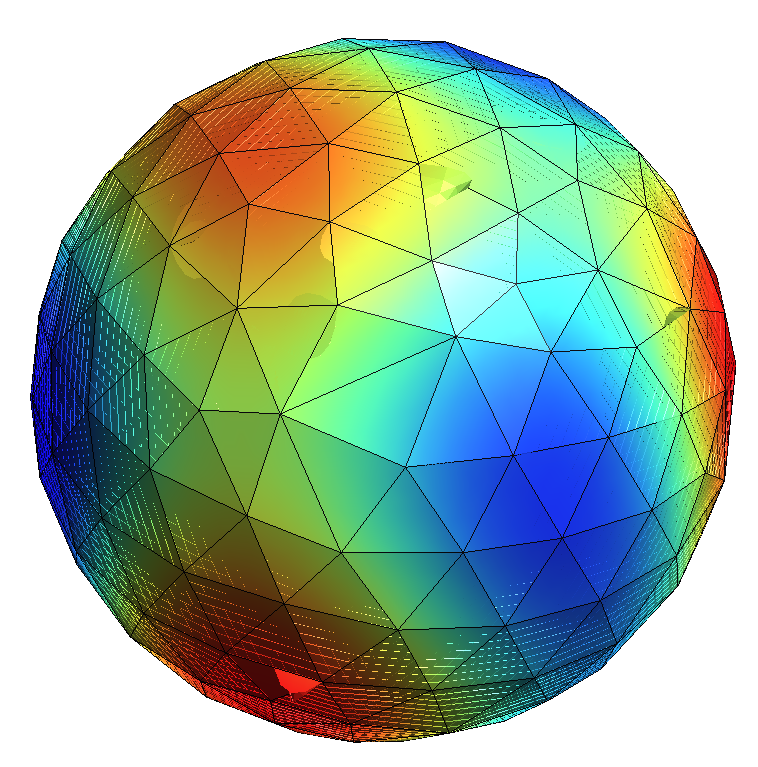}
~~~~~~
    \includegraphics[width=0.25\textwidth]{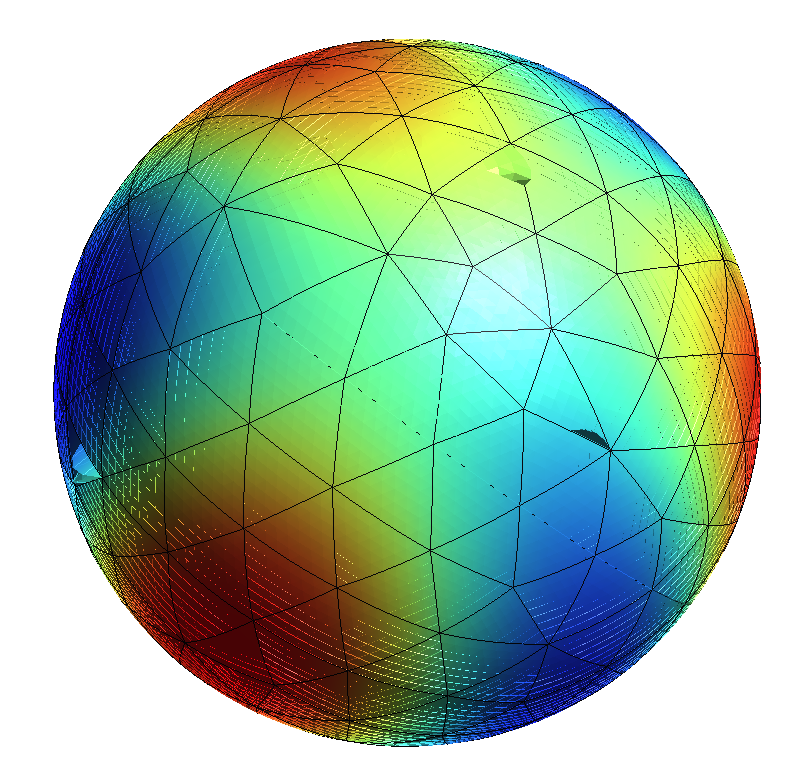}
 \caption{Display of the eigenfunction associated with the eigenvalue $\Lambda_{10}$ using $\P 2$ finite element on an affine mesh (left) and a quadratic mesh (right).}
\label{f1}
\end{figure}

\medskip

Denote~$\Lambda_{10}$ a numerical eigenvalue approximating $\lambda_{10}$ with $U_{10}$ as its associated computed eigenfunction. In each case, the following numerical errors are computed on a series of refined meshes, 
$$
 \eL2 := \inf \{ \| U_{10}^\ell - u \|_{\L2 (\Omega)}, u\in {\rm E_3} \}, \quad 
 \eH1 := \inf \{ \| \na (U_{10}^\ell - u ) \|_{\L2 (\Omega)}, u\in {\rm E_3} \}, 
$$
$$
  \mbox{ and } \quad  \elbdd :=  |\lambda_{10} - \Lambda_{10}|.
$$
Similarly to the disk case, orthogonal projections onto ${\rm E_3}$ are used in order to compute the $\L2$ (resp. $\HH1_0$) distance between $U_{10}^\ell$ and the eigenspace ${\rm E_3}$, denoted~$\eL2$ (resp. $\eH1$). 

\begin{figure}[H]
\centering
\includegraphics[width=0.45\textwidth]{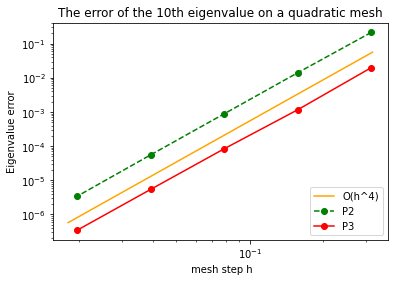}
\includegraphics[width=0.44\textwidth]{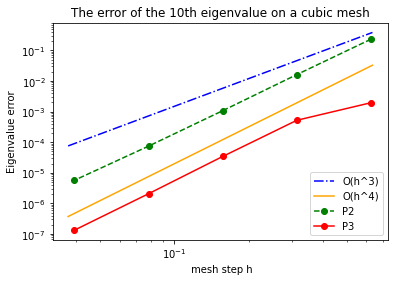}
 \caption{Display of the convergence rate of $\elbdd=|\lambda_{10}-\Lambda_{10}|$ using $\P 2$ and $\P 3$ finite element on a quadratic mesh (left) and a cubic mesh (right).}
\label{Fig:courbes-lambda_10}
\end{figure}
In Figure \ref{Fig:courbes-lambda_10}, is displayed a log–log graph of the error $\elbdd$ with respect to the mesh step, on a quadratic mesh (right) and a cubic mesh (left). In the quadratic case, as in the two dimensional experiments, the error is in $O(h^4)$ whereas $O(h^3)$ was expected from the theory. The same super convergence of quadratic meshes is present as in the 2D case: it is very interesting to underline this behaviour of the quadratic meshes, which brought a $O(h^4)$  geometric error also in three dimensions.
In the cubic case, when using a $\P 2$ method, the convergence rate of~$\elbdd$ starts around 4 tending to $3$, in hopes of following the loss in the convergence rate observed in the 2D case. {Note that for the ${\rm 10^{th}}$ eigenvalue, its asymptotic regime is quite harder to capture than the first ones. For an eigenvalue $\lambda_j$ with lower rank $j<10$, the convergence rate goes faster to 3, strengthening the hypothesis of a convergence loss in this case, as observed in the 2D case.} Finally, when using a $\P 3$ method on a cubic mesh the error $\elbdd$ seems to  be in $O(h^4)$, following Inequality \eqref{err:eigenvalue}.
\begin{figure}[H]
\centering
\includegraphics[width=0.45\textwidth]{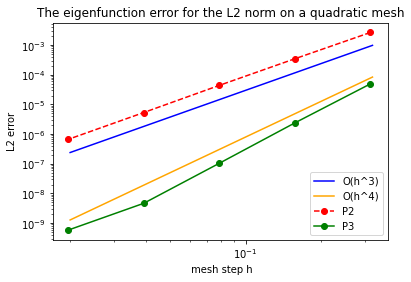}
\includegraphics[width=0.44\textwidth]{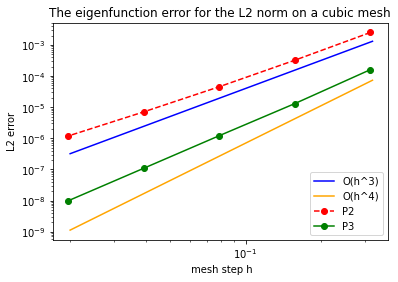}
 \caption{Display of the convergence rate of $\eL2$ using $\P 2$ and $\P 3$ finite element on a quadratic mesh (left) and a cubic mesh (right).}
\label{Fig:courbes-L2}
\end{figure}

In Figure \ref{Fig:courbes-L2}, is displayed a log–log graph  of the $\L2$ error $\eL2$ with respect to the mesh step, on a quadratic mesh (right) and a cubic mesh (left).  On the quadratic mesh, for a $\P 2$ method, the order of $\eL2$ is equal to $3$, as expected. However, for a method of degree $k=3$, the error order seems to be slightly more than $4$ for the first $4$ meshes. Though the convergence rate decreases on the last point, this seems to confirm that super convergence for quadratic meshes also holds on the eigenfunctions in 3D. In the cubic case with a $\P 2$ (resp. $\P 3$) method, the graph of $\eL2$ seems to have a slope approximately equal to $2.5$ (resp. $3.5$). The same loss in convergence as in the~2D case is observed, see Table \ref{tab:eigenfunction-disk}: {this convergence default of $-1/2$ has been formerly discussed in the~2D case.} 
\begin{figure}[H]
\centering
\includegraphics[width=0.45\textwidth]{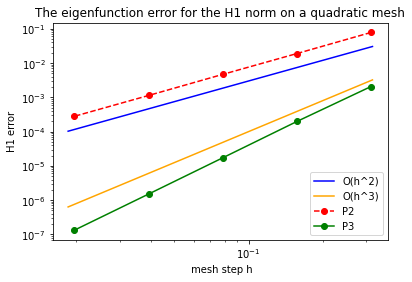}
\includegraphics[width=0.44\textwidth]{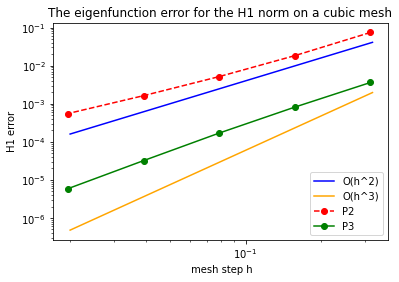}
 \caption{Display of the convergence rate of $\eH1$ using $\P 2$ and $\P 3$ finite element on a quadratic mesh (left) and a cubic mesh (right).}
\label{Fig:courbes-H1}
\end{figure}

Concerning the $\HH1_0$ error, the results obtained in Figure \ref{Fig:courbes-H1} agree with the rates obtained on the disk. The convergence order is equal to~2 on the quadratic mesh with a finite element degree $k=2$. With a $\P 3$ method, the graph of $\eH1$ seems to have a sloop around $3.5$ higher than the awaited value of $3$. As for the cubic mesh, the loss in the convergence rate of $\eH1$ was already observed and discussed in the case of the disk: for a $\P 2$ (resp. $\P 3$) method, one can assess that the order of $\eH1$ is slightly less than $2$ (resp. $3$), similarly to Table \ref{tab:eigenfunction-disk}.
%
%
%
%

\appendix

\section{Mesh construction}
\label{mesh:appendix}
%
%
%
%
%
%
%
\subsection{Affine mesh $\tauh$} 
Let $\tauh$ be a polyhedral mesh of $\Omega$ made of simplices of dimension $d$ (triangles or tetrahedra), it is chosen as quasi-uniform and henceforth shape-regular (see  \cite[definition 4.4.13]{quasi-unif}). 
Define the mesh size~$h:= \max\{\mathrm{diam}(T); T \in \tauh \}$, where $\mathrm{diam}(T)$ is the diameter of $T$. 
The mesh domain is denoted by $\omh1:= \cup_{T\in  \tauh}T$. Its boundary denoted by $\gh1 :=\partial \omh1$ is composed of $(d-1)$-dimensional simplices that form a  mesh of $\Gamma = \partial \Omega$. The vertices of $\gh1$ are assumed to lie on $\Gamma$. 
%
%
%
%
%
%
%
%
%
For $T \in \tauh$, we define an affine function that maps the reference element $\tref$ onto~$T$, 
$
\ft : \tref \to T:=\ft(\tref).
$
For more details, see \cite[page 239]{ciaravtransf}. 
%
%
%
%
%
%
%
%
%
\subsection{Exact mesh $\taue$}
In the 1970's, many authors gave an explicit construction of an exact triangulation (see  \cite{scott,Lenoir1986}). In this subsection, is recalled the definition of an exact mesh given in \cite[\S 4]{elliott}, \cite[\S 3.2]{ed} and \cite[\S 2]{Jaca}. The present definition of an exact transformation~$\fte$ combines the definitions found in \cite{Lenoir1986,scott} with the orthogonal projection $b$ as used in \cite{dubois}. 

\medskip

Let us first point out that for a sufficiently small mesh size $h$, a mesh element~$T$ cannot have $d+1$ vertices on the boundary $\Gamma$, due to the quasi uniform assumption imposed on the mesh $\tauh$. 

\begin{definition}
\label{appdef:sigma-lambdaetoile-haty}
    Let $T\in\tauh$ be a non-internal element (having at least 2 vertices on the boundary). Denote~$v_i = \ft(\hatv_i)$ as its vertices, where $\hatv_i$ are the vertices of~$\tref$. We define $\varepsilon_i=1$  if $v_i\in  \Gamma$ and~$\varepsilon_i=0$ otherwise.
    To $ \hatx\in \tref$ is associated its barycentric coordinates $\lambda_i$ associated to the vertices $\hatv_i$ of~$\tref$ and $\lambdaetoile (\hatx):= \sum_{i=1}^{d+1} \varepsilon_i \lambda_i$ (shortly denoted by $\lambdaetoile$). Finally, we define  $\hat{\sigma} : = \{ \hatx \in \tref; \lambdaetoile(\hatx) = 0 \}$ and the function $
    \haty := \dfrac{1}{\lambdaetoile}\sum_{i=1}^{d+1} \varepsilon_i \lambda_i\hatv_i\in\tref$, which is well defined on $\trefminissigma$.
\end{definition}

Consider a non-internal mesh element $T \in \tauh$ and the affine transformation $\ft$. In the two dimensional case, $\ft(\hatsigma)$ will consist of the only vertex of $T$ that is not on the boundary $\Gamma$. 
In the three dimensional case, the tetrahedral $T$ either has 2 or 3 vertices on the boundary. In the first case, $\ft(\hatsigma)$ is the edge of $T$ joining its two internal vertices. In the second case, $\ft(\hatsigma)$ is the only internal vertex of $T$. 
\begin{definition}
\label{def:fte-y}
We denote~$\taue$ the mesh consisting of all exact elements $\te=\fte(\tref)$, where~$\fte = \ft$ for all internal elements, as for the case of non-internal elements $\fte$ is given by, 
\begin{equation}
  \label{eq:def-fte}
\fonction{\fte}{\tref  }{\te :=\fte( \tref) }{  \hatx}{\displaystyle  \fte( \hatx) := \left\lbrace 
\begin{array}{ll}
 x & {\rm if } \, \hatx \in \hatsigma, \\
     x+(\lambdaetoile)^{r+2} ( b(y) - y) &  {\rm if } \, \hatx \in \trefminissigma ,
\end{array}
\right.}
\end{equation}
with $x = \ft( \hatx)$ and $y = \ft( \haty)$. 
It has been proveen in \cite{elliott} that $\fte$ is a $\c1$-diffeomorphism and~$C^{r+1}$ regular on $\tref$.  
\end{definition}
\section{The lift transformation}
\label{lift}
We recall that the idea of lifting a function from the discrete domain onto the continuous one was already treated and discussed in many articles dating back to the 1970's, like \cite{nedelec,scott,Lenoir1986,Bernardi1989}. The key ingredient is a well defined lift transformation going from the mesh domain onto the physical domain $\Omega$.

We recall the \textit{lift} transformation $\Ghr$, which was defined in \cite[\S 4]{art-joyce-1}. Following the notations given in Definition \ref{appdef:sigma-lambdaetoile-haty}, the transformation~$\Ghr$, is given piecewise for all $\tr\in \taur$ by,
\begin{equation}
  \label{eq:def-fter}
  {\Ghr}_{|_{\tr}} \!\! := \ftre \circ ({\ftr})^{-1},
  \;
  \ftre( \hatx) \!:=\! \left\lbrace 
\begin{array}{ll}
 \!\!   x & {\rm if } \, \hatx \in \hatsigma \\
  \!\!   x+(\lambdaetoile)^{r+2} ( b(y) - y) &  {\rm if } \, \hatx \in \trefminissigma ,
\end{array}
\right. 
\end{equation}
with $ x := \ftr( \hatx)$ and $y := \ftr( \haty)$, where $\ftr$ is the $\P r$-Lagrange interpolant of $\fte$ defined Section \ref{sec:mesh}. By definition, we have ${\Ghr}_{|_{\tr}} = id_{|_{\tr}}$, for any internal mesh element $\tr~\in~\taur$.

\section{Proof of inequality~\eqref{ineq:Gellu-u-mhell}}%
\label{appendix:proof-mhell-u-Gu}
Keeping in mind the definition of $\projG : \HH1 \omgam \to \Vh$ as the Riesz projection in Definition \ref{def:projections}, we want to prove the estimate \eqref{ineq:Gellu-u-mhell}, given by, 
\begin{equation*}
    \| u - \projG u \|_{\mhell} \le ch^{k+1}.
\end{equation*}

To prove this estimate, we need to recall the definition of an interpolant, which can be found in~\cite{art-joyce-1}. Keeping in mind that $\Vh$ denotes the $\P k$-Lagrangian finite element space, let the $\P r$-Lagrangian interpolation operator be denoted by $\Ihr: \c0 (\omhh) \to \Vh$. 

The lifted finite element space is given by,
$
     \Vhlifte := \{ v_h^\ell, \ v_h \in \Vh \},
$
with its lifted finite element interpolation operator $\Ihlifte$ defined as follows, 
\begin{equation*}
    \fonction{\Ihlifte}{\c0 (\Omega)}{\Vhlifte}{v}{\Ihlifte (v) := \big( \Ihr (v \circ \Ghr) \big)^\ell.} 
\end{equation*}
Notice that, since $\Omega$ is an open subset of $\R^2$ or $\R^3$, then we have the following Sobolev injection~$\Hexpo{k+1}(\Omega) \hookrightarrow \c0 (\Omega)$. 
Thus, any function $w \in \Hexpo{k+1}(\Omega)$ may be associated to an interpolation element~$\Ihlifte(w) \in \Vhlifte$. 

We recall its associated interpolation inequality, which is proved in \cite[Proposition~6]{art-joyce-1}.
\begin{proposition}
Let $v \in \Hk1 \omgam$ and $2 \le m \le k+1$. The operator $\Ihlifte$ satisfies the interpolation inequality with a constant $c>0$ as follows,
\begin{equation}
\label{interpolation-ineq}
    \|v-\Ihlifte v\|_{\L2\omgam} + h \|v-\Ihlifte v\|_{\HH1\omgam} \le c h^{m} \|v\|_{\Hexpo{m} \omgam}. 
\end{equation}
\end{proposition}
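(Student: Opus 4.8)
The plan is to transport the estimate to the mesh domain~$\omhr$, where it reduces to the classical Lagrange interpolation estimate, and then to transfer it back through the lift. Set $w:=v\circ\Ghr$ (well defined since $m\ge2$ ensures $v\in\mathcal{C}^0(\overline{\Omega})$ by Sobolev embedding), so that, by the definition of~$\Ihlifte$ recalled above, $\Ihlifte v=(\Ihr w)^\ell$ and hence $v-\Ihlifte v=(w-\Ihr w)^\ell$. The first ingredient is that, uniformly in~$h$, the lift is a near-isometry between the Sobolev spaces on~$(\omhr,\ghr)$ and on~$\omgam$: combining the change-of-variable identities \eqref{pass_fct_scalaire_volume}, \eqref{pass_grad_volume}, \eqref{pass_fct_scalaire_surface} and \eqref{pass_grad_surface} with the geometric bounds \eqref{ineq:Gh-Id_Jh-1}, \eqref{ineq:Ghr-Id_1/Jh-1} and \eqref{ineq:AhJh}, and using the trace property of Definition~\ref{def:liftvolume}, one gets, for $h$ small enough and every $u_h\in\HH1(\omhr,\ghr)$, that $\|u_h^\ell\|_{\L2\omgam}$ and $\|u_h\|_{\L2(\omhr,\ghr)}$ (respectively $\|u_h^\ell\|_{\HH1\omgam}$ and $\|u_h\|_{\HH1(\omhr,\ghr)}$) are equivalent with constants of the form $1+c\,h^{r}$. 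It therefore suffices to prove the discrete estimate $\|w-\Ihr w\|_{\L2(\omhr,\ghr)}+h\,\|w-\Ihr w\|_{\HH1(\omhr,\ghr)}\le c\,h^{m}\,\|v\|_{\Hexpo{m}\omgam}$.

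For the volume contribution I would split the integrals over the curved elements $\tr=\ftr(\tref)$ and pull them back to~$\tref$, where $\Ihr w$ becomes the fixed $\mathbb{P}^k$ Lagrange interpolant of $w\circ\ftr$ (note that $m\le k+1$ gives $\mathbb{P}^{m-1}\subset\mathbb{P}^k$, which is what the argument needs). Applying the Bramble--Hilbert lemma on~$\tref$ to bound the interpolation error in $\Hexpo{0}(\tref)$ and $\Hexpo{1}(\tref)$ by the $\Hexpo{m}(\tref)$-seminorm, then scaling back with the bounds on $\mathrm{D}\ftr$, $\mathrm{D}(\ftr)^{-1}$ and the Jacobian of~$\ftr$ (those of the $\mathcal{C}^{r+1}$-diffeomorphism obtained by $\mathbb{P}^r$ interpolation, see \cite{PHcia,ciaravtransf}), and summing over the elements, yields the classical estimate $\|w-\Ihr w\|_{\L2(\omhr)}+h\,\|w-\Ihr w\|_{\HH1(\omhr)}\le c\,h^{m}\,\|w\|_{\Hexpo{m}(\omhr)}$ (broken $\Hexpo{m}$ norm). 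It then remains to bound $\|w\|_{\Hexpo{m}(\omhr)}=\|v\circ\Ghr\|_{\Hexpo{m}(\omhr)}$ by $c\,\|v\|_{\Hexpo{m}(\Omega)}$, which is the chain rule on each element: since ${\Ghr}_{|_{\tr}}$ is a $\mathcal{C}^{r+1}$-diffeomorphism onto~$\te$ \cite{art-joyce-1} with $\|\mathrm{D}\Ghr-\I\|\le c\,h^{r}$ and $|\Jh-1|\le c\,h^{r}$ by \eqref{ineq:Gh-Id_Jh-1}, the derivatives of~$\Ghr$ of the relevant order contribute only bounded factors, the needed regularity of~$\Ghr$ being ensured by the standing $\mathcal{C}^{r+2}$ / $\mathcal{C}^{k+1}$ assumptions on~$\Omega$.

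For the boundary contribution I would invoke the trace property of Definition~\ref{def:liftvolume}, which gives $\mathrm{Tr}(\Ihlifte v)=\big(\mathrm{Tr}(\Ihr w)\big)^\ell$; since ${\Ghr}_{|_{\ghr}}=b$ one has $\mathrm{Tr}(w)=v\circ b$, and, Lagrange interpolation commuting with restriction to faces, $\mathrm{Tr}(\Ihr w)$ is precisely the $\mathbb{P}^k$ surface Lagrange interpolant on~$\ghr$ of $v\circ b$. Hence $\|v-\Ihlifte v\|_{\L2(\Gamma)}+h\,\|v-\Ihlifte v\|_{\HH1(\Gamma)}$ is a lifted surface interpolation error relative to the orthogonal projection~$b$, and is controlled by $c\,h^{m}\,\|v\|_{\Hexpo{m}(\Gamma)}$ through the surface interpolation estimates of Demlow~\cite{D1}, themselves resting on the sharp bounds \eqref{ineq:AhJh}. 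Adding the volume and boundary parts and recalling $\|v\|_{\Hexpo{m}\omgam}^2=\|v\|_{\Hexpo{m}(\Omega)}^2+\|v\|_{\Hexpo{m}(\Gamma)}^2$ then gives the announced inequality.

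The genuinely technical step, and the main obstacle, is the uniform-in-$h$ chain-rule bound $\|v\circ\Ghr\|_{\Hexpo{m}(\omhr)}\le c\,\|v\|_{\Hexpo{m}(\Omega)}$: it requires controlling all the relevant derivatives of the piecewise transformation~$\Ghr$ directly from the explicit formula~\eqref{eq:def-fter}, keeping track of the quasi-uniformity of the affine mesh and of the exact cancellation of the powers of~$h$, and checking that the regularity of~$\Ghr$ (dictated by the mesh order~$r$) is compatible with the order~$m\le k+1$ at which the interpolation is performed. Everything else amounts to a careful assembling of standard interpolation and change-of-variable facts; along the way one only has to carry the harmless $1+c\,h^{r}$ factors introduced by the lift, which merely enter the final constant~$c$.
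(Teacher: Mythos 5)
First, a point of reference: the paper does not prove this proposition at all — it is recalled verbatim from \cite[Proposition~6]{art-joyce-1}, so there is no in-paper argument to compare yours against. Your outline is nevertheless the canonical route for such a result (transport to $\omhr$ by the lift, classical curved-element Lagrange interpolation on the reference simplex, surface interpolation \`a la Demlow for the boundary part, and norm equivalence with $1+ch^{r}$ constants to come back), and it is consistent with how the cited reference proceeds. The reductions $v-\Ihlifte v=(w-\Ihr w)^\ell$ with $w=v\circ\Ghr$, the use of the trace property to identify the boundary error with a surface interpolation error relative to $b$, and the observation that $m\ge 2$ secures the continuity needed to define the Lagrange interpolant, are all correct.

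The one genuine gap is the step you yourself flag and then do not carry out: the uniform-in-$h$ bound $\|v\circ\Ghr\|_{\Hexpo{m}(\omhr)}\le c\,\|v\|_{\Hexpo{m}(\Omega)}$ (broken norm), together with the analogous control of the higher derivatives of $\ftr$ and $(\ftr)^{-1}$ needed in the scaling argument. The estimates available in the present paper, namely \eqref{ineq:Gh-Id_Jh-1} and \eqref{ineq:Ghr-Id_1/Jh-1}, control only $\diff\Ghr-\I$ and the Jacobian; for $m$ up to $k+1$ the Fa\`a di Bruno expansion of $\diff^{j}(v\circ\Ghr)$, $2\le j\le m$, requires uniform bounds $\|\diff^{j}\Ghr\|_{\Linf(\tr)}\le c$ which must be extracted from the explicit piecewise formula \eqref{eq:def-fter} (in particular from the behaviour of $(\lambdaetoile)^{r+2}(b(y)-y)$ near $\hatsigma$, where $\haty$ degenerates) together with the quasi-uniformity of $\tauh$. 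This is precisely where the substance of the proof lies in \cite{art-joyce-1}, and asserting that these derivatives "contribute only bounded factors" is not a proof of it. A secondary, smaller omission: the surface part is delegated to Demlow's estimates for the interpolant relative to $b$, which is legitimate, but one should check that the nodes of $\ghr$ and the regularity $\c{r+2}$ of $\Gamma$ meet the hypotheses of \cite{D1} for the degree-$k$ surface interpolant; this is routine but should be said. With those two points supplied, your argument would be complete and would coincide with the proof of the cited reference.
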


Secondly, we define the functional~$F_h$ on $\HH1 \omgam$ as follow,
$$
\fonction{F_h }{\HH1 \omgam  }{\R}{v}{\displaystyle F_h(v)= (a-\ahell)(u-\projG u, v),} 
$$
where a is the continuous bilinear forms defined on $[\HH1\omgam]^2$ in Section \ref{sec:notations_def} and $\ahell$ is the lift of the discrete bilinear form defined on $[\Vhlifte]^2$ in Section \ref{sec:FEM}.
Notice that for $v \in \Vhlifte,$ $F_h(v)=a(u-\projG u, v)$, by Definition \ref{def:projections} of $\projG$.

In order to prove the inequality \eqref{ineq:Gellu-u-mhell}, we proceed by bounding $F_h$ as follows in Lemma \ref{lem:F-inequalities}, with the help of the inequality \eqref{interpolation-ineq}
\begin{lem}
\label{lem:F-inequalities}
    Let $v\in \HH1 \omgam$, there exists $c>0$ such that,
    \begin{equation}
    \label{ineq:F_h(1)}
        |F_h(v)|  \le c h^{k+r} \|v\|_{\HH1 \omgam}.
    \end{equation}
\end{lem}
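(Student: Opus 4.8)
The plan is to apply the geometric error estimate \eqref{ineq:a-ahell} directly to the pair $(u-\projG u,\,v)$. First I would observe that although $\ahell$ was introduced on $\Vhlifte\times\Vhlifte$, it is defined by an explicit integral expression whose coefficients ($\G$, $\Ahell$, $1/\Jhlifte$, $1/\Jblifte$) are bounded on $\Omega$ and on $\Gamma$; hence $\ahell$, and so $F_h$, extends to $\HH1\omgam\times\HH1\omgam$, and the argument proving \eqref{ineq:a-ahell} — which only relies on \eqref{ineq:Ghr-Id_1/Jh-1}, \eqref{ineq:AhJh} and the vanishing property \eqref{eq:Jh-1=0=Dgh-Id} — applies verbatim to arbitrary elements of $\HH1\omgam$. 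Taking $w=u-\projG u$ then yields
\[
|F_h(v)| \le c\left( h^r \|\na(u-\projG u)\|_{\L2(B_h^\ell)}\,\|\na v\|_{\L2(B_h^\ell)} + h^{r+1}\|u-\projG u\|_{\HH1(\Gamma)}\,\|v\|_{\HH1(\Gamma)}\right).
\]

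Next I would bound the two factors involving $v$ crudely: since $B_h^\ell\subset\Omega$ and the trace map is continuous from $\HH1\omgam$ onto $\HH1(\Gamma)$, both $\|\na v\|_{\L2(B_h^\ell)}$ and $\|v\|_{\HH1(\Gamma)}$ are controlled by $\|v\|_{\HH1\omgam}$, which produces the desired factor on the right-hand side. For the factors carrying $u-\projG u$ I would use the Riesz projection estimate \eqref{ineq:Gellu-u-ahell} together with Corollary \ref{coro:norm-equiv-with-h-r} (equivalence of $\|\cdot\|_{\ahell}$ and $\|\cdot\|_a$) and the coercivity and continuity of $a$ (equivalence of $\|\cdot\|_a$ and $\|\cdot\|_{\HH1\omgam}$) to get $\|u-\projG u\|_{\HH1\omgam}\le c\,\|u-\projG u\|_{\ahell}\le c\,h^k\|u\|_{\Hk1\omgam}$; in particular $\|\na(u-\projG u)\|_{\L2(B_h^\ell)}\le c\,h^k$ and $\|u-\projG u\|_{\HH1(\Gamma)}\le c\,h^k$, the constant absorbing the finite quantity $\|u\|_{\Hk1\omgam}$ (finite since $\Omega$ is $\c{k+1}$). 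Substituting gives $|F_h(v)|\le c\,(h^{k+r}+h^{k+r+1})\,\|v\|_{\HH1\omgam}\le c\,h^{k+r}\|v\|_{\HH1\omgam}$ for $h$ small enough, which is \eqref{ineq:F_h(1)}.

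The only point requiring care is the first one: checking that \eqref{ineq:a-ahell} may legitimately be invoked on the non-discrete function $u-\projG u$ and on a general test function $v\in\HH1\omgam$ — once this is settled, the remainder is just triangle and trace inequalities together with the already established estimate \eqref{ineq:Gellu-u-ahell}. I would also stress that the sharper strip estimate of Corollary \ref{coro:h-1/2} is \emph{not} needed here (and could not be used, since $u-\projG u\notin\H2(\Omega)$): the crude bound $\|\cdot\|_{\L2(B_h^\ell)}\le\|\cdot\|_{\L2(\Omega)}$ already suffices to reach the exponent $k+r$, and any additional gain coming from the thin strip $B_h^\ell$ would only improve, not threaten, the stated inequality.
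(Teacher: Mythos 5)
Your proposal is correct and follows essentially the same route as the paper's own proof: apply the geometric error estimate \eqref{ineq:a-ahell} to the pair $(u-\projG u, v)$, bound the $B_h^\ell$ and trace terms crudely by $\|\cdot\|_{\HH1\omgam}$, and conclude with the Riesz projection estimate \eqref{ineq:Gellu-u-ahell}. Your explicit justification that \eqref{ineq:a-ahell} extends beyond $\Vhlifte\times\Vhlifte$ is a point the paper passes over silently, and is a welcome clarification rather than a deviation.
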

\begin{proof}
 Denote $e:=u-\projG u.$ Let $v\in \HH1 \omgam,$ using Inequality \eqref{ineq:a-ahell} we have,
\begin{multline*}
    |F_h(v)|  = |a-a_h^\ell|(e,v)  \le ch^r \|e\|_{\HH1 \omgam} \|v\|_{\HH1 \omgam} + ch^{r+1} \|e\|_{\HH1 \omgam} \|v\|_{\HH1 \omgam} \\
     \le ch^r (\|e\|_{\HH1 \omgam} + ch \|e\|_{\HH1 \omgam} ) \|v\|_{\HH1 \omgam}.
\end{multline*}
Then applying the $\HH1$ error inequality \eqref{ineq:Gellu-u-ahell}, we get, 
$$
    |F_h(v)|  \le ch^r (h^k + h^{k+1}) \|v\|_{\HH1 \omgam}  \le c h^{k+r} \|v\|_{\HH1 \omgam}.
$$
\end{proof}
\begin{proof}[Proof of Inequality \eqref{ineq:Gellu-u-mhell}]
To prove estimate \eqref{ineq:Gellu-u-mhell}, we use an Aubin-Nitche argument. Since $e \in \L2 (\Omega),$ then there exists a unique solution $z_e \in \H2 \omgam$ solution of the weak formulation~\eqref{fv_faible} with source terms satisfying,
\begin{equation}
\label{ineq:z_e-e}
    \|z_e\|_{\H2 \omgam} \le  c\| e\|_{\L2 (\Omega)}.
\end{equation}
We have, using the continuity of the bilinear form $a$,
\begin{multline*}
    \|u-\projG u \|^2_{\L2 (\Omega)}  =  \| e \|^2_{\L2 (\Omega)}  = a(e,z_e) 
     = a(e,z_e-\Ihlifte z_e) +a(e,\Ihlifte z_e)\\
      \le c_{cont} \|e\|_{\HH1 \omgam} \|z_e - \Ihlifte z_e\|_{\HH1 \omgam} + |F_h(\Ihlifte z_e)|.
\end{multline*} 
We apply the inequalities \eqref{ineq:Gellu-u-ahell}, \eqref{interpolation-ineq} for $z_e\in \H2\omgam$ and \eqref{ineq:F_h(1)} since $\Ihlifte z_e \in \Vhlifte$, as follows,
\begin{align*}
    \|u-\projG u \|^2_{\L2 (\Omega)} 
     & \le c (h^{k}) h \|z_e \|_{\H2 \omgam} + ch^{k+r} \|z_e\|_{\HH1 \omgam} \le c h^{k+1} \|z_e\|_{\H2\omgam} .
\end{align*} 
By applying  \eqref{ineq:z_e-e} and dividing by $  \|u-\projG u \|_{\L2 (\Omega)} $, we obtain,
\begin{equation*}
    \|u-\projG u \|_{\L2 (\Omega)} 
\le c h^{k+1} .
\end{equation*}
By the equivalence between the norms the norms $ \|\cdot\|_{m}=\|\cdot\|_{\L2(\Omega)}$ and $\|\cdot\|_{\mhell}$ in Corollary \ref{coro:norm-equiv-with-h-r}, we obtain \eqref{ineq:Gellu-u-mhell}.

\end{proof}

\bibliographystyle{abbrv}
\bibliography{biblio}

\end{document}